\theoremstyle{definition}
\newtheorem{thm}{Theorem}[section]
\newtheorem{lem}[thm]{Lemma}
\newtheorem{prop}[thm]{Proposition}
\newtheorem{cor}[thm]{Corollary}
\newtheorem{obs}[thm]{Remark}
\newtheorem{defn}[thm]{Definition}
\newtheorem{eg}[thm]{Example}
\newcommand{\bbZ}{\mathbb{Z}}
\newcommand{\bbR}{\mathbb{R}}
\newcommand{\bbF}{\mathbb{F}}
\newcommand{\bbQ}{\mathbb{Q}}
\newcommand{\bbC}{\mathbb{C}}
\newcommand{\OO}{\mathcal{O}}
\newcommand{\Gal}{\mathrm{Gal}}
\newcommand{\GL}{\mathrm{GL}}
\newcommand{\GSp}{\mathrm{GSp}}
\newcommand{\Sp}{\mathrm{Sp}}
\newcommand{\Frob}{\mathrm{Frob}}
\newcommand{\Hom}{\mathrm{Hom}}
\newcommand{\End}{\mathrm{End}}
\newcommand{\p}{\mathfrak{p}}
\newcommand{\q}{\mathfrak{q}}
\newcommand{\Q}{\mathbb{Q}}
\setlist[enumerate]{itemsep=0mm,topsep=0pt}
\begin{document}

\title{Potential Automorphy of K3 Surfaces with Large Picard Rank}
\author{Chao Gu}
\date{}
\maketitle
\abstract
The first part of this paper studied $\GSp_4$-type abelian varieties and the corresponding compatible systems of $\GSp_4$ representations. Techniques in \cite{BCGP} are applied to show that one can prove the potential modularity of these abelian varieties and compatible systems under some conditions that guarantee a sufficient amount of good primes. Then, in the second part, we use the potential modularity theorems to prove that K3 surfaces over totally real field $F$ with Picard rank $\ge17$ are potentially modular. 
\tableofcontents
\newpage

\section{Backgrounds}
\subsection{Introduction and Notations}

Let $F$ be a totally real field and $X/F$ be an algebraic $K3$ surface of geometric Picard rank $\rho$. Then the singular cohomology $H=H^2(X,\bbQ)$ with its Hodge structure splits as a direct sum 
\[H=T\oplus NS(X)\]
consisting of the N\'eron-Severi group with rational coefficients and the transcendental part $T$ of rank $22- \rho$ (\cite{Mor}), and associated to it is a corresponding
compatible system of Galois representations
$$\rho_p: G_F \rightarrow \mathrm{GO}_{22- \rho}(\bbQ_p)$$
Let  $\zeta_X(s)$ denote the Hasse-Weil zeta function
of $X$, then there is an equality
$$\frac{1}{\zeta_X(s) } =  \zeta(s) \zeta(s-2) L(T,s-1) L(NS,s-1)$$
The Hasse-Weil conjecture states that $\zeta_X(s)$ should extend to a meromorphic function for all $s \in \bbC$ and satisfy a functional equation. It is known that this holds for the $L$-function of the Artin motive $NS(X)$ due to Artin-Braeur theorem\cite{brauer}. So the conjecture would follow from the conjecture that the motive $T$ (or the associated compatible system $\{\rho_p\}$) is potentially automorphic. That is, there is a finite extension $F'/F$ such that the restrictions
of the motive become automorphic. The main theorem of this thesis proves that this is true when the geometric Picard rank is larger than or equal to 17. 
\begin{thm}\label{main} Suppose that the geometric Picard rank of $X$ is $\rho\ge 17$.
Then $T$ is potentially modular, and the Hasse-Weil conjecture 
holds for $X$.
\end{thm}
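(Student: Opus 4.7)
The plan is to do a case analysis on the rank $r = 22 - \rho \in \{0,1,2,3,4,5\}$ of the transcendental lattice $T$, and in each case reduce the orthogonal motive to a motive of a more classical type via the exceptional low-rank isomorphisms $\mathrm{SO}(3) \cong \mathrm{PGL}_2$, $\mathrm{SO}(4) \sim \mathrm{SL}_2 \times \mathrm{SL}_2$, and $\mathrm{Spin}(5) \cong \mathrm{Sp}_4$, realized concretely by the Kuga--Satake construction applied to $T$ (rather than to the whole of $H^2$). Since the Clifford algebra of a rank-$r$ quadratic space has dimension $2^r$, the transcendental Kuga--Satake varieties land in a manageable range (abelian surfaces up to abelian fourfolds), and the resulting Galois representations are of $\GL_2$-, $\GL_2\times\GL_2$-, or $\GSp_4$-type depending on $r$.

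For $\rho \geq 19$ (so $r \leq 3$) the compatible system $\{\rho_p\}$ is handled by classical inputs: for $r \leq 2$, $T$ is generated by Hodge or CM classes and potential automorphy is a statement about Hecke characters; for $r=3$, the Kuga--Satake abelian surface carries quaternionic multiplication, and $T$ is the symmetric square of its $\GL_2$-type Galois representation, so potential modularity follows from $\GL_2$ potential modularity theorems (Taylor et al.). For $\rho = 18$ ($r=4$), the Kuga--Satake variety splits under the $\mathrm{SO}(4)$ isogeny so that $T$ appears as the tensor product of two $\GL_2$-type compatible systems, and potential automorphy again reduces to the $\GL_2$ case, invoking Arthur's classification or direct tensor-product automorphy.

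The decisive case is $\rho = 17$, where $r = 5$: via $\mathrm{Spin}(5) \cong \mathrm{Sp}_4$, the Kuga--Satake construction produces an abelian fourfold $A/F''$ (over some finite extension) whose compatible system of Galois representations is of $\GSp_4$-type, and $T$ is recovered as $\wedge^2_0$ of the standard $4$-dimensional representation. This is precisely the setting of Part~1 of the paper, so potential modularity of $T$ reduces to checking that $A$ satisfies the hypotheses of the Part~1 $\GSp_4$ potential automorphy theorem. The main obstacle will be verifying these hypotheses—in particular, showing that $A$ has enough primes of good (ordinary, or Steinberg) reduction with the required local conditions. This should follow from the fact that $X$ has good reduction outside a finite set of primes, that good reduction of $X$ implies good reduction of the transcendental Kuga--Satake variety, and from density results for ordinary primes in compatible systems that are already exploited in \cite{BCGP}; some care is needed to pass from the fixed field of the algebraic monodromy of $T$ back to a totally real extension of $F$. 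Once potential modularity is established, the Hasse-Weil conjecture for $X$ follows by combining the factorization of $\zeta_X(s)$ given in the introduction with the Artin--Brauer theorem applied to $NS(X)$ and Brauer induction applied to $L(T,s)$ along the solvable tower witnessing automorphy over $F'/F$.
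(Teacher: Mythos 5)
Your high-level strategy matches the paper's: reduce $T$ via the exceptional isogenies $\mathrm{GSp}_4 \to \mathrm{GO}_5$, $\GL_2 \times \GL_2 \to \mathrm{GO}_4$, $\GL_2 \to \mathrm{GO}_3$ (realized via Kuga--Satake/motivic lifting \`a la Patrikis), handle $\rho = 17$ through the $\GSp_4$ potential modularity theorem of Part 1, handle $\rho = 18$ through a pair of $\GL_2$-type systems plus tensor-product automorphy, and dispose of $\rho \ge 19$ by more classical means. For $\rho = 19, 20$ the paper actually just observes that $\mathcal{S}$ is regular and applies \cite{blggt} Theorem A directly, rather than routing through symmetric squares as you do; both work, but the paper's route is shorter and you should be aware it's available.

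However, you have flagged ``verifying the hypotheses'' as ``the main obstacle'' and then left it essentially unaddressed, and this is where the real mathematical content of the section lives. Three things need to be proved and your sketch does not give a mechanism for any of them. First, you need $T_p$ to be \emph{absolutely irreducible for every $p$} with monodromy containing $\mathfrak{so}_5$ (resp.\ $\mathfrak{so}_4$); the paper spends Lemmas~\ref{noone}--\ref{foroneforall} and the subsequent theorem ruling out, for a putative bad prime, decompositions $A_p \oplus B_p$ with $\dim A_p = 2$, $\dim B_p = 3$, via a Newton-over-Hodge argument combined with the Fontaine--Mazur conjecture for potentially abelian $2$-dimensional representations and finiteness of the relevant CM forms. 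This is not automatic from ``$T$ is transcendental.'' Second, the lifted compatible system $\mathcal{R}$ a priori has coefficients in some number field $M \ne \Q$; the crucial step (Theorem~\ref{prep}(2)) is to show that after a finite $(\bbZ/2\bbZ)^m$-extension $E/F$, the $\Gal(M/\Q)$-conjugates of $\mathcal{R}$ all agree, so $\mathcal{R}|_{G_E}$ has $\Q$-coefficients and hence (by \cite{BCGP} Lemma~10.3.2) comes from an abelian surface or fake abelian surface over $E$ \emph{with descent data to} $F$. You wave at ``passing from the fixed field of the algebraic monodromy back to a totally real extension of $F$,'' but the descent-data formalism is precisely what makes the Part~1 machinery apply, and is the analogue of Ribet's $\Q$-curves; omitting it leaves the Moret--Bailly argument of \S6 without a base field. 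Third, the strongly positive ordinariness with distinguished unit eigenvalues (Lemma~\ref{ordinary}) is established not from good reduction of $X$ but from the large-monodromy property together with the boundedness $|t_v| \le 5p$ and the descent-equivariance relation $a_v^\sigma = \chi(\sigma) a_v$; your appeal to ``density results for ordinary primes in compatible systems'' does not produce the $\lambda$-\emph{distinguished} condition, which is essential for the Serre--Tate step. Without these three verifications the reduction to Theorem~\ref{potmod} does not go through.
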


\begin{eg} Suppose that $A/\Q$ is an abelian variety. Then associated to $A$
is a $K3$ surface $X$ known as the Kummer surface of $A$; it is a resolution of $A/[-1]$,
and it has geometric Picard rank $\ge 17$ (more precisely, $16 + \dim NS(A)$). The main theorem in this case then follows
from the main theorem of \cite{BCGP}.
\end{eg}

One key point is that are many other examples of $X$
with Picard rank $\ge 17$ which are \emph{not} of this form. Such $X$ \emph{do} however
turn out to be associated with either abelian surfaces $A$ or \emph{fake} abelian surfaces $A$.
Moreover, the varieties $A$ in question need not be defined over $F$, but rather are defined
over some finite Galois extensions $E/F$ and such that $A$ is isogenous to $A^{\sigma}$ 
for any $\sigma \in \Gal(E/F)$. These are analogues  of the so-called $\Q$-curves
for $\GL_2$ considered by Ribet \cite{Ribet}. The next step is to prove that all such varieties
(with motivic descent data to $F$) are potentially modular.

In the rest of this chapter, we will introduce the formal definitions of compatible systems of Galois representations, especially the $\GSp_4$ representations which will be our main interest of study in this paper. 

In chapter 2 we introduce the possible abelian varieties that give compatible systems of $\GSp_4$ representations. We call these abelian varieties of $\GSp_4$-type, and we say that a compatible system is \textit{geometric} if it comes from such an abelian variety. 

In chapter 3 we discuss the large image property of geometric compatible systems of $\GSp_4$ representations and proved that under the assumption of the large image property there are an ample source of \textit{good} primes $l$ such that the abelian variety satisfies some ordinariness condition for at least one prime $\lambda$ above $l$. 

In chapter 4 we apply the Serre-Tate theory to construct a desired lift of $\bar{A}/\mathbb{F}_l$ for some good prime $l$, and in chapter 6 we first use the theorem of Moret-Bailly to construct an abelian variety $\mathcal{A}$ together with two primes $\p$ and $\q$ such that $\mathcal{A}[\p]$ coincides with that of the original abelian variety $A$, and $\mathcal{A}[\q]$ is isomorphic to an induced representation. This process requires smoothness and connectedness of the moduli space of the $\GSp_4$ type abelian varieties, which is resolved in chapter 5 through a construction of Shimura varieties. Finally, we apply the modularity lifting theorems to show the potential modularity of the constructed abelian variety $\mathcal{A}$, and hence the original abelian variety $A$ and the compatible system $\mathcal{R}_A$. 

In chapter 7, we apply the motivic lifting results from \cite{Patrikis} to construct a compatible system $\mathcal{R}$ of Galois representations from the transcendental motive $T$, and we show that potential modularity theorem proved in chapter 6 applies to $\mathcal{R}$, hence proving the main result. 

Note that our arguments do not apply to general compatible families of dimension $5$ with Hodge-Tate weights $[-1,0,0,0,1]$, even those assumed to arise from geometry. Not only do we need purity, but we also use the fact that the Tate conjecture is known for $K3$ surfaces \cite{Andre}. More generally, we exploit the existence of the Kuga-Satake construction in order to find motivic lifts of the representations $\rho_p$ to $\GSp_4(\bar{\bbQ}_p)$. The required ingredients we use can all be found in \cite{Patrikis}, though we also prove that the representations $\rho_p$ are absolutely irreducible in general which requires more work but also strongly relies on the assumption of large Picard rank.

To prove potential modularity (\S 6), we naturally use the main modularity lifting theorem of \cite[\S7]{BCGP} as well as the methods of \cite[\S9]{BCGP}. One notable difference with \cite{BCGP} is that for general $\GSp_4$ type abelian varieties we need to assume the base field is $\Q$ rather than a totally real field, or at least the abelian variety should have \textit{descent data} to $\bbQ$. In particular, we do not prove potential modularity for all abelian varieties $A$ of $\GSp_4$-type, that is, abelian varieties $A$ such that $\End(A) \otimes \Q$ contains a totally real field $K$ with $\dim(A) = 2[K:\Q]$ (\S2). The reason that we are not able to do this is that we are unable to prove that such abelian varieties admit enough primes $p$ splitting completely in $F$ such that $A$ is "distinguished ordinary", that is, primes $p$ for which $A$ is ordinary at all $v\mid p$ and the unit crystalline Frobenius eigenvalues are distinct modulo $p$ (\S3). As a result, more assumptions, including the ordinariness assumption (Definition \ref{ord}), and the large image assumption (Definition \ref{large}), need to be added to apply the potential modularity theorems. 

This paper will mainly use the notations as in \cite{BCGP}: For a perfect field $K$, we let $\OO_K$ be the ring of integers, $\bar{K}$ denote an algebraic closure of $K$ and $G_K$ the absolute Galois group $\Gal(\bar{K}/K)$. For each prime $p$ not equal to the characteristic of $K$, we let $\epsilon_p$ denote the $p$-adic cyclotomic character and $\bar{\epsilon}_p$ its reduction modulo $p$, and will commonly omit the $p$ and write as $\epsilon$ and $\bar{\epsilon}$ for simplicity. 

Let $K/\bbQ$ be a finite extension. If $v$ is a finite place of $K$ we write $K_v$ for the corresponding local field and $K_{(v)}$ for its residue field, and $\Frob_v:=\Frob_{K_v}$ be the geometric Frobenius element in $G_{K_v}$. If $v$ is a real place of $K$, then we let $[c_v]$ denote the conjugacy class in $G_K$ consisting of complex conjugations
associated to $v$.

\subsection{Compatible System of $\GSp_4$ representations}

\begin{defn} Let $F$ be a number field. By a rank $n$ weakly compatible system of $l$-adic representations $\mathcal{R}$ of $G_F$ defined over $M$ we mean a 5-tuple
\[\big(M,S,Q_v(X),\{\rho_\lambda\},\{H_\tau\}\big)\] where
\begin{enumerate}
    \item [(1)] $M$ is a number field;
    \item [(2)] $S$ is a finite set of primes of $F$;
    \item [(3)] For each prime $v\notin S$ of $F$, $Q_v(X)$ is a monic degree $n$ polynomial in $M[X]$;
    \item [(4)] For each prime $\lambda\mid l$ of $M$, we have
    \[\rho_\lambda:G_F\to \GL_n(\bar{M}_\lambda)\]
    a continuous semi-simple representation such that \begin{itemize}
        \item if $v\notin S$ and $v\nmid l$ is a prime of $F$ then $\rho_\lambda$ is unramified at $v$ and $\rho_\lambda(\Frob_v)$ has characteristic polynomial $Q_v(X)$;
        \item if $v\mid l$, then $\rho_{\lambda}|_{G_{F_v}}$ is de Rham, and in the case $v\notin S$ crystalline; 
    \end{itemize}
    \item [(5)] For $\tau:F\hookrightarrow \bar{M}$, $H_\tau$ is a multiset of $n$ integers such that for any $\bar{M}\hookrightarrow \bar{M}_\lambda$ over $M$ we have $\mathrm{HT}_\tau(\rho_{\lambda})=H_\tau$.
\end{enumerate}

We will call $\mathcal{R}$ strictly compatible if for each finite place $v$ of $F$ there is a Weil–Deligne representation $\mathrm{WD}_v(\mathcal{R})$ of the Weil group $W_{F_v}$ over $\bar{M}$ such that
for each place $\lambda$ of $M$ and every $M$-linear embedding $\iota:\bar{M}\hookrightarrow \bar{M}_\lambda$ we have
$$\iota \mathrm{WD}_v(\mathcal{R}) \cong \mathrm{WD}(\rho_\lambda|_{G_{F_v}})^{F-ss}.$$

We say that the compatible system $\mathcal{R}$ is pure of weight $w\in \bbZ$ if for a density one set of primes $v$ of $F$ with residue character $p$, each root of $\alpha$ of $Q_v(X)$ in $\bar{M}$ satisfies $|\iota(\alpha)|^2=p^w$ for all embeddings $\iota:\bar{M}\hookrightarrow \bbC$.
\end{defn}

In particular, we say $\mathcal{R}$ is a (weakly) compatible system of $\GSp_4$ representation if $n=4$ and $\rho_\lambda:G_F\to\GSp_4(\bar{K}_\lambda)$, here $$\GSp_4(K)=\{g\in \GL_4(K):gJg^t=\nu(g)J\},$$ where $\nu(g):\GSp_4\to \mathbb{G}_m$ is the similitude character, and $$J=\left(\begin{matrix}0&0&0&1\\0&0&1&0\\0&-1&0&0\\-1&0&0&0\end{matrix}\right).$$

Compatible systems of representations naturally arise from abelian varieties, as we have the following result: 

\begin{prop}(Theorem 2.8.1, \cite{BCGP}) If $A$ is an abelian variety over a number field $F$, then for
each $0\le i\le 2 \,\mathrm{dim}X$, the $l$-adic cohomology groups $H^i(A_{\bar{F}},\bar{\bbQ}_l)$ form a strictly
compatible system which is pure of weight $i$ and which is defined over $\bbQ$.
\end{prop}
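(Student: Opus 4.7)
The plan is to reduce to the case $i=1$ by exterior powers and then verify each clause of the definition. Since $H^i(A_{\bar F},\bar{\bbQ}_\ell)\cong \Lambda^i H^1(A_{\bar F},\bar{\bbQ}_\ell)$ and every feature required by the definition — unramified characteristic polynomials, strict compatibility of Weil--Deligne parameters, the de Rham/crystalline conditions, the Hodge--Tate multiset, purity, and (in characteristic zero) semisimplicity — is preserved under exterior powers, it is enough to produce the 5-tuple for $i=1$. The underlying space in this case is the $\bbQ_\ell$-dual of the rational Tate module $V_\ell A = T_\ell A \otimes_{\bbZ_\ell}\bbQ_\ell$, and I take $M=\bbQ$ with $S$ equal to the set of places of bad reduction of $A$.

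At finite places $v\notin S$, the N\'eron--Ogg--Shafarevich criterion gives that $\rho_\ell$ is unramified at $v$ for every $\ell\neq\mathrm{char}\,k(v)$, and Weil's theorem for abelian varieties gives that the characteristic polynomial $Q_v(X)$ of $\Frob_v$ has integer coefficients independent of $\ell$ and complex roots of absolute value $q_v^{1/2}$; semisimplicity of $\rho_\ell$ is a theorem of Faltings. This yields items (1)--(3), the unramified half of (4), and purity of weight one. For the remaining half of (4), at places $v\mid\ell$ Grothendieck's semistable reduction theorem combined with the comparison theorems of Faltings and Tsuji shows that $\rho_\ell|_{G_{F_v}}$ is de Rham in general and crystalline when $v\notin S$. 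The Hodge--Tate multiset at an embedding $\tau$ is $\{0,1\}$, read off the Hodge decomposition of $H^1(A(\bbC),\bbC)$, which gives (5).

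The core technical step is strict compatibility at bad places. After passing to a finite extension $F'/F$ over which $A$ acquires semistable reduction, inertia acts unipotently on $V_\ell A$, and the full Weil--Deligne datum — the Frobenius-semisimple part together with the monodromy operator $N$ — can be read off the N\'eron model of $A_{F'}$: the character group of its toric part and the abelian variety quotient of the identity component both carry residue Galois actions which assemble into a Weil--Deligne representation manifestly independent of $\ell$. Galois descent from $F'$ to $F$ produces the desired $\bbQ$-rational $\mathrm{WD}_v(\mathcal{R})$, and purity of the resulting Weil--Deligne parameter at bad places follows from Grothendieck's local monodromy theorem together with the Weil conjectures applied to the special fibre.

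The most delicate piece is this final step: everywhere else one quotes classical theorems (Weil, Grothendieck, Faltings, Fontaine, Tsuji) essentially off the shelf, but at bad primes one must exploit the structure theory of N\'eron models to promote the individual $\ell$-adic parameters to one $\bbQ$-rational Weil--Deligne representation, and verify $\ell$-independence of the full $(r,N)$-datum rather than just of Frobenius. Once this is available for $H^1$, taking exterior powers yields the statement for general $i$, with weight $i$ and Hodge--Tate multisets determined by the Hodge decomposition of $H^i(A(\bbC),\bbC)$.
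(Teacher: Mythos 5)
The paper offers no proof of this proposition; it is simply quoted as Theorem 2.8.1 of \cite{BCGP}, so there is no ``paper's own proof'' to compare against. Your sketch is a sound reconstruction of the standard argument.

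Two small remarks. First, the ``pure of weight $w$'' clause in the paper's definition asks only that Frobenius eigenvalues at a density-one set of good primes be Weil numbers of the right weight, so your invocation of the weight--monodromy theorem at bad primes is not needed to verify that clause; it is the strict-compatibility clause (matching Frobenius-semisimple Weil--Deligne parameters at \emph{every} finite place, including $v\mid\ell$) that carries the real content, and there your appeal to the N\'eron model / semistable reduction picture is exactly where the work lives. Second, the reduction to $i=1$ via $H^i\cong\Lambda^i H^1$ tacitly uses that exterior powers of a semisimple representation remain semisimple in characteristic zero (Chevalley), which is worth naming since semisimplicity is part of clause (4) of the definition and is a theorem of Faltings for $H^1$ itself. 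With those points made explicit, the argument is complete.
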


\begin{defn} If $A/F$ is an abelian variety, then we may write the Galois representation $\rho_{A,l}$ as $H^1(A_{\bar{F}}, \bar{\bbQ}_l)$ (which is also the dual of the Tate module $T_l A(\bar{F})$), and 
$\mathcal{R}_A$ to be the compatible system $\{\rho_{A,l}\}$. 
\end{defn}

The next lemma shows that if one representation $\rho$ comes from $H^1(A,\bar{\bbQ}_p)$ for one $p$, then we can extend it to a compatible system of representations that all comes from the same abelian variety. 

\begin{lem}
    \label{extend}
Let $\rho$ denote an absolutely irreducible representation of $G_{F}$.
Suppose that $\rho$ occurs inside $H^1(A,\bar{\bbQ}_p)$ for some abelian variety $A/F$, and assuming that the Tate conjecture holds for $A$. 
Then $\rho$ extends to a weakly compatible system of irreducible Galois representations 
 $(M,S,\{Q_{v}(X)\},\{\rho_{\lambda}\},H)$
 for some number field $M$
 such that $\rho_{\lambda}$ occurs inside $H^1(A,\bar{\bbQ}_l)$ for all $\lambda \mid l$.
\end{lem}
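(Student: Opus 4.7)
My plan is to construct the compatible system by decomposing $H^1(A,\bar{\bbQ}_l)$ at every $l$ using the endomorphism algebra $D:=\End^0(A)$, with the piece at $l=p$ being the given $\rho$. The Tate conjecture is the essential input that makes these pieces behave uniformly in $l$.

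First I would reduce to the case that $A$ is $F$-simple: since $A$ is isogenous to $\prod B_i^{m_i}$ with the $B_i$ pairwise non-isogenous simples, the decomposition $H^1(A,\bar{\bbQ}_p)=\bigoplus H^1(B_i,\bar{\bbQ}_p)^{m_i}$ forces the absolutely irreducible $\rho$ to lie in $H^1(B,\bar{\bbQ}_p)$ for a single simple factor $B$, and I replace $A$ by $B$. Then $D$ is a division algebra with number-field center $K$, say $[D:K]=d^2$, and I set $M:=K$. For each rational prime $l$, the decomposition $K\otimes_{\bbQ}\bbQ_l=\prod_{\lambda\mid l}K_\lambda$ induces a $G_F$-stable splitting $H^1(A,\bbQ_l)=\bigoplus_{\lambda\mid l}V_\lambda$, where $V_\lambda$ carries a commuting action of the central simple $K_\lambda$-algebra $D_\lambda:=D\otimes_K K_\lambda$. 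Extending scalars to $\bar{K}_\lambda$ and applying Morita equivalence to the splitting $D_\lambda\otimes_{K_\lambda}\bar{K}_\lambda\cong M_d(\bar{K}_\lambda)$ yields a canonical $G_F$-representation $\rho_\lambda$ on a $\bar{K}_\lambda$-vector space, with $V_\lambda\otimes_{K_\lambda}\bar{K}_\lambda\cong\rho_\lambda^{\oplus d}$ as $G_F$-modules; for $\lambda_0\mid p$ determined by the embedding witnessing $\rho\hookrightarrow H^1(A,\bar{\bbQ}_p)$, one has $\rho\cong\rho_{\lambda_0}$.

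Most of the compatibility axioms are then inherited from $H^1(A,\bar{\bbQ}_l)$. Taking $S$ to be the primes of bad reduction of $A$, unramifiedness at $v\notin S$, crystallinity at $v\mid l\notin S$, and purity of weight $1$ all transfer to direct summands cut out by endomorphisms. The Hodge--Tate multiset $H_\tau$ is constant in $\lambda$ because the $D$-action on $H^1(A,\bbC)$ preserves the Hodge filtration, so the weights $0$ and $1$ are prescribed uniformly by the decomposition of $\mathrm{Lie}(A)$ under the $K$-action. Absolute irreducibility of every $\rho_\lambda$ is the key structural point, and it comes directly from the Tate conjecture together with Faltings' semisimplicity: these give $\End_{G_F}(V_\lambda)=D_\lambda$, and Morita equivalence then yields $\End_{G_F}(\rho_\lambda)=\bar{K}_\lambda$.

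The main obstacle is producing a single characteristic polynomial $Q_v(X)\in M[X]$ that describes $\Frob_v$ for every $\lambda$ simultaneously. Because $\Frob_v$ commutes with the $D$-action, it lies in $\End_{D_\lambda}(V_\lambda)$ and therefore has a reduced characteristic polynomial in $K_\lambda[X]$ \emph{a priori}. To descend these polynomials to a single $Q_v(X)\in K[X]$ independent of $\lambda$, I would combine (a) the classical Weil fact that the full characteristic polynomial of $\Frob_v$ on $H^1(A,\bbQ_l)$ lies in $\bbZ[X]$ and is independent of $l$, with (b) the $\bbQ$-rationality of the $K$-action induced by $K\subseteq\End^0(A)$; the two together force the reduced coefficients to glue to global elements of $K$. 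This is essentially the standard fact that for an abelian variety with multiplication by $K$ the Frobenius traces lie in $K$, and once it is in place the tuple $(M,S,\{Q_v\},\{\rho_\lambda\},H)$ is a weakly compatible system of irreducible Galois representations with each $\rho_\lambda$ contained in $H^1(A,\bar{\bbQ}_l)$, proving the lemma.
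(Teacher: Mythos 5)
Your proof follows essentially the same route as the paper's: reduce to $A$ simple, decompose $H^1(A,\Q_l)$ by the action of the center $K$ of $D=\End^0(A)$ into components $V_\lambda$, use the Tate conjecture (plus Faltings' semisimplicity) to identify $\End_{G_F}(V_\lambda)=D_\lambda$, and then extract absolutely irreducible $\rho_\lambda$ by Morita equivalence; the paper makes the identical moves with $W_\lambda$ and $V_\lambda$ playing the roles of your $V_\lambda$ and $\rho_\lambda$, and cites Ribet and Shimura for the claim about $K$-rational Frobenius characteristic polynomials where you invoke the ``standard fact.'' One small caveat: the paper allows the coefficient field to be a finite extension of $K=Z(D)$ (needed when $D$ is ramified at $\lambda$, so that $\rho_\lambda$ only exists after a scalar extension), whereas you set $M:=K$ outright; this is harmless for the $Q_v(X)$ but worth flagging so the statement of the compatible system remains precise.
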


\begin{proof}
  We may assume that $A/F$ is simple.
Let $D = \mathrm{End}^0(A)= \mathrm{End}(A) \otimes_{\bbZ} \Q$, and let $M$ denote the center
of $D$. For any prime $l$, the group $H^1(A,\Q_l)$ is a module over $M \otimes \Q_l = \prod_{\lambda \mid l} M_{\lambda}$,
and we may correspondingly write $H^1(A,\Q_l) = \prod_{\lambda \mid l} W_{\lambda}$.
The Galois representations $\rho_{\lambda}$ already give rise to a weakly compatible family
of representations of dimension $2 \dim(A)/[M:\Q]$  with coefficients in $M$
(see \cite[Thm~2.1.2]{Ribet} and \cite[\S11.10]{Shimura}).

On the other hand, by the Tate conjecture, we know that $$D \otimes_{\Q} \Q_l = \End_{G_{F}} H^1(A,\Q_l),$$
and hence $D \otimes_{M} M_{\lambda} = \mathrm{End}_{G_{F}}(W_{\lambda})$. 
It follows that, over the algebraic closure $\bar{M}_{\lambda}$, we may write the extension of scalars of $W_{\lambda}$ as $V^n_{\lambda}$ for some (absolutely) irreducible representation $V_{\lambda}$
of dimension $2 \dim(A)/(n [M:\Q])$ with $n^2 = [D:M]$. Here notice that an extension of scalars is only necessary
for the finitely many primes in $M$ which $D/M$ is ramified.
Since the representations $V^{n}_{\lambda}$ form the compatible system associated to $W_{\lambda}$, it follows that the $V_{\lambda}$ themselves form a compatible system also with coefficients in some fixed finite extension of $M$. 
\end{proof}

There is another way to get compatible systems of $\GSp_4$ representations, which is to consider the induction of compatible systems of $\GL_2$ representations. Let $F/E$ be a quadratic extension and $r:\rho_F\to \GL(V)=\GL_2(K)$ is a 2-dimensional representation. Choose $\sigma$ to be an element in $G_E\setminus G_F$, and suppose it satisfies that $\det r=\det r^\sigma$. Then $\det r$ extends to two possible characters $\chi_1$ and $\chi_2$ on $G_E$. Let $$\rho=\mathrm{Ind}_{G_F}^{G_E}\,r:G_E\to \GL(V\oplus \sigma V)=\GL_4(K),$$
and notice that $\wedge^2\rho$ is the sum of $\chi_1$, $\chi_2$ and some 4-dimensional representation. Therefore under some suitable basis $\rho$ gives simplectic representations $\rho_i:G_E\to \GSp_4(K)$ with similitude character $\chi_i$ for $i=1$ or 2. 

With our choice of $J$, the image will land in the subgroup
\[\left(\begin{matrix}
*&0&0&*\\
0&*&*&0\\
0&*&*&0\\
*&0&0&*
\end{matrix}\right)\cap \GSp_4(K).\]

In this paper we are only interested in the case where $\det r=\det r^\sigma=\epsilon^{-1}$ is the inverse cyclotomic character of $G_F$, and we take $\mathrm{Ind}_{G_F}^{G_E} r$ to be the symplectic representation that has similitude character the inverse cyclotomic character of $G_E$. For example, if $C/F$ is an elliptic curve and $r$ be the representation that corresponds to the dual of the Tate module of $C$, then $\mathrm{Ind}_{G_F}^{G_E}\,r$ would correspond to the dual of the Tate module of the abelian variety $\mathrm{Res}_E^F\, C$, with the symplectic structure coming from the Weil pairing.

In order to derive potential automorphy results we need the following ordinary properties of the compatible system. 

\begin{defn}
For a set of primes $S$ of $\bbQ$, we say $S$ is of strongly positive density if for any finite extension $F/\bbQ$, the set of primes over $S$ in $F$ has positive density. 
\end{defn}

\begin{defn}\label{ord}
For a prime $p$ in $\bbQ$ that totally splits in $F$ and $M$, for a prime $\lambda\mid p$ in $M$, we say that $\mathcal{R}$ is $\lambda$-ordinary if the representations $\rho_\lambda|_{G_{F_v}}$ are ordinary mod $\lambda$ for all $v\mid p$ in $F$, and $p$-ordinary if it is $\lambda$-ordinary for all $\lambda\mid p$ in $M$. 

We say $\mathcal{R}$ is strongly positive ordinary if it is $p$-ordinary for a strongly positive set of primes in $\bbQ$. 

Given $\lambda$-ordinariness, for a prime $\lambda\mid p$ in $M$, $\mathcal{R}$ is called $\lambda$-distinguished ordinary if for any $v \mid p$ in $F$, $Q_{v}(X)$ has distinct unit roots mod $\lambda$. 
\end{defn}

\newpage

\section{Abelian Varieties of $\GSp_4$-type}

In this chapter we introduce a list of abelian varieties that give compatible systems of $\GSp_4$ representations. Let $F$ be a totally real Galois extension of $\bbQ$ and let $A$ be a simple principally polarized abelian variety over $F$ of dimension $g$ with no CM. We write $\End^0(A)=\End(A)\otimes \bbQ$. 

\subsection{Real $\GSp_4$ Type Abelian Varieties}
\begin{defn}
We say that the abelian variety $A$ is of real $\GSp_4$ type if $K=\End^0(A)$ is a totally real field of degree $d$, with $g=2d$.
\end{defn}

Let $l$ be a prime in $\bbQ$ that totally splits in $F$ and suppose $l=\lambda_1\cdots \lambda_d$ also totally splits in $K$. Then $A[l]\cong\bigoplus_{i=1}^d A[\lambda_i]$ as  $\OO_K$-modules and thus we have the Tate modules
\[T_l(A)=\lim_{\longleftarrow}A[l^n]=\bigoplus_{i=1}^d T_{\lambda_i}(A)\] also decomposes into $d$ components. Also, the $l$-divisible groups $A[l^\infty]$, defined as the inductive system $(A[l^n],\iota_n)_n$, where the exact sequence holds: 
\[\begin{tikzcd}
0 \arrow[r] & {A[l^n]} \arrow[r, "\iota_n", hook] & {A[l^{n+1}]} \arrow[r, "{[l^n]}"] & {A[l^{n+1}]}
\end{tikzcd},\]
also decomposes into the "$\lambda_i$-divible" groups defined in the same way. That is, 
\[A[l^\infty]\cong\bigoplus_{i=1}^d A[\lambda_i^\infty].\]

For such primes $l$, let $\lambda\mid l$ be a prime over $l$ in $K$, then we can write $$\rho_{A,\lambda}^\vee:G_F\to \GL(T_\lambda A)$$ to be the Galois representation coming from the Tate module, and $$\bar{\rho}_{A,\lambda}^\vee:G_F\to \GL(A[\lambda])$$ be the residual representation that comes from $A[\lambda]$. Note that $\rho_{A,\lambda}^\vee$ and $\bar{\rho}_{A,\lambda}^\vee$ are the dual of $\rho_{A,\lambda}$ and $\bar{\rho}_{A,\lambda}$ respectively, the latter being the  Galois representation and the residual representation coming from $V_{A,\lambda}=H^1(A,K_\lambda)$.  

The following propositions are a series of well-known results of $l$-adic Galois representations that come from geometry: 

\begin{prop} When $A$ is of real $\GSp_4$ type, we have 
$$\rho_{A,\lambda}:G_F\to\GSp_4(K_\lambda)$$ and $$\bar{\rho}_{A,\lambda}:G_F\to \GSp_4(K_{(\lambda)})\cong \GSp_4(\bbF_l).$$ 

\end{prop}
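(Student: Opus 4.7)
The plan is to use the principal polarization to build a $G_F$-equivariant, non-degenerate, alternating $K_\lambda$-valued (respectively $\OO_{K,\lambda}/\lambda$-valued) pairing on $V_\lambda A := T_\lambda A \otimes \Q_l$ (resp.\ $A[\lambda]$), and then to choose a basis in which its Gram matrix is the fixed $J$; that factors the Galois action through $\GSp_4$.

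First I would pin down the ranks. Since $g = 2d$ and $V_l A$ has $\Q_l$-dimension $2g = 4d$, while $V_l A$ is a module over $K \otimes \Q_l = \prod_{\lambda \mid l} K_\lambda$, one wants $V_\lambda A$ free of $K_\lambda$-rank $4$. This is the standard statement for abelian varieties with real multiplication (Shimura/Ribet); using Faltings' theorem, the commutant of $G_F$ on $V_l A$ equals $K \otimes \Q_l$, which forces the local rank to be uniform across $\lambda$. In our situation $l$ splits completely in $K$, so $K_\lambda = \Q_l$ and $\OO_{K,\lambda}/\lambda = \bbF_l$; thus $T_\lambda A$ is free of rank $4$ over $\bbZ_l$ and $A[\lambda]$ has $\bbF_l$-dimension $4$.

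Next I would produce the symplectic form. The principal polarization gives the Weil pairing $e_l : T_l A \times T_l A \to \bbZ_l(1)$, which is perfect, alternating and $G_F$-equivariant, and satisfies $e_l(\alpha x, y) = e_l(x, \alpha^\dagger y)$ for the Rosati involution $\dagger$ on $\End^0(A)$. The restriction of $\dagger$ to $K$ is a positive involution on the totally real field $K$; by Albert's classification the only such involution is the identity, so $e_l(\alpha x, y) = e_l(x, \alpha y)$ for all $\alpha \in K$. Taking $\alpha$ to be the idempotent $\varepsilon_\lambda \in K \otimes \Q_l$ projecting onto the $\lambda$-factor shows that $V_{\lambda}$ and $V_{\lambda'}$ are $e_l$-orthogonal for $\lambda \neq \lambda'$. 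I would then define a $K_\lambda$-valued pairing $e_\lambda : T_\lambda A \times T_\lambda A \to \OO_{K,\lambda}(1)$ as the unique pairing with
\[
\mathrm{tr}_{K_\lambda/\Q_l}\bigl(c \cdot e_\lambda(x, y)\bigr) \;=\; e_l(cx, y)\qquad\text{for all } c \in \OO_{K,\lambda},
\]
using non-degeneracy of the trace form of the étale extension $K_\lambda/\Q_l$. Bilinearity over $\OO_{K,\lambda}$, alternating-ness and $G_F$-equivariance are immediate from the corresponding properties of $e_l$ together with the $K$-linearity established above; perfectness follows because $e_l$ is perfect and the $\lambda$-components are mutually orthogonal.

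Finally, choosing an $\OO_{K,\lambda}$-basis of $T_\lambda A$ in which the Gram matrix of $e_\lambda$ is $J$ realizes $\rho_{A,\lambda} : G_F \to \GSp_4(K_\lambda)$ with similitude character $\epsilon$; reducing mod $\lambda$ gives $\bar{\rho}_{A,\lambda} : G_F \to \GSp_4(\OO_{K,\lambda}/\lambda) = \GSp_4(\bbF_l)$, since we arranged $l$ to split completely. The only nontrivial input is the uniform rank statement in Step~1 (and equivalently the triviality of Rosati on $K$ in Step~2); both are classical, so the main obstacle is purely verifying that one has invoked the right form of Faltings/Albert — the rest of the proof is a bookkeeping exercise in trace pairings.
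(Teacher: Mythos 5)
The paper records this proposition with no proof at all --- it is listed as one of ``a series of well-known results'' --- so there is no argument of the paper's own to compare against. Your blind proof is the standard one and is essentially correct: split $T_lA$ into $\lambda$-isotypic pieces via the idempotents of $K\otimes\Q_l$, note that the pieces are mutually orthogonal under the polarization pairing because the Rosati involution is the identity on the totally real field $K$ (Albert), and then descend the restricted $\bbZ_l$-valued alternating pairing to an $\OO_{K_\lambda}$-valued one by the trace (which, in the totally split case the proposition actually lives in, is the identity since $K_\lambda=\Q_l$).

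One step as written is not right, though it does not sink the proof. You say that Faltings gives $\End_{G_F}(V_lA)=K\otimes\Q_l$ and that this ``forces the local rank to be uniform across $\lambda$.'' It does not: Faltings only yields that each $V_\lambda$ is absolutely irreducible over $K_\lambda[G_F]$, which places no constraint on the dimensions. The uniform rank is a more elementary, pre-Faltings fact: $H_1(A(\bbC),\Q)$ is a $K$-vector space (so free of rank $2g/[K:\Q]=4$ since $K$ is a field), and $V_\lambda A\cong H_1(A(\bbC),\Q)\otimes_K K_\lambda$. That is what the Shimura/Ribet references you also cite actually establish; drop the Faltings sentence and rely on them. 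Finally, a cosmetic point: the symplectic basis you build lives on $T_\lambda A$, so you have directly produced $\rho_{A,\lambda}^\vee$ with similitude character $\epsilon$, whereas the paper's $\rho_{A,\lambda}$ is on $H^1(A,K_\lambda)=(T_\lambda A)^\vee$ with similitude $\epsilon^{-1}$; since the contragredient of a $\GSp_4$-valued representation is again $\GSp_4$-valued, the conclusion is the same, but the dualization should be made explicit to match the paper's conventions.
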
 

Let $p$ be a prime that totally splits in $F$ with $v\mid p$ a prime above $p$ in $F$, we have the following well-known property for representations $\rho_{A,\lambda}|_{G_{F_v}}$.

\begin{prop}If the abelian variety $A$ is of $\GSp_4$-type, then the characteristic polynomial of Frobenius $\rho_{A,\lambda}(\Frob_v)$ is of form $$Q_v(X)=X^4-a_v X^3+b_v X^2-p\chi(\Frob_v) a_v X+p^2\chi^2(\Frob_v)$$ is independent from the choice of $l$ and $\lambda$ and the coefficients therefore are in $\OO_K$. Here $\chi$ is a totally even finite order character hence $\zeta:=\chi(\Frob_v)$ is a root of unity with uniformly bounded order. The roots of the polynomials are of form $$\alpha, p\zeta\alpha^{-1}, \beta, p\zeta\beta^{-1},$$ and are all Weil numbers, which are numbers $\pi$ such that for any embedding $\psi:\bar{\bbQ}\hookrightarrow \bbC$, $|\psi(\pi)|=\sqrt{p}.$ 
\end{prop}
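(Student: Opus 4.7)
The plan is to split the proposition into four ingredients: (i) the palindromic shape of $Q_v(X)$, forced by the symplectic structure; (ii) integrality of $a_v, b_v \in \OO_K$ and their independence from the choice of $\lambda$; (iii) identification of the similitude character as $\nu = \chi\epsilon^{-1}$ with $\chi$ a totally even finite-order character of uniformly bounded order; and (iv) the Weil-number property of the roots. Ingredients (i), (ii), and (iv) are essentially classical once (iii) is in hand, so (iii) is the substantive step.

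For (i), since $\rho_{A,\lambda}$ lands in $\GSp_4(K_\lambda)$ with similitude $\nu$, the matrix $M := \rho_{A,\lambda}(\Frob_v)$ satisfies $M^t J M = \nu(\Frob_v) J$, so its eigenvalues are stable under the involution $\alpha \mapsto \nu(\Frob_v)/\alpha$. Writing $\nu(\Frob_v) = p\zeta$ with $\zeta = \chi(\Frob_v)$ (justified in (iii)), the four eigenvalues split into two pairs $(\alpha, p\zeta/\alpha)$ and $(\beta, p\zeta/\beta)$, and expanding the corresponding product directly yields the palindromic polynomial in the statement with $a_v = \alpha + \beta + p\zeta/\alpha + p\zeta/\beta$. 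For (ii), I would use the decomposition $H^1(A_{\bar F},\bbQ_l) = \bigoplus_{\lambda\mid l} W_\lambda$ of $K\otimes\bbQ_l$-modules from the proof of Lemma \ref{extend}: the characteristic polynomial of $\Frob_v$ on the full $H^1(A_{\bar F},\bbQ_l)$ lies in $\bbZ[X]$ and is $l$-independent by Theorem 2.8.1 of \cite{BCGP}, and since $\Frob_v$ commutes with the $K$-action it factors as $\prod_{\tau : K \hookrightarrow \bar{\bbQ}} \tau(Q_v)$ for a single polynomial $Q_v \in \OO_K[X]$, giving the desired integrality and $\lambda$-independence. For (iv), good reduction at $v$ identifies $\rho_{A,\lambda}(\Frob_v)$ with geometric Frobenius on $H^1$ of the reduction $\bar A_v/\bbF_p$, whose eigenvalues are Weil numbers of weight one by Deligne's theorem.

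The substantive step is (iii). The $\GSp_4$ identity $\det = \nu^2$ reduces the task to analyzing $\det\rho_{A,\lambda}$. Purity of $\mathcal{R}_A$ of weight one gives $\det\rho_{A,\lambda} = \epsilon^{-2}\chi_0$ for some finite-order character $\chi_0$, and taking a square root yields $\nu = \epsilon^{-1}\chi$ with $\chi^2 = \chi_0$, so $\chi$ is finite order; the compatible family structure forces $\chi$ to come from a single global character rather than $\lambda$-specific data, so its order is uniformly bounded. For totally-evenness at a real place $v$, I would invoke Hodge parity: for a weight-one polarized Hodge structure the Riemann form satisfies $\langle c_v x, c_v y\rangle = -\langle x, y\rangle$, so $\nu(c_v) = -1$, and combined with $\epsilon^{-1}(c_v) = -1$ this forces $\chi(c_v) = +1$. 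This Hodge-theoretic sign calculation is the main obstacle; the remaining ingredients follow from the symplectic structure of $\rho_{A,\lambda}$ together with standard properties of the $l$-adic cohomology of abelian varieties.
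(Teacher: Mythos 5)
The paper does not prove this proposition; it is stated (together with the surrounding propositions) as ``a series of well-known results of $l$-adic Galois representations that come from geometry,'' with the relevant references being Shimura \S11.10 and Ribet. So there is no paper proof to compare against; I am evaluating your proposal on its own merits.

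Your reconstruction is essentially correct and follows the standard route. A few points worth tightening. In step (ii), the phrase ``it factors as $\prod_\tau \tau(Q_v)$ for a single polynomial $Q_v \in \OO_K[X]$'' elides the real content: a priori the $K\otimes\bbQ_l$-linear characteristic polynomial has coefficients in $K\otimes\bbQ_l$, not in $K$, and what makes it descend to $K$ is that $\rho_{A,\lambda}(\Frob_v)$ is (by Tate) realized by the actual Frobenius endomorphism $\pi_v \in \End(\bar A_v)$, a genuine algebraic element commuting with $\OO_K$, so its $K$-characteristic polynomial lives in $\OO_K[X]$ and is manifestly $\lambda$- and $l$-independent; this is the Shimura argument. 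In step (iii), the wording ``taking a square root yields $\nu = \epsilon^{-1}\chi$'' reads as if you are defining $\nu$ from $\det$, which would be circular; what you actually mean (and what is correct) is that $\nu$ is intrinsic to the symplectic structure, $\nu^2=\det=\epsilon^{-2}\chi_0$, so $\chi:=\nu\epsilon$ has finite-order square and hence (by continuity, compactness, and density of Frobenii) is itself finite order. The uniform boundedness of $\zeta$'s order then follows because $\chi$ is a single character of the compatible system, independent of $\lambda$. Your Hodge-parity argument for total evenness is correct: $\nu(c_v)=-1$ for weight-one polarized Hodge structures, $\epsilon^{-1}(c_v)=-1$, so $\chi(c_v)=+1$. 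One purely cosmetic point in (iv): the Riemann hypothesis for abelian varieties over finite fields is Weil's theorem, not Deligne's. None of these affect the correctness of the proposal.
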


By an argument in \cite{BCGP} Theorem 8.5.2, since $\chi$ is finite order and totally even we can find a totally real extension $F'/F$ such that $\chi|_{G_{F'}}\cong \psi^2$ and we can \emph{untwist} $\chi$ by considering the compatible system $\rho_{A,\lambda}|_{G_{F'}}\otimes (\psi^{-1})$. Therefore, from now on (in the following two cases) we may assume that $\chi$ is trivial and the compatible system of $\GSp_4$ representations will have similitude $\epsilon^{-1}$. 

\begin{prop}
Abelian variety $A$ of real $\GSp_4$-type gives a weakly compatible system of $\GSp_4$ representations of $G_F$ pure of weight 1, still denoted by $\mathcal{R}_A$, over $K$.  
\end{prop}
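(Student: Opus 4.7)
The plan is to assemble the proposition from three ingredients already in place: the general compatibility result for abelian varieties (Theorem 2.8.1 of \cite{BCGP} quoted above), the refinement procedure of Lemma \ref{extend}, and the two preceding propositions on the $\GSp_4$-structure and the shape of $Q_v(X)$. The task reduces to checking that these fit together to satisfy each clause of the definition of a weakly compatible system with coefficients in $K$.

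First I would take the strictly compatible system $\{H^1(A_{\bar{F}},\bar\bbQ_l)\}$ provided by Theorem 2.8.1, which is pure of weight $1$ and defined over $\bbQ$. This already supplies the finite ramification set $S$ (the primes of bad reduction of $A$), the de Rham/crystalline property at primes $v\mid l$, and a fixed multiset of Hodge--Tate weights coming from $h^{1,0}=h^{0,1}=\dim A = 2d$. Then I would apply the argument of Lemma \ref{extend} in the degenerate case $D=\End^0(A)=K$, so that $M=K$ and the reduced degree is $n=1$: the decomposition $H^1(A,\bbQ_l)=\prod_{\lambda\mid l} W_\lambda$ along $K\otimes \bbQ_l = \prod_{\lambda\mid l} K_\lambda$ directly yields the $4$-dimensional pieces $W_\lambda=V_{A,\lambda}$ over $K_\lambda$, with no further decomposition over $\bar{M}_\lambda$ required. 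Standard compatibility then promotes these to a weakly compatible system with coefficients in $K$.

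Next, the $\GSp_4$-refinement of the target group and the $\OO_K$-rationality of the characteristic polynomials are exactly the content of the two preceding propositions: the polarization on $A$ combined with the $K$-action produces a $K_\lambda$-valued nondegenerate alternating form on $V_{A,\lambda}$ on which $G_F$ acts via a similitude character, and the characteristic polynomial $Q_v(X)$ of $\rho_{A,\lambda}(\Frob_v)$ lies in $\OO_K[X]$ and is independent of $\lambda$. After the untwisting step discussed in the excerpt (passing to a totally real extension $F'/F$ so that $\chi$ becomes trivial), the similitude character is identified with $\epsilon^{-1}$. Finally, purity of weight $1$ is immediate from the description of the roots of $Q_v(X)$ as Weil numbers of absolute value $\sqrt{p}$ under every complex embedding of $\bar\bbQ$: one has $|\iota(\alpha)|^2=p$ for each root $\alpha$, which verifies the purity condition on the density-one set of primes of good reduction.

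The only substantive thing to check is that the axioms of a weakly compatible system remain valid after refining the coefficient field from $\bbQ$ to $K$, in particular that at primes $v\mid l$ the individual component $\rho_{A,\lambda}|_{G_{F_v}}$ remains de Rham (and crystalline at good $v$) and that its Hodge--Tate weights split along the $K$-embeddings $\tau\colon K\hookrightarrow \bar M$ as expected. This is routine because the $K$-action is by algebraic endomorphisms and so commutes with the formation of the de Rham and crystalline functors; thus the de Rham structure on $H^1(A,\bbQ_l)\otimes_{\bbQ_l} \bar\bbQ_l$ decomposes compatibly with the idempotents cutting out each $\lambda$-component, and the Hodge--Tate weights of $\rho_{A,\lambda}$ at $\tau$ are read off from the pieces of the Hodge decomposition indexed by the embeddings of $K$ into $\bar M$ lying over $\tau$. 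I do not expect any genuine obstacle here; the proposition is really a packaging statement gathering results already established above.
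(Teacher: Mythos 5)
Your proposal is correct and amounts to the standard argument; the paper itself does not supply a proof of this proposition (it is presented as one of ``a series of well-known results of $l$-adic Galois representations that come from geometry''), but your reconstruction parallels the proof the paper does write down for the analogous fake $\GSp_4$-type case, namely: decompose $H^1(A,\bbQ_l)$ along $K\otimes\bbQ_l=\prod_{\lambda\mid l}K_\lambda$ to obtain the $4$-dimensional pieces $V_{A,\lambda}$, use the polarization and $K$-action to get the $\GSp_4$-structure with $\OO_K$-integral characteristic polynomials, and read off purity from the Weil numbers. One minor remark: invoking Lemma \ref{extend} is more machinery than is needed here, since with $D=\End^0(A)=K$ commutative the decomposition is already furnished by the $K$-action on $T_lA$ for every $l$, with no appeal to the Tate conjecture required; you correctly observe you are in the degenerate $n=1$ case, so the argument still goes through, but a direct appeal to the $K\otimes\bbQ_l$-module structure would be cleaner.
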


\subsection{Fake $\GSp_4$ Type Abelian Varieties}
Now we introduce a type of abelian varieties that also give compatible systems of $\GSp_4$ representations of $G_F$. 

\begin{defn}
We say that the abelian variety $A$ is of fake $\GSp_4$ type if $D=\End^0(A)$ is a quarternion algebra over a totally real field $K$ of degree $d$, with $g=4d.$
\end{defn}

\begin{prop}
Abelian variety $A$ of fake $\GSp_4$ type gives a weakly compatible system $\mathcal{R}_A$ of $\GSp_4$ representations of $G_F$ defined over $K$ and is pure of weight 1. 
\end{prop}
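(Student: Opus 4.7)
The plan is to parallel the real $\GSp_4$-type case (and the proof of Lemma \ref{extend}), adapted for the non-commutative endomorphism algebra $D$. First, since $A$ is simple with $\End^0(A)=D$, the $D$-module $H^1(A,\bbQ)$ is free, and the count $\dim_\bbQ H^1(A,\bbQ)=8d=2\dim_\bbQ D$ forces the rank to be $2$. Tensoring with $\bbQ_l$ yields
\[H^1(A,\bbQ_l)=\bigoplus_{\lambda\mid l}W_\lambda,\]
with each $W_\lambda$ a free rank-$2$ module over $D_\lambda:=D\otimes_K K_\lambda$ of $K_\lambda$-dimension $8$.

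At every prime $\lambda$ of $K$ at which $D$ splits (all but finitely many), $D_\lambda\cong M_2(K_\lambda)$, and Morita equivalence gives $W_\lambda\cong V_\lambda\otimes_{K_\lambda}K_\lambda^2$ where $V_\lambda:=eW_\lambda$ for a primitive idempotent $e\in D_\lambda$ is a $4$-dimensional continuous $K_\lambda[G_F]$-representation. At the finitely many ramified primes I would pass to a maximal subfield of $D$ (a finite extension of $K$ splitting $D$) and repeat the Morita construction; exactly as in Lemma \ref{extend}, the $V_\lambda$ then assemble into a weakly compatible system over some fixed finite extension of $K$.

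The symplectic structure comes from the principal polarization, which yields an alternating $G_F$-equivariant pairing $H^1(A,\bbQ_l)\otimes H^1(A,\bbQ_l)\to \bbQ_l(-1)$ compatible with $D$ via the Rosati involution $d\mapsto d^\dagger$. Restricted to $W_\lambda$ and viewed through the Morita identification, this pairing factors as the tensor of an alternating form on $V_\lambda$ valued in $K_\lambda(-1)$ with the standard symplectic form on $K_\lambda^2$, giving $V_\lambda$ a $\GSp_4$-structure with similitude the inverse cyclotomic character. Purity of weight $1$ and strict compatibility transfer directly from $\{H^1(A,\bbQ_l)\}_l$ (Theorem 2.8.1 of \cite{BCGP}) to the subsystem $\{V_\lambda\}_\lambda$.

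The main obstacle is correctly identifying the Rosati involution on $D$ in order to confirm that the induced pairing on $V_\lambda$ is alternating rather than symmetric. By Albert's classification of positive involutions, a quaternion algebra $D$ over a totally real field $K$ with positive involution is of Type II (indefinite) or Type III (definite); only in the Type II case is the Rosati involution a twist of the main involution by a traceless element, so that the Morita factorization produces the alternating form required for a $\GSp_4$-structure rather than an orthogonal one. The proposition therefore implicitly concerns Type II fake $\GSp_4$-type abelian varieties, and in this case the construction above goes through.
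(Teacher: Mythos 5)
Your proof takes the same route as the paper --- decompose $W_\lambda$ via idempotents in $D\otimes K_\lambda\cong M_2(K_\lambda)$, handle ramified $\lambda$ via a quadratic splitting extension of $K$, and inherit compatibility and purity from the rank-$8$ system $\{H^1(A,\bbQ_l)\}$ --- but you actually justify the symplectic structure on $V_\lambda$, which the paper simply asserts. Your Albert-classification observation is correct and substantive, and it exposes a hypothesis the paper leaves implicit: for a \emph{definite} quaternion algebra $D$ (Albert Type III), the Rosati involution restricted to $D$ is the main involution, which on $M_2(\bar{\bbQ}_l)$ is of symplectic type; the rank-one idempotent pieces of $W_\lambda$ are then totally isotropic and mutually dual, and the form they induce on $V_\lambda$ is \emph{symmetric}, so $\rho_{A,\lambda}$ would land in $\mathrm{GO}_4$ rather than $\GSp_4$. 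Since simple Type III abelian varieties of dimension $4d$ with $[K:\bbQ]=d$ do exist, the proposition genuinely requires $D$ to be indefinite, a condition the definition in \S 2.2 omits even though the Shimura datum in \S 5 imposes it.

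One small slip in your write-up: for indefinite $D$ the Rosati involution on $M_2(\bar{\bbQ}_l)$ is conjugate to the \emph{transpose}, which is the adjoint for the \emph{symmetric} form on $K_\lambda^2$, so the alternating Weil pairing on $W_\lambda$ factors as (alternating on $V_\lambda$) $\otimes$ (symmetric on $K_\lambda^2$), not ``the standard symplectic form on $K_\lambda^2$'' as you wrote. Alternating tensor alternating is symmetric, which is the Type III picture, not the Type II one. The cleanest version is to note that the transpose fixes the rank-one idempotent $e_{11}$, so the restriction of the Weil pairing to $e_{11}W_\lambda$ is well defined, nondegenerate, and automatically alternating, whereas for the main involution $e_{11}^\dagger=e_{22}$ and the restriction to $e_{11}W_\lambda$ vanishes.
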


\begin{proof}
For a prime $l$ in $\bbQ$ that totally splits in $K$ and $F$ and $\lambda\mid l$ in $K$ that is unramified in $D$, we have $D\otimes  K_\lambda\cong M_2(K_\lambda)$ and the characteristic polynomial of $\Frob_v$ for $v\mid p$ in $F$ would thus have form \[Q_{v}(X)=Q_{A,v}(X)^2,\] where 
\[Q_{A,v}(X)=X^4-a_v X^3+b_v X^2-p a_v X+p^2\]
with coefficients in $K$. Also, if we let  $\rho_{\lambda}:G_{F_v}\to\GL_8(K_\lambda)$ to be the the Galois representation that comes from $W_{A, \lambda}=H^1(A,K_\lambda)$ (which is also the dual of $\lambda$-adic Tate module $T_\lambda A$), then the image of $\rho_{\lambda}$ lies in $\big(\GSp_4(K_\lambda)\big)^2$. In other words, we can write $W_{A, \lambda}=V_{A,\lambda}\oplus V_{A,\lambda}$ and $\rho_{\lambda}=\rho_{A,\lambda}\oplus \rho_{A,\lambda}$ for some $\rho_{A,\lambda}:G_F\to\GSp_4(K_v)$, such that the characteristic polynomial of $\Frob_v$ with respect to $\rho_{A,\lambda}$ is $Q_{A,v}(X)$. 

For primes that are ramified in $D$, then after a quadratic extension $K'/K$ such that the quarternion algebra $D$ splits, or $D\otimes K'\cong M_2(K')$, we can still have $\rho_\lambda=\rho_{A,\lambda}\oplus\rho_{A,\lambda}$ 
for some $\rho_{A,\lambda}:G_F\to\GSp_4(K'_\lambda)$. Thus the characteristic polynomial of Frobenius would also have form
\[Q_v(X)=Q_{A,v}(X)^2,\] where 
\[Q_{A,v}(X)=X^4-a_v X^3+b_v X^2-pa_v X+p^2,\] and with coefficients of $Q_{A,v}$ in the integer ring of an at most some quadratic extension of $K$ (since the coefficients of $Q_v(X)$ are in $K$). Thus the characteristic polynomial of $\Frob_v$ with respect to $\rho_{A,\lambda}$ is $Q_{A,v}(X)$. 

Now that we have definition of $\rho_{A,\lambda}$ on all $v$'s we are now able to show that $\rho_{A,\lambda}$'s form a compatible system of $\GSp_4$ representations. This comes from the fact that $\rho_{\lambda}$'s form a rank $8$ compatible system of representations and $\rho_{\lambda}\cong\rho_{A,\lambda}\oplus\rho_{A,\lambda}$. 
\end{proof}

\subsection{Abelian Varieties with Descent Data}
In \cite{Ribet}, Ribet introduced the $\bbQ$-curves, which are elliptic curves $A/F$ such that $A^\sigma$ is isogenous to $A$ for all $\sigma\in\Gal(F/\bbQ)$. He then proved that all $\bbQ$-curves are modular. Here we introduce an analogous definition for $\GSp_4$ type abelian varieties. 

\begin{defn}
Let $F/E/\bbQ$ be two Galois extensions of $\bbQ$. A (real or fake) $\GSp_4$ type abelian variety $A$ over $F$ is said to be with descent data to $E$ if $A^\sigma$ is $F$-isogenous to $A$ for all $\sigma\in \Gal (F/E)$. 
\end{defn}

\begin{prop}
If $A$ is with descent data to $E$, then $A$ gives a compatible system of twisted $\GSp_4$ representations of $G_E$. That is, there exists a compatible system $\mathcal{R}=(K,S,\{\rho_{\lambda}\},\{Q_v\},\{H_\tau\})$ of $\GSp_4$ representations $\rho_{\lambda}:G_E\to \GSp_4(\bar{K}_\lambda)$ such that  \[\rho_{\lambda}|_{G_F}=\chi\otimes \rho_{A,\lambda},\] where  $\chi:G_E\to \bar{K}_\lambda^\times$ a totally even finite order character and $\rho_{A,\lambda}$ is the $l$-adic representation of that comes from $V_{A,\lambda}\subseteq H^1(A,K_\lambda)$. The coefficients will not necessarily be in $K$, but will be in some fixed finite extension of $K$. 
\end{prop}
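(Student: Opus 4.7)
The plan is to mimic Ribet's descent construction for $\bbQ$-curves in the $\GSp_4$ setting: first use the isogenies supplied by the descent hypothesis to build a projective representation of $G_E$ extending $\rho_{A,\lambda}$, then kill the obstruction to a linear lift by twisting by a totally even finite-order character $\chi$.

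For each $\sigma \in \Gal(F/E)$, fix an $F$-isogeny $\phi_\sigma : A \to A^\sigma$ with $\phi_1 = \mathrm{id}$. Via $\lambda$-adic cohomology, $\phi_\sigma$ induces a $\bar{K}_\lambda$-linear isomorphism $\phi_{\sigma,*} : V_{A,\lambda} \otimes \bar{K}_\lambda \to V_{A^\sigma,\lambda} \otimes \bar{K}_\lambda$ (in the fake case, after extending scalars to a splitting field of $D$ so that $\phi_\sigma$ acts on the 4-dimensional summand $V_{A,\lambda}$). Choosing once and for all a set-theoretic lift $\Gal(F/E) \to G_E$, and combining $\phi_{\bar g,*}$ with the canonical identification of $V_{A^{\bar g},\lambda}$ with $V_{A,\lambda}$ equipped with its $\bar g$-conjugated $G_F$-action, defines a set-theoretic map
\[ \tilde\rho_\lambda : G_E \longrightarrow \GL_{\bar{K}_\lambda}\bigl(V_{A,\lambda} \otimes \bar{K}_\lambda\bigr) \]
which restricts to $\rho_{A,\lambda}$ on $G_F$. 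The failure of $\tilde\rho_\lambda$ to be multiplicative is measured by $c(\sigma,\tau) = \tilde\rho_\lambda(\sigma)\tilde\rho_\lambda(\tau)\tilde\rho_\lambda(\sigma\tau)^{-1}$, which commutes with the image of $\rho_{A,\lambda}$ and hence, by absolute irreducibility, acts as a scalar in $\bar{K}_\lambda^\times$; this gives a 2-cocycle that factors through the finite quotient $\Gal(F/E)$.

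The next step is to split $c$ after inflation to $G_E$. Since $c$ takes values in a finite subgroup of $\bar{K}_\lambda^\times$ (after passing to a finite extension of $K$ to absorb scalars from the isogenies, and from a quaternion splitting field in the fake case), its cohomology class becomes trivial after inflation from $\Gal(F/E)$ to $G_E$: one produces a finite-order character $\chi : G_E \to \bar{K}_\lambda^\times$ whose coboundary realizes $c^{-1}$. Then $\rho_\lambda := \chi \cdot \tilde\rho_\lambda$ is an honest homomorphism satisfying $\rho_\lambda|_{G_F} = \chi|_{G_F} \otimes \rho_{A,\lambda}$. The polarization-induced Weil pairing on $V_{A,\lambda}$ is preserved by each $\phi_\sigma$ up to a similitude factor, so the image of $\rho_\lambda$ lies in $\GSp_4(\bar{K}_\lambda)$; demanding the similitude character to equal $\epsilon^{-1}$ (after the untwist of \S2.1) pins down $\chi$ up to a totally even finite-order character. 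Total evenness of $\chi$ itself is then forced by compatibility with complex conjugation, exactly as in \cite[\S8.5]{BCGP}.

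For the compatible system structure: the isogenies $\phi_\sigma$ are $\lambda$-independent, so $c$ takes values in $K^\times$ (respectively in a fixed quadratic extension of $K$ in the fake case), and a single finite-order character $\chi$ of $G_E$ splits $c$ for all $\lambda$ simultaneously, giving coefficients in a fixed finite extension of $K$ as asserted. The polynomials $Q_v(X)$ and Hodge--Tate multisets $H_\tau$ are then read off from those of $\rho_{A,\lambda}$ by twisting by $\chi$. The main obstacle I anticipate is controlling the character $\chi$: it must be finite order, totally even, and $\lambda$-independent, and in the fake case one has to track the auxiliary quaternion splitting field carefully to see that the scalars indeed land in a fixed extension of $K$---this is the source of the enlarged coefficient field mentioned in the statement, and is the direct analogue in the $\GSp_4$ setting of the technical heart of \cite{Ribet}.
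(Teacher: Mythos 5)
Your framework is exactly the one the paper uses: build a projective extension of $\rho_{A,\lambda}$ to $G_E$ via the isogenies $\phi_\sigma$, observe by absolute irreducibility that the failure of multiplicativity is a scalar $2$-cocycle $c$ factoring through $\Gal(F/E)$, split $c$ to get a genuine homomorphism whose restriction to $G_F$ is a twist of $\rho_{A,\lambda}$. Your remarks on the symplectic structure, the similitude normalization, and the $\lambda$-independence of the isogenies (hence of $c$ and $\chi$) all match the paper in spirit.

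The gap is in your justification for splitting $c$. You assert that ``since $c$ takes values in a finite subgroup of $\bar{K}_\lambda^\times$, its cohomology class becomes trivial after inflation from $\Gal(F/E)$ to $G_E$.'' That is not how inflation behaves: the inflation map $H^2(\Gal(F/E),\mu_n)\to H^2(G_E,\mu_n)$ is not zero in general (in fact with $H^1(G_F,\mu_n)^{\Gal(F/E)}$ nontrivial it is typically injective), so the finiteness of the coefficient group and the fact that $c$ is inflated from a finite quotient do not by themselves kill the class. The paper does not attempt this; it invokes the theorem of Tate that the continuous cohomology $H^2(G_E,\bar{K}^\times)$ vanishes for a number field $E$ (with $\bar{K}^\times$ discrete and trivial action), which is the standard tool for lifting projective Galois representations. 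Once one uses Tate's theorem, the splitting $1$-cochain can be taken with values in some $\mu_N$ (because $c$ is locally constant with finite image and $H^2(G_E,\bbQ/\bbZ)=0$), which is what makes the twist finite-order.

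A second, smaller confusion: you call the object produced by the splitting ``a finite-order character $\chi:G_E\to\bar{K}_\lambda^\times$ whose coboundary realizes $c^{-1}$.'' A $1$-cochain with nonzero coboundary is by definition not a homomorphism, so this $\chi$ on $G_E$ is not a character. What is true, and what the statement actually needs, is that its restriction to $G_F$ \emph{is} a character, because $c$ restricted to $G_F\times G_F$ vanishes (there $\tilde\rho_\lambda=\rho_{A,\lambda}$ is already a homomorphism). Keeping the distinction between the global $1$-cochain on $G_E$ and the genuine finite-order character it induces on $G_F$ will tighten the argument and bring it in line with the paper's.
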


\begin{proof}
Since $A^\sigma$ is isogenous to $A$, we have isomorphisms denoted by $\psi_\sigma: V_{A^\sigma,\lambda}\to V_{A,\lambda}$ for $\sigma\in \Gal(F/E)$ (and thus well defined for each $\sigma\in G_E$) and we may first define the map $\tilde{\rho}_\lambda:G_E\to \GSp(V_{A,\lambda})$ as 
\[\tilde{\rho}_\lambda(\sigma)=\psi_\sigma\circ\sigma.\]
Note that the map isn't necessarily a homomorphism from $G_E$ to $\GSp(V_{A,\lambda})$, but the composition
\[\tilde{\rho}_\lambda(\sigma)\tilde{\rho}_\lambda(\tau)\tilde{\rho}_\lambda^{-1}(\sigma\tau)\] acts on $V_{A,\lambda}$ as $\psi_\sigma\psi_\tau^\sigma\psi_{\sigma\tau}^{-1}$, which can also be viewed as some $\bar{K}^\times$-valued function $c(\rho,\tau)$ on $G_E\times G_E$. It is easy to check that $c$ is a locally constant 2-cocycle on $G_E$ with value in $\bar{K}^\times$.



This means that there exists a well defined representation of \[\mathrm{P}\bar{\rho}_{\lambda}:G_E\to\mathrm{P}\GSp_4(K_\lambda).\] Now by a theorem of Tate \cite{Patrikis}, we have $H^2(G_E, \bar{K}^\times)$ being trivial and this means that we can find a lift $\rho_{\lambda}:G_E\to\GSp_4(\bar{K}_\lambda)$ such that its restriction to $G_F$ is a twist of the original representation $\rho_{A,\lambda}$ by a finite order character. 
In other words, we can write $\rho_{\lambda}|_{G_F}=\chi\otimes\rho_{A,\lambda}$ such that $\chi$ is a finite order character.

In addition, for a prime $v$ in $E$ that totally splits in $F$, since the Frobenii of primes above $v$ are $G_E$-conjugates of the other, we can deduce that after the twist $\chi$, the characteristic polynomial of $\Frob_{v_i}$ should be the same, thus the coefficients $a_{v_i}$ and $b_{v_i}$ are only dependent on $v$ and we may write the characteristic polynomials as 
$$Q_v(X)=X^4-a_v X^3+b_v X^2-pa_v X+p^2$$ (here $p$ is the residue characteristic) and will have coefficients in some fixed finite extension of $K$. 

\end{proof}

Now we may define the compatible system of $\GSp_4$ representations of $G_F$ over $M$ to be \textit{geometric}, if there exists an abelian variety $A/H$ over some Galois extension $H/F$  with descent data to $F$ such that either 
\begin{itemize}
    \item $A$ is of  real $\GSp_4$ type with $\End^0(A)= M$ and $H^1(A,M_\lambda)=V_{A,\lambda}$ gives the compatible system, or
    \item $A$  is of fake $\GSp_4$ type with $\End^0(A)= D$ for some quaternion algebra $D/M$ and $H^1(A,M_\lambda)=V_{A,\lambda}\oplus V_{A,\lambda}$ for sufficiently large prime $l$ and $\lambda\mid l$, and such that $V_{A,\lambda}$ gives the compatible system. 
\end{itemize} 


Now that we have formulated the correspondence between geometric compatible systems of $\GSp_4$ representations and $\GSp_4$-type abelian varieties, we can define distinguished ordinariness for abelian varieties as the following:

\begin{defn} Let $A$ be a $\GSp_4$-type abelian variety. For a prime $p$ in $\bbQ$ that totally splits in $F$ and $M$, let $\p$ be a prime above $p$ in $M$, we say that $A$ is $\p$-ordinary or $p$-ordinary if the corresponding $\GSp_4$ representations $\rho_{A,\p}$ are $\p$-ordinary or $p$-ordinary.  

We say $A$ is strongly positive ordinary if it is $p$-ordinary for a strongly positive set of primes in $\bbQ$. 

Given $\p$-ordinariness, for a prime $\p\mid p$ in $M$, we say that $A$ is $\p$-distinguished ordinary if $Q_{v}(X)$ has distinct unit roots mod $\p$ for all $v\mid p$.
\end{defn}

An immediate proposition of the above definition of distinguished ordinariness is the following:
\begin{prop}
     If $A$ is $p$-ordinary then the corresponding compatible system $\mathcal{R}_A$ is $p$-ordinary, and if the characteristic polynomial of $\Frob_v$ on $\rho_{A,\lambda}$ is denoted by
\[Q_v(x)=x^4-a_vx^3+b_vx^2-p a_vx+p^2,\] then $\p$-ordinariness is equivalent to $\p\nmid b_v$ for all primes $v\mid p$ in $F$. 

If $A$ is $\p$-distinguished ordinary, then further we have $\p\nmid a_v^2-4b_v$ for all $v\mid p$. 

\end{prop}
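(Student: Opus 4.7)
The first claim is tautological from the definitions: by the definition just given, $A$ is $p$-ordinary precisely when the representations $\rho_{A,\p}$ -- i.e.\ the compatible system $\mathcal{R}_A$ -- are $p$-ordinary. The real content of the proposition is the polynomial characterization of $\p$-ordinariness and of the distinguished property, and my plan is to obtain both by reading off the Newton polygon of $Q_v(x)$ modulo $\p$.

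I would start by reducing the characteristic polynomial; since $\p\mid p$, this gives
\[Q_v(x)\equiv x^2\bigl(x^2-\bar{a}_v x+\bar{b}_v\bigr)\pmod{\p}.\]
The key input from $p$-adic Hodge theory is that for a prime $v\mid p$ at which $A$ has good reduction, $\p$-ordinariness of $\rho_{A,\p}|_{G_{F_v}}$ is equivalent to ordinary reduction of $A$ at $v$, which in turn is equivalent to the Newton polygon of $Q_v$ (with respect to $v_p$) having slopes $(0,0,1,1)$. Given the shape of $Q_v$, whose leading and constant coefficients have valuations $0$ and $2$ respectively and whose $x$-coefficient has valuation at least $1$, the Newton polygon condition collapses to the single inequality $v_p(b_v)=0$, i.e.\ $\p\nmid b_v$.

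For the converse I would invoke Hensel's lemma: once $\bar{b}_v\neq 0$, the factors $x^2$ and $x^2-\bar{a}_v x+\bar{b}_v$ of the reduction are coprime in the residue field, so Hensel lifts the factorization to $Q_v=U\cdot V$ over the ring of integers of a finite extension of $M_\p$, with $U\equiv x^2-\bar{a}_v x+\bar{b}_v\pmod{\p}$ having unit roots and $V\equiv x^2\pmod{\p}$; and since the product of all four roots equals $p^2$ and two of them are units, the other two have valuation exactly $1$, giving the required slope pattern. The distinguished-ordinary statement is then immediate: under $\p$-ordinariness the two unit Frobenius eigenvalues modulo $\p$ are exactly the two roots of $x^2-\bar{a}_v x+\bar{b}_v$ in the residue field, and distinctness of these roots is equivalent to the non-vanishing of the discriminant $\bar{a}_v^2-4\bar{b}_v$, i.e.\ to $\p\nmid a_v^2-4b_v$.

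The only substantive step is the dictionary identifying $\p$-ordinariness of $\rho_{A,\p}|_{G_{F_v}}$ with the ordinary reduction of $A$ at $v$; this is a standard consequence of Dieudonn\'e/crystalline theory -- the unit roots of Frobenius on the Tate module are precisely the unit crystalline Frobenius eigenvalues on the ordinary part -- and everything else is polynomial arithmetic plus Hensel's lemma, which presents no real obstacle.
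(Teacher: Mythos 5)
Your argument is correct in substance; the paper itself states this proposition as ``immediate'' and gives no proof, so there is nothing to compare against beyond the definitions. The Newton-polygon computation is the right way to make the claim precise: with leading coefficient a unit, constant coefficient of $\p$-valuation $2$, $x$-coefficient of valuation $\ge 1$, and middle coefficient $b_v$, the lower convex hull has slopes $(0,0,1,1)$ exactly when $v_\p(b_v)=0$, and the Hensel factorization then identifies the residual unit roots as the roots of $x^2-\bar{a}_vx+\bar{b}_v$, from which the discriminant criterion for distinguishedness follows immediately. Two small remarks. First, where you write that $\p$-ordinariness of $\rho_{A,\p}|_{G_{F_v}}$ is equivalent to ``ordinary reduction of $A$ at $v$,'' the correct phrase is ordinariness of the $\p$-divisible group $A[\p^\infty]$: when $K\ne\Q$, $\p$-ordinariness as defined in the paper concerns only the $\p$-factor of $A[p^\infty]$, not the whole abelian variety, and ordinary reduction of $A$ itself is the strictly stronger condition that every factor be ordinary. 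This does not affect your computation, since $Q_v$ is by construction the characteristic polynomial of Frobenius on precisely that $\p$-piece. Second, in the Hensel paragraph, the inference that the two non-unit roots each have valuation exactly $1$ does not follow merely from their product having valuation $2$; what forces it is the convexity of the Newton polygon (the point $(1,\ge 1)$ lying on or above the segment from $(0,2)$ to $(2,0)$), which you already invoked in the preceding sentence, so that clause is redundant rather than load-bearing.
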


\newpage

\section{Large Image Property}
In this chapter we derive certain large image properties of the compatible system of Galois representations associated to abelian varieties of $\GSp_4$-type, and show that we can find enough \textit{good} primes under certain large image hypotheses. 

\subsection{The Large Image Hypothesis}

We first derive an analogous result to \cite[Theorem 3.5]{Bar}. Notice that the arguments there don't apply directly in our case of real or fake $\GSp_4$ type abelian varieties since $\dim A/[K:\bbQ]=2$ or $4$ are even numbers.  
\begin{lem} (cf. \cite{Bar} Lemma 3.2) \label{largeLie}
Suppose $\mathcal{R}$ is a geometric compatible system of $\GSp_4$ representations of $G_F$ over $M$. Then for sufficiently large prime $l$ that completely splits in $F$ and $M$ and $\lambda\mid l$ the $\lambda$-adic monodromy group has Lie algebra $\mathfrak{sp}_4$ or $\mathfrak{so}_4=\mathfrak{sl}_2\oplus \mathfrak{sl}_2$.
\end{lem}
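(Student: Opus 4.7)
The plan is to classify the possibilities for the derived Lie algebra $\mathfrak{g}_\lambda^{\mathrm{der}}$ of the identity component of the Zariski closure $G_\lambda\subseteq\GSp(V_{A,\lambda})\cong\GSp_4$ of $\rho_{A,\lambda}(G_F)$, where $A$ is a (real or fake) $\GSp_4$-type abelian variety realizing $\mathcal{R}$. First I would invoke Bogomolov's theorem to see that $G_\lambda^\circ$ is reductive, so that $\mathfrak{g}_\lambda^{\mathrm{der}}$ is a semisimple Lie subalgebra of $\mathfrak{sp}_4(\bar{\bbQ}_l)$. I would then enumerate such subalgebras up to conjugacy: the full $\mathfrak{sp}_4$, the block subalgebra $\mathfrak{sp}_2\oplus\mathfrak{sp}_2\cong\mathfrak{sl}_2\oplus\mathfrak{sl}_2$, the principal ($\mathrm{Sym}^3$) copy of $\mathfrak{sl}_2$, the long and short root $\mathfrak{sl}_2$'s, and the zero subalgebra.

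The key tool for eliminating the small options is the Hodge cocharacter. For any place $v\mid l$ in $F$, the crystalline comparison for $H^1(A_{\bar F_v},\bar{\bbQ}_l)$, combined with compatibility with the $\End^0(A)$-action, supplies a cocharacter $\mu_v:\mathbb{G}_{m,\bar{\bbQ}_l}\to G_\lambda^\circ$ whose weights on $V_{A,\lambda}\otimes\bar{\bbQ}_l$ form the multiset $\{0,0,1,1\}$ (since the Hodge filtration on $H^1(A)$ is of type $(0,1)+(1,0)$ and is preserved by the $\End^0(A)$-action, hence descends to the $\lambda$-slice). I would then check case by case: for the principal $\mathfrak{sl}_2$ the cocharacter weights on the standard 4-dim rep are $(3a,a,-a,-3a)$, and no central shift of this multiset equals $\{0,0,1,1\}$; for a root or diagonal $\mathfrak{sl}_2$ the weights take the shape $(a,-a,b,-b)$ with $a,b$ constrained to lie in a single $\mathfrak{sl}_2$-weight string, again incompatible with $\{0,0,1,1\}$; and if $\mathfrak{g}_\lambda^{\mathrm{der}}=0$ then $G_\lambda^\circ$ is a torus, so the image of $\rho_{A,\lambda}$ is virtually abelian, which by Faltings' theorem would force $A$ to have CM, contradicting the standing non-CM hypothesis.

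To finish I would invoke Serre's theorem on the independence of $l$ of the Lie algebra attached to an abelian variety, which ensures that $\mathfrak{g}_\lambda^{\mathrm{der}}$ is constant for all sufficiently large $l$; combined with the hypothesis that $l$ splits completely in both $F$ and $M$ (so that the slice $V_{A,\lambda}$ is defined and genuinely four-dimensional), the classification above delivers the lemma.

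The hardest step will be setting up the Hodge cocharacter cleanly in the fake $\GSp_4$-type case, where $V_{A,\lambda}$ is only a summand of $H^1(A,\bar{\bbQ}_l)$ obtained after extending scalars along $K\otimes\bbQ_l\to\bar{\bbQ}_l$ and breaking the representation into two isomorphic copies. One has to verify that the Hodge cocharacter of $H^1(A)$ descends to each copy with the correct weights $\{0,0,1,1\}$, which should follow from compatibility of the quaternionic $D$-action with both the polarization and the Hodge filtration. Once this is in place, the remaining classification is essentially combinatorial.
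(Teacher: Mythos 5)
Your strategy — Bogomolov reductivity, an enumeration of semisimple subalgebras of $\mathfrak{sp}_4$, and elimination via the Hodge cocharacter — is a genuinely different route from the paper's. The paper invokes Pink's classification of the $\ell$-adic algebraic monodromy of an abelian variety: after passing to an extension where the image is connected, the irreducible $\lambda$-component $\bar{V}_{A,\lambda}$ decomposes as a tensor product of minuscule irreducibles for the simple classical factors of $\mathfrak{g}^{ss}$, and in dimension $4$ the only possibilities are $\mathfrak{sp}_4$ (one factor, type $C_2$) or $\mathfrak{sl}_2\oplus\mathfrak{sl}_2$ (two factors, each $\mathrm{std}$). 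Your elimination of the principal (i.e.\ $\mathrm{Sym}^3$) $\mathfrak{sl}_2$ and of the diagonal $\mathfrak{sl}_2$ by weight arithmetic is correct: the weight multisets $\{3a+c,a+c,-a+c,-3a+c\}$ and $\{2a+c,c,c,-2a+c\}$ can never equal $\{0,0,1,1\}$ with integral $a$, so these two possibilities are excluded without appealing to anything beyond the Hodge--Tate numbers.

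There is a gap, however, in the step that kills the long-root and short-root $\mathfrak{sl}_2$'s. You assert the cocharacter weights take the shape $(a,-a,b,-b)$, but the Hodge cocharacter of $H^1(A)$ has weight sum $2$, not $0$ — it lands in $\GSp_4$, not $\Sp_4$ — so there are central shifts, and the correct forms are $\{a+c_1,-a+c_1,c_2,c_3\}$ (long root, $V_2\oplus\mathrm{triv}^2$) and $\{a+c_1,-a+c_1,a+c_2,-a+c_2\}$ (short root, $V_2^{\oplus 2}$). Both of these \emph{do} match $\{0,0,1,1\}$ when $a=0$, i.e.\ when the cocharacter is trivial on the $\mathfrak{sl}_2$ factor, so weight arithmetic alone does not exclude them. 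You have two clean ways to close this. One is to invoke the absolute irreducibility of $V_{A,\lambda}$, which follows from Faltings together with the standing simplicity and non-CM hypotheses (after passing to the connectedness field, where non-CM still forbids extra endomorphisms); a connected reductive group with a single simple factor $\mathfrak{sl}_2$ acting as $V_2\oplus\mathrm{triv}^2$ or $V_2^{\oplus 2}$ cannot act irreducibly in dimension $4$, since the remaining torus splits the multiplicity space into characters. The other is to cite Pink's theorem (the route the paper takes via \cite{P}) that the algebraic monodromy group is generated by the $G_{\bbQ_l}$-conjugates of the Hodge cocharacter, which forbids the cocharacter from being trivial on a simple factor of $\mathfrak{g}^{ss}$. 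Either repair makes your argument complete; without one of them, the two root $\mathfrak{sl}_2$'s survive.
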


\begin{proof} Throughout the following proof $l$ should be chosen as completely split in $M$ and $F$.
Recall that the $\lambda$-adic monodromy group $G_l^{\mathrm{alg}}$ is the algebraic closure of $\rho_{A,l}\otimes \bbQ_l$ in the group $\GL_{4}/\bbQ_l$, and we write $(G_l^{\mathrm{alg}})'$ to be the derived subgroup.

If we write $\mathfrak{g}^{ss}=\mathrm{Lie}(G_l^\mathrm{alg})'$, and \[V_{A,l}=H^1(A,\bbQ_l)=\bigoplus\limits_{\lambda\mid l}V_{A,\lambda}\] 
where $V_{A,\lambda}=V_{A,l} \otimes K_\lambda$, and let $\bar{V}_{A,\lambda}=V_{A,\lambda}\otimes\bar{\bbQ}_l$. By the relation
\[G_l^\mathrm{alg}\subset\prod_{\lambda\mid l}\GSp(V_{A,\lambda})\cong\prod_{\lambda\mid l}\GSp_4/\bbQ_l\] we have
\[\mathfrak{g}^{ss}\subset\bigoplus\limits_{\lambda\mid l}\mathfrak{sp}_4(V_{A,\lambda})\]

Projecting onto the $\lambda$ component we see that the image of $\mathfrak{g}^{ss}\otimes \bar{\bbQ}_l$ in $\mathfrak{sp}_4(\bar{V}_{A,\lambda})$ is
semisimple, and we can write it as a decomposition $$\bar{V}_{A,\lambda}=E(\omega_1)\otimes_{\bar{\bbQ}_l}\cdots\otimes_{\bar{\bbQ}_l}E(\omega_r), $$
where $E(\omega_i)$ for all $1\le i\le r$ are the irreducible (orthogonal or symplectic) Lie algebra modules of the highest weight $\omega_i$ corresponding to simple Lie algebras $\mathfrak{g}_i$
which are summands of the image 
\[\mathrm{Im}\big(\mathfrak{g}^{ss}\otimes\bar{\bbQ}_l\to\mathfrak{sp}_4(\bar{V}_{A,\lambda})\big)=\bigoplus_{i=1}^r\mathfrak{g}_i.\]

By [\cite{P}, Corollary 5.11] all simple factors of $\mathfrak{g}^{ss}\otimes\bar{\bbQ}_l$ are of classical type A, B, C or D and the weights $\omega_i$ are minimal. Since $\mathrm{dim}_{\bar{\bbQ}_l}\bar{V}_{A,\lambda}=4$, there are only two possible choices of decomposition: Either the product $E(\omega_1)\otimes_{\bar{\bbQ}_l}\cdots\otimes_{\bar{\bbQ}_l}E(\omega_r)$ consists of a single space $E(\omega_1)$ and the $\mathfrak{g}_1$-action on $E(\omega_1)$ is of type $C$ symplectic representation, thus $\bigoplus_{i=1}^r\mathfrak{g}_i=\mathfrak{sp}_4$, or it consists of two spaces $E(\omega_1)\otimes_{\bar{\bbQ}_l}E(\omega_2)$ each of dimension 2, with $\mathfrak{g}_i$ acts as $\mathfrak{sl}_2$ on $E(\omega_i)$, thus  $\bigoplus_{i=1}^r\mathfrak{g}_i=\mathfrak{sl}_2\oplus\mathfrak{sl}_2\cong\mathfrak{so}_4$.


If $A$ is of fake $\GSp_4$ type, then for sufficiently large prime $l$, since the representation $W_{A,l}=H^1(A,\bbQ_l)$ can be decomposed into two identical components $W_{A,l}=V_{A,l}\oplus V_{A,l}$ and thus the original Galois representation $\rho_{l}$ can be decomposed as $\rho_{l}\cong\rho_{A,l}\oplus \rho_{A,l}$ for some $\GL_{2g}$-representation $\rho_{A,l}$, now the monodromy group $G_l^{\mathrm{alg}}$ is the algebraic closure of $\rho_{A,l}\otimes \bbQ_l$ and and $(G_l^{\mathrm{alg}})'$ be the derived subgroup. Also we have \[V_{A,\lambda}=V_{A,l}\otimes_K{K_\lambda} \] thus
\[V_{A,l}=\bigoplus_{\lambda\mid l}V_{A,\lambda}\]

By the exact same argument above on the semisimple part of the Lie algebra in the projection to $\mathfrak{sp}_4(V_{A,\lambda})$ we have the $\lambda$-adic monodromy group has Lie algebra $\mathfrak{sp}_4,$ or $\mathfrak{so}_4$.


\end{proof}

\begin{defn}\label{large}
We say that a geometric compatible system $\mathcal{R}$ of $\GSp_4$ representations (or the corresponding real or fake $\GSp_4$ type abelian variety) has large image property if for sufficiently large $l$ that completely splits in $F$ and $M$ and any $\lambda \mid l$ in $M$ we have the Lie algebra of the $\lambda$-adic monodromy group being $\mathfrak{sp}_4$. 
\end{defn}

\begin{prop} Given large image property, for sufficiently large $l$, the image of the residual representation $\bar{\rho}_{A,\lambda}$ at least contains $\Sp_4(\mathbb{F}_l)$.
\end{prop}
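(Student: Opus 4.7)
The plan is to bootstrap the Lie-algebra-level large image hypothesis through three stages. First I would translate the infinitesimal condition into an openness statement: the Lie algebra of the Zariski closure of $\rho_{A,\lambda}(G_F)$ is $\mathfrak{sp}_4$ together with the similitude line, so the closed $l$-adic Lie subgroup $\rho_{A,\lambda}(G_F) \cap \Sp_4(K_\lambda)$ has Lie algebra $\mathfrak{sp}_4(K_\lambda)$. By the standard dictionary between closed subgroups of an $l$-adic Lie group and their Lie algebras, this intersection is an open subgroup of $\Sp_4(\OO_\lambda)$, hence contains the principal congruence subgroup of level $\lambda^{n(l)}$ for some integer $n(l) \geq 1$.

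Second, I would reduce modulo $\lambda$. Setting $\bar H = \bar\rho_{A,\lambda}(G_F)$ and $\bar H_0 = \bar H \cap \Sp_4(\bbF_l)$, for $l$ exceeding a constant depending only on the rank Nori's theorem applied to the subgroup of $\bar H_0$ generated by order-$l$ elements yields an algebraic subgroup $\mathscr{H} \subset \GL_4/\bbF_l$ whose Lie algebra is the $\bbF_l$-span of the logarithms of those unipotent elements. Order-$l$ elements in the residual image arise from tame inertia at primes of bad reduction of $A$ (for $l$ coprime to the residue characteristics of those primes), and compatibility of the system controls the level $n(l)$ uniformly so that the Lie algebra so generated is all of $\mathfrak{sp}_4(\bbF_l)$; in particular $\mathscr{H}^\circ = \Sp_4$.

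Third, I would apply the Aschbacher/Mitchell classification of maximal subgroups of $\Sp_4(\bbF_l)$. Zariski density in $\Sp_4$ excludes containment in any parabolic, in any stabilizer of a direct-sum decomposition into two $2$-dimensional subspaces, or in any subfield subgroup; for $l$ large it also excludes the extraspecial normalizers and the finite list of exceptional subgroups. Combined with the quasi-simplicity of $\Sp_4(\bbF_l)$ for $l \geq 5$, this will force $\bar H_0 \supseteq \Sp_4(\bbF_l)$, which is the claim.

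The main obstacle will be the second stage. An abstract open subgroup of $\Sp_4(\OO_\lambda)$ can in principle sit inside a principal congruence subgroup of arbitrarily high level and reduce to something trivial modulo $\lambda$; what rules this out in our setting is the combination of compatibility (so that the level $n(l)$ does not grow with $l$) and the existence of order-$l$ elements coming from inertia at bad primes, which Nori's theorem then assembles into the full $\mathfrak{sp}_4(\bbF_l)$. Making the uniformity precise is exactly what pins down the threshold beyond which ``sufficiently large $l$'' suffices.
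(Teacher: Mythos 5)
Your three-stage skeleton (openness of the $\lambda$-adic image, reduction mod $\lambda$ via Nori, and elimination of maximal subgroups via the Aschbacher/Mitchell classification) is the right general picture, and stage 3 is exactly the standard device here. However, stage 2 has a genuine gap, and you have put your finger on it without actually closing it.

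The claim that order-$l$ elements arising from tame inertia at primes of bad reduction already span $\mathfrak{sp}_4(\bbF_l)$ (so that Nori's algebraic hull is $\Sp_4$) is not justified and is false in general. If $A$ has good reduction everywhere, inertia at primes $v \nmid l$ contributes nothing; and even at a prime of semistable bad reduction, inertia contributes at best a handful of unipotent elements whose logarithms span a single nilpotent line or a copy of the nilradical of a parabolic, not all of $\mathfrak{sp}_4$. Having one unipotent plus irreducibility is already enough in the $\SL_2$ setting (Serre's Tate-curve argument), but in $\Sp_4$ there are absolutely irreducible proper subgroups containing unipotents — $\SL_2 \times \SL_2$ on the tensor square, $\SL_2$ via $\mathrm{Sym}^3$ — so a single unipotent does not force containment of $\Sp_4(\bbF_l)$. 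Likewise, the assertion that ``compatibility of the system controls the level $n(l)$ uniformly'' is not a consequence of weak compatibility: compatibility constrains the characteristic polynomials of Frobenius, not the congruence level of the image as a subgroup of $\Sp_4(\OO_\lambda)$. Without an independent uniformity input, openness at each $l$ is perfectly consistent with the mod-$\lambda$ image being tiny.

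The proof the paper actually invokes (transferring Lemmas 3.2--3.5 of the cited reference, once the Lie algebra is known to be $\mathfrak{sp}_4$) replaces your stage 2 by a compatibility-across-$l$ argument: if for infinitely many $l$ the residual image were contained in one of the ``small'' maximal subgroups of $\Sp_4(\bbF_l)$ from the classification, the resulting constraint on the reduction of the Frobenius characteristic polynomials $Q_v(X)$, uniform in $l$, would force a corresponding algebraic relation in characteristic zero incompatible with full $\mathfrak{sp}_4$ monodromy (this is also the mechanism later used in Lemma~\ref{nolinear}). That is what rules out the degeneration you are worried about, and it does not require any input from inertia. If you want to salvage your route, you would need to replace the inertia argument by a Frobenius-torus or Chebotarev argument supplying a large torus together with regular unipotents inside the residual image, which is substantially more work than what you have sketched.
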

\begin{proof}
Note that Lemma \ref{largeLie} proved that the $h=2$ case in Lemma 3.2 in \cite{Bar} holds when $\{\rho_{A,\lambda}\}$ has large image property, and the same argument from Lemma 3.2-Lemma 3.5 implies that the $h=2$ case of Lemma 3.5 also holds for real or fake abelian varieties of $\GSp_4$ type that has large image property.

So if we consider the residual representation \[\bar{\rho}_{A,l}=\prod\limits_{\lambda\mid l}\bar{\rho}_{A,\lambda},\]
then for sufficiently large $l$ we have
\[\left[\bar{\rho}_{A,l}(G_F),\bar{\rho}_{A,l}(G_F)\right]=\prod\limits_{\lambda\mid l}\Sp_{4}(\bbF_l),\]
or
\[[\bar{\rho}_{A,\lambda}(G_F),\bar{\rho}_{A,\lambda}(G_F)]=\Sp_4(\bbF_l).\]

This means that for sufficiently large $l$ and $\lambda\mid l$, The residual representations $$\bar{\rho}_{A,\lambda}:G_F\to\GSp_4(\bbF_l)$$ will have image at least containing $\Sp_4(\bbF_l)$.
\end{proof}

We now define a residual representation to be $\textit{vast and tidy}$ in the sense of \cite{BCGP} Definition 7.5.6 and Definition 7.5.11. We also have the following lemma that shows the implication from large image property to vast and tidiness for $\GSp_4$ residual representations: 

\begin{lem} (cf. \cite{BCGP} Lemma 7.5.15)\label{vt} For $p>3$, if $\bar{\rho}: G_F\to \GSp_4(\bbF_p)$ that has image at least containing $\Sp_4$ and with similitude character
$\bar{\epsilon}^{-1}$, then $\bar{\rho}$ is vast and tidy.
\end{lem}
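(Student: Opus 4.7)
The plan is to verify, in turn, each of the numbered conditions in \cite[Definition 7.5.6]{BCGP} (vast) and \cite[Definition 7.5.11]{BCGP} (tidy), and to observe that our hypotheses are identical to those of \cite[Lemma 7.5.15]{BCGP}, so the argument proceeds in essentially the same way. The three ingredients I will repeatedly exploit are: (i) for $p>3$, the group $\Sp_4(\bbF_p)$ is quasi-simple with center $\{\pm I\}$, so in particular equals its own commutator subgroup and acts absolutely irreducibly on its defining $4$-dimensional representation; (ii) the similitude character being $\bar{\epsilon}^{-1}$ pins down the relationship between the image and the cyclotomic character, which is what lets one control $\bar{\rho}|_{G_{F(\zeta_p)}}$; and (iii) $\Sp_4(\bbF_p)$ contains maximal tori of every type (split, products of quadratic tori, and anisotropic tori coming from quartic extensions), so regular semisimple elements with any desired cycle structure on eigenvalues are easy to produce.

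First I would show that $\bar{\rho}$ has \emph{enormous image} in the sense needed: because $\Sp_4(\bbF_p)$ is perfect for $p>3$ and contains the scalar $-I$, the restriction of $\bar{\rho}$ to $G_{F(\zeta_p)}$ still has image containing $\Sp_4(\bbF_p)$, hence is absolutely irreducible. The required cohomological vanishing statements --- for example $H^1(\bar{\rho}(G_F)/\{\pm I\}, \mathrm{ad}^0 \bar\rho)=0$ and the analogous statement after adjoining $\zeta_p$ --- then follow from standard computations of group cohomology of $\Sp_4(\bbF_p)$ or $\mathrm{PSp}_4(\bbF_p)$ acting on its adjoint representation, exactly as invoked in \cite[\S7.5]{BCGP}; the hypothesis $p>3$ is what ensures these cohomology groups vanish.

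Next I would produce the explicit elements required by both definitions. For tidiness, one needs elements $g$ in the image that are regular semisimple with four distinct eigenvalues in $\bar{\bbF}_p$ and such that the centralizer in $\GSp_4(\bbF_p)$ is no larger than the obvious torus; these are found by taking preimages of elements in an anisotropic torus of $\Sp_4(\bbF_p)$, which exists because quartic extensions of $\bbF_p$ exist. For vastness, one further needs elements whose action on $\mathrm{ad}^0 \bar\rho$ has a controlled fixed subspace and which scale the similitude factor correctly; these can be produced using split and semi-split tori inside $\Sp_4(\bbF_p)$, together with the fact that the similitude character on such elements takes values in $\bbF_p^\times$ that are determined by $\bar\epsilon$.

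The main obstacle, as in \cite{BCGP}, is the bookkeeping when $\bar{\rho}(G_F)$ is strictly larger than $\Sp_4(\bbF_p)$: the quotient embeds into $\bbF_p^\times$ via the similitude character, and one must check that the elements required by the definitions can be found inside the normal subgroup $\Sp_4(\bbF_p)$ itself rather than only in some coset. This is precisely where the identification of the similitude with $\bar{\epsilon}^{-1}$ is used --- it forces the coset structure of $\bar{\rho}(G_F)$ above $\Sp_4(\bbF_p)$ to be the image of $G_F$ under $\bar{\epsilon}^{-1}$, so restriction to $G_{F(\zeta_p)}$ lands inside $\Sp_4(\bbF_p)$ and the required elements can be produced from the torus structure there. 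Once this reduction is in place, the remainder of the verification is a direct transcription of \cite[Lemma 7.5.15]{BCGP}.
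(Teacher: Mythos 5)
Your handling of the vastness half is in the same spirit as the paper's: the essential observation that $\bar{\rho}|_{G_{F(\zeta_{p^N})}}$ still has image $\Sp_4(\bbF_p)$ because $\Sp_4(\bbF_p)$ is perfect for $p>3$ and $\Gal(F(\zeta_{p^N})/F)$ is abelian, after which one invokes the verification carried out in \cite[Lemma 7.5.15]{BCGP}. That part is fine.

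Your treatment of tidiness, however, both diverges from the paper and has a genuine gap. You propose to find the required element inside $\Sp_4(\bbF_p)$ itself, by taking a regular semisimple element of an anisotropic torus, and you frame the entire problem as showing ``the elements required by the definitions can be found inside the normal subgroup $\Sp_4(\bbF_p)$ rather than only in some coset.'' This is the wrong reduction for tidiness. Elements of $\Sp_4(\bbF_p)$ have $\nu(g)=1$, and the tidy condition of \cite[Definition 7.5.11]{BCGP} (and the mechanism of \cite[Lemma 7.5.12]{BCGP}) turns essentially on having an element with \emph{nontrivial} similitude, so no element of $\Sp_4(\bbF_p)$ can serve. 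The point the paper actually makes is the opposite of your reduction: because $F$ is totally real, $\bar{\epsilon}^{-1}: G_F\to\bbF_p^\times$ is surjective, so the similitude of $\bar{\rho}$ is surjective; combined with $\Sp_4(\bbF_p)\subset\mathrm{im}\,\bar{\rho}$, this forces $\mathrm{im}\,\bar{\rho}=\GSp_4(\bbF_p)$, whose center is the full group of scalars of order $p-1>2$, and then \cite[Lemma 7.5.12]{BCGP} gives tidiness immediately. In other words, the similitude hypothesis is used precisely to get \emph{outside} $\Sp_4(\bbF_p)$ (namely into the central scalars), not to reduce everything into it. You should replace the anisotropic-torus construction with this one-line scalar argument; as written your argument does not produce a tidy element.
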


\begin{proof}
    For vastness notice that we still have the image of $\bar{\rho}|_{G_{F(\zeta_p^N)}}$ equal to $\Sp_4(\bbF_p)$ for all $N\in\bbZ^+$ (here $\zeta_p$ is the $p$-th root of unity). The rest follows from the arguments in \cite{BCGP} Lemma 7.5.15. 

    For $p>3$, tidiness follows from the fact that the center of the image of $\bar{\rho}$ (since the similitude character is $\bar{\epsilon}^{-1}$) will have order $p-1$ and \cite{BCGP} Lemma 7.5.12.
\end{proof}

Apart from this lemma we will also need the large image property for certain induced representation, which we will use in chapter 7. Namely, we have:

\begin{lem} (\cite{BCGP} Lemma 7.5.22)\label{vtinduced} Suppose that $p>3$, ${F_1}/F$ is a quadratic extension such that $F_1$ is unramified at $p$, and $\bar{r}:G_{F_1}\to \GL_2(\bbF_p)$ restricted to $G_{F_1(\zeta_p)}$ has
image $\mathrm{SL}_2(\bbF_p)$. Choose $\sigma\in G_F\setminus G_{F_1}$, and assume that the determinants
$\mathrm{det}\, \bar{r}^\sigma = \mathrm{det}\, \bar{r}$ are equal to $\bar{\epsilon}^{-1}$ but the projective images $\mathrm{Proj}\,\bar {r}^\sigma\ncong\mathrm{Proj}\,\bar{r}$ are distinct. Let $\bar{\rho}:= \mathrm{Ind}^{G_F}_{G_{F_1}}\bar{r} : G_F\to \GSp_4(\bbF_p)$, then $\bar{\rho}$ is vast and tidy.

\end{lem}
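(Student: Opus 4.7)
The plan is to reduce to a direct appeal to \cite{BCGP}~Lemma~7.5.22, but before doing so I would verify that the hypotheses of that lemma match ours and sketch the structural facts that drive it. First, I would identify the image of $\bar{\rho}|_{G_{F_1}}$: since $\bar{\rho}|_{G_{F_1}} \cong \bar{r} \oplus \bar{r}^{\sigma}$ (with respect to a suitable basis compatible with our choice of $J$), the joint image lands in $\GL_2(\bbF_p)\times \GL_2(\bbF_p)$ sitting inside $\GSp_4(\bbF_p)$ as block-anti-diagonal stabilisers of the symplectic form. Further restricting to $G_{F_1(\zeta_p)}$, both $\bar{r}$ and $\bar{r}^\sigma$ have image $\mathrm{SL}_2(\bbF_p)$.

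Next I would apply Goursat's lemma to this joint image. A proper subgroup of $\mathrm{SL}_2(\bbF_p)\times \mathrm{SL}_2(\bbF_p)$ projecting surjectively onto both factors (modulo the common centre $\{\pm 1\}$) must be the graph of an isomorphism of $\mathrm{PSL}_2(\bbF_p)$. Since all automorphisms of $\mathrm{PSL}_2(\bbF_p)$ are of the form induced by $\mathrm{PGL}_2(\bbF_p)$ conjugation composed with a field automorphism (which is trivial for the prime field), such a graph would force $\mathrm{Proj}\,\bar{r}^\sigma \cong \mathrm{Proj}\,\bar{r}$, contradicting the assumption. Therefore the image of $\bar{\rho}|_{G_{F_1(\zeta_p)}}$ contains $\mathrm{SL}_2(\bbF_p)\times \mathrm{SL}_2(\bbF_p)$, which is precisely the spin double cover of $\mathrm{SO}_4(\bbF_p)$ sitting inside $\Sp_4(\bbF_p)$. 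The whole image of $\bar{\rho}|_{G_F}$ then contains this group together with a swap coming from any $\tau\in G_F\setminus G_{F_1}$, giving a concrete handle on the image.

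With the image in hand, the plan for vastness is to verify the conditions of \cite{BCGP}~Definition~7.5.6 in parallel with the argument of Lemma~\ref{vt}. The crucial observations are (i) the image of $\bar{\rho}|_{G_{F(\zeta_{p^N})}}$ is unchanged for all $N$, since $\bbF_p^\times$-valued cyclotomic twisting does not affect the $\mathrm{SL}_2\times \mathrm{SL}_2$ component and $\sigma$ acts non-trivially, (ii) the adjoint representation decomposes under the $\mathrm{SL}_2\times \mathrm{SL}_2$-action into pieces on which the fixed subspaces vanish, which rules out spurious invariants in $\mathrm{ad}^0\bar{\rho}$ and its twists, and (iii) one can locate regular semisimple ``good'' elements of the required kind inside the diagonal torus of $\mathrm{SL}_2\times \mathrm{SL}_2$. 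Each of these is exactly the verification performed in \cite{BCGP}~Lemma~7.5.22, so I would cite their argument for the bookkeeping rather than redo it. Tidiness, as in Lemma~\ref{vt}, follows once $p>3$ from the similitude character being $\bar{\epsilon}^{-1}$: the centre of the image has order $p-1$ and \cite{BCGP}~Lemma~7.5.12 applies verbatim.

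The main obstacle is the Goursat step, which requires the hypothesis $\mathrm{Proj}\,\bar{r}^\sigma\not\cong\mathrm{Proj}\,\bar{r}$ to be used in precisely the right way (distinguishing graph subgroups from the full product), and the careful tracking of how the swap by $\sigma$ interacts with the symplectic form under our specific $J$. Beyond that, the verification of vastness reduces to invoking the block-structured image computations already done in \cite{BCGP}, so I do not anticipate further substantive difficulty.
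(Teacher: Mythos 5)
The paper states this lemma as a direct quotation of \cite{BCGP}~Lemma~7.5.22 and supplies no proof of its own, so there is no in-paper argument to compare against. Your sketch is consistent with that: the Goursat step correctly pins down the image of $\bar{\rho}|_{G_{F_1(\zeta_p)}}$ as all of $\mathrm{SL}_2(\bbF_p)\times\mathrm{SL}_2(\bbF_p)$ (using the hypothesis $\mathrm{Proj}\,\bar r^\sigma\not\cong\mathrm{Proj}\,\bar r$ exactly where needed), and your deferral to \cite{BCGP} for the vastness bookkeeping and to \cite{BCGP}~Lemma~7.5.12 for tidiness parallels how the paper treats the non-induced case in Lemma~\ref{vt}.
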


\subsection{Distinguished Ordinary Primes}

\begin{lem} (cf. Lemma 4.2\cite{Bloch})\label{nolinear}
For a $\GSp_4$-type abelian variety $A/F$ that satisfies the large image property, given the characteristic polynomial of $\Frob_v$ for a prime $v$ with residue characteristic $p$,   \[Q_v(X)=X^4-a_v X^3+b_v X^2-p\zeta a_v X+p^2\zeta^2\] with coefficients in $K$ and $\zeta$ a root of unity with uniformly bounded order, then there is no linear relation between $p,\,a_v^2,\,b_v$ that can hold for a set of primes in $F$ with positive density.  
\end{lem}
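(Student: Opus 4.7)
The plan is to translate the putative linear relation into an algebraic equation on $\GSp_4$ and then apply the large image hypothesis together with an $l$-adic Chebotarev argument to conclude that any nontrivial algebraic equation can hold only on a density-zero set of Frobenius elements. After restricting to the positive-density subset of primes $v$ at which the finite-order character $\chi$ satisfies $\chi(\Frob_v)=1$ (equivalently, $v$ splits completely in the fixed field of $\ker\chi$), I may assume $\zeta=1$, so that $\nu(\rho_{A,\lambda}(\Frob_v))=p$, $\mathrm{tr}(\rho_{A,\lambda}(\Frob_v))=a_v$, and $\mathrm{tr}\bigl(\rho_{A,\lambda}(\Frob_v)\mid\wedge^2\bigr)=b_v$.

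Suppose for contradiction that there exist $c_0,c_1,c_2,c_3\in\bar{K}$ with $(c_1,c_2,c_3)\neq 0$ such that $c_1 p + c_2 a_v^2 + c_3 b_v = c_0$ for all $v$ in a positive-density set $\Sigma$. Consider the regular function
\[ f(g) := c_1\nu(g) + c_2\mathrm{tr}(g)^2 + c_3\mathrm{tr}\bigl(g\mid\wedge^2 V\bigr) - c_0 \]
on the algebraic group $\GSp_4$, where $V$ denotes the standard $4$-dimensional representation. Parametrizing the eigenvalues of regular semisimple elements as $(\alpha,\beta,\nu/\beta,\nu/\alpha)$ and evaluating $f$ on diagonal elements first with $\alpha=\beta=1$ (varying $\nu$) and then with $\alpha=\nu=1$ (varying $\beta$), one produces enough independent identities to force $c_0=c_1=c_2=c_3=0$. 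Thus $Z:=\{f=0\}$ is a proper Zariski-closed subvariety of $\GSp_4$.

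Now pick a sufficiently large rational prime $l$, split in both $F$ and $M$, for which the large image property holds. Because the similitude of $\rho_{A,\lambda}$ is the inverse cyclotomic character, whose image is open in $\bbZ_l^\times$, the $\mathfrak{sp}_4$-conclusion on the derived part of the monodromy together with this openness of the similitude force $H:=\rho_{A,\lambda}(G_F)$ to be an open compact subgroup of $\GSp_4(\bbZ_l)$ (up to conjugation). For every $v\in\Sigma$ away from $l$ and the ramified set, $\rho_{A,\lambda}(\Frob_v)\in Z(\bar{\bbQ}_l)\cap H$. By Serre's $l$-adic Chebotarev density theorem applied to $H$, the Frobenius conjugacy classes are equidistributed in $H$ with respect to Haar measure; since $Z$ has positive codimension in $\GSp_4$, the intersection $Z(\bar{\bbQ}_l)\cap H$ has Haar measure zero, so the density of $v\in\Sigma$ must be zero, contradicting the hypothesis.

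The main obstacle is the nonvanishing check for $f$ on $\GSp_4$: the eigenvalue constraint $\alpha_i\alpha_{i'}=\nu$ means that certain naive specializations collapse, so one must choose specializations with care in order to separate the four regular functions $1,\nu,\mathrm{tr}^2,\mathrm{tr}(\,\cdot\mid\wedge^2)$ on $\GSp_4$. A secondary subtlety is confirming that $H$ is open in all of $\GSp_4(\bbZ_l)$, and not merely in $\Sp_4(\bbZ_l)$; this again combines the large image assumption on the derived monodromy with the openness of the cyclotomic similitude.
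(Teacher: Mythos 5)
Your proposal follows essentially the same strategy as the paper's proof: interpret $p$, $a_v^2$, $b_v$ as conjugation-invariant regular functions on $\GSp_4$ (the similitude, the squared trace, and the trace on $\wedge^2 V$), use the large image hypothesis to make the $\lambda$-adic image open---hence of positive Haar measure---in $\GSp_4(\mathcal{O}_{K_\lambda})$, and then deduce via equidistribution that a nontrivial algebraic relation can only account for a density-zero set of Frobenii. You supply welcome detail where the paper is terse: in particular the explicit specializations $\alpha=\beta=1$ (varying $\nu$) and $\alpha=\nu=1$ (varying $\beta$), which indeed force $c_0=c_1=c_2=c_3=0$ and thereby confirm that $1,\,\nu,\,\mathrm{tr}^2,\,\mathrm{tr}(\cdot\mid\wedge^2)$ are linearly independent as regular functions on $\GSp_4$---a step the paper simply asserts as ``not possible.'' Your appeal to Serre's $l$-adic Chebotarev and the observation that a proper, conjugation-invariant, Zariski-closed subset meets an open compact subgroup in a Haar-null set is the precise form of what the paper gestures at with ``a set of positive measure, hence for all.''

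One small correction is needed in your preliminary reduction on $\zeta$. You propose to ``restrict to the positive-density subset of primes at which $\chi(\Frob_v)=1$.'' While the set $\{v:\chi(\Frob_v)=1\}$ does have positive density, the hypothesis is that the relation holds on some fixed positive-density set $\Sigma$, and $\Sigma\cap\{\chi(\Frob_v)=1\}$ could a priori be empty or of density zero. The correct move is to partition $\Sigma$ into the finitely many subsets $\Sigma_{\zeta_0}$ on which $\chi(\Frob_v)=\zeta_0$ takes a fixed value (finitely many because $\chi$ has finite order), and choose one of positive density. On that piece the similitude equals $p\zeta_0$, so $p=\nu/\zeta_0$, and your regular function becomes $(c_1/\zeta_0)\nu+c_2\mathrm{tr}(g)^2+c_3\mathrm{tr}(g\mid\wedge^2)-c_0$, to which your nonvanishing check applies verbatim. (Note also that the openness of the similitude image in $\mathbb{Z}_l^\times$ survives the finite-order twist by $\chi$, so your openness argument for $H$ is unaffected.) With this fix, the argument is sound and matches the paper's route.
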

\begin{proof}
Note that $p$, $a_v^2$ and $b_v$ are respectively the trace of $\Frob_v$ on $K_\lambda(1),\,V_{A,\lambda}\otimes V_{A,\lambda},$ and $\wedge^2 V_{A,\lambda}$. If there were such a relation, we would have a representation $W$ built out of copies of the above three spaces such that $\Frob_v$ has zero trace for a set of primes with positive density. 

From Lemma \ref{largeLie}, we know that for sufficiently large $l$ and $\lambda\mid l$, the image of the residual representation $\bar{\rho}_{A,\lambda}$ contains at least $\Sp_4(K_{(\lambda)})$, so the image of $G_F$ on $\GL(V_{A,\lambda})$ is at least a subset of $\GSp_4(\OO_{K_\lambda})$ with positive measure. Thus a corresponding relation must hold for elements in $\GSp_4$ on a set of positive measure. Namely, if we write the eigenvalue of an element in $\GSp_4(\OO_{K_\lambda})$ as $\alpha,\,\beta,\,p\zeta\beta^{-1},\,p\zeta\alpha^{-1}$, then there would exist a linear relation between
\[p,\,(\alpha+\beta+p\zeta\alpha^{-1}+p\zeta\beta^{-1})^2,\,\text{ and }\]
\[2p\zeta+\alpha\beta+p\zeta\alpha^{-1}\beta+p\zeta\alpha\beta^{-1}+p^2\zeta^2\alpha^{-1}\beta^{-1}\]
that holds on a set with positive measure, hence for all $\alpha,\,\beta,\,p$, which is not possible. 

\end{proof}
\vskip 1 em

\begin{defn}
We define $p$ a good prime in $\bbQ$ if it satisfies the following properties:
\begin{enumerate}
    \item[(1)] $p$ totally splits in $K$ and $F$, and unramified in $D$ if applicable.

    \item[(2)] There exists a prime $\p\mid p$ of $K$ such that $A$ is $\p$-distinguished ordinary. 
    
    \item[(3)] For all $v\mid p$ in $F$, The residual representations $\bar{\rho}_{A,\p}:G_{F_v}\to\GSp_4(\bbF_p)$ at least contain $\Sp_4(\bbF_p)$. 

\end{enumerate}
\end{defn}
\vskip 1 em

\begin{lem}\label{suff}
Let $A/F$ be an abelian surface with $\End(A)=\bbZ$, or a fake abelian surface with $\End(A)$ an order in a quaternion algebra $D$ over $\bbQ$. If $A$ satisfies the large image property, then the good primes have relative density one in the set of primes that splits totally in $F$. 
\end{lem}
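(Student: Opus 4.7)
The plan is to verify each of the three defining conditions of a good prime on a density-one subset of the primes of $\bbQ$ that split completely in $F$. Since $K=\bbQ$ in both the real and fake cases, the only candidate $\p\mid p$ in (2) is $p$ itself, which simplifies matters. Condition (1) excludes only a finite set (primes ramified in $D$ in the fake case, together with the primes of bad reduction of $A$) and therefore holds with density one. Condition (3), read as concerning the global image of $\bar{\rho}_{A,p}\colon G_F\to\GSp_4(\bbF_p)$ (the only reading consistent with the ordinariness required by (2)), follows directly from the large image hypothesis, since by Definition~\ref{large} and the proposition following it the image contains $\Sp_4(\bbF_p)$ for all sufficiently large such $p$.

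The substantive step is condition (2), the $p$-distinguished ordinariness. Writing the Frobenius polynomial at $v\mid p$ as
\[
Q_v(X)=X^4-a_v X^3+b_v X^2-p a_v X+p^2,
\]
and reducing modulo $p$ to $X^2(X^2-a_v X+b_v)$, I see that ordinariness at $v$ amounts to $p\nmid b_v$ and distinguished ordinariness to the additional $p\nmid (a_v^2-4b_v)$. The Weil bounds $|a_v|\le 4\sqrt{p}$ and $|b_v|\le 6p$ give $|a_v^2-4b_v|\le 40p$, so the congruence $p\mid b_v$ is equivalent to one of the finitely many integral equations $b_v=kp$ with $|k|\le 6$, and likewise $p\mid (a_v^2-4b_v)$ is equivalent to $a_v^2-4b_v=mp$ for some $|m|\le 40$. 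Each such equation is a non-trivial $\bbZ$-linear relation among $p$, $a_v^2$ and $b_v$, so Lemma~\ref{nolinear} forces the set of primes $v$ of $F$ satisfying it to have density zero. Taking the finite union over the bounded ranges of $k$ and $m$, the set of bad primes $v$ has density zero, and hence so does the set of bad split primes $p$ of $\bbQ$ (since at most $[F:\bbQ]$ primes $v$ lie over each such $p$).

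The main obstacle I foresee is ensuring that Lemma~\ref{nolinear}, as stated, genuinely produces a density-zero set for each specific relation in play, rather than merely a non-positive-density set. This should be built into the equidistribution argument underlying that lemma: the locus where a given non-trivial linear relation holds is the pull-back of a proper Zariski-closed condition on $\GSp_4$-conjugacy classes, which has zero Haar measure under the equidistribution guaranteed by the large image hypothesis, and hence zero Chebotarev density. A secondary check, relevant only in the fake case, is that even though $\rho_{A,\lambda}$ may a priori be defined over a quadratic extension of $\bbQ$, the coefficients $a_v$, $b_v$ still lie in $\bbZ$ because the full characteristic polynomial $Q_v=Q_{A,v}^2$ has rational coefficients; this preserves the integral Weil bounds used above.
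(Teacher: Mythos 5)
Your proposal is correct and takes essentially the same route as the paper: both arguments isolate condition~(2) as the substantive point, use the Weil bounds on $|a_v|$ and $|b_v - 2p|$ to reduce $p\mid b_v$ and $p\mid(a_v^2-4b_v)$ to finitely many exact relations of the form $b_v=kp$ and $a_v^2-4b_v=mp$, and then invoke Lemma~\ref{nolinear} to kill each relation on a density-zero set; the paper simply treats (1) and (3) as immediate and cites \cite{BCGP} Lemma~9.2.5 for the template. Your caveat about ``zero density'' versus ``not positive density'' is a fair pedantic point (the paper also elides it), but as you note it is resolved by the Chebotarev/Haar-measure content underlying Lemma~\ref{nolinear}, so there is no genuine gap.
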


\begin{proof}
This is essentially Lemma 9.2.5 of \cite{BCGP}. 
    
For primes $p$ that splits totally in $F$ and $v\mid p$ in $F$, since each root $\pi$ of the characteristic polynomial satisfies that for any embedding $\psi:\bar{\bbQ}\hookrightarrow \bbC$, $|\psi(\pi)|=\sqrt{p}$, we have $|\psi(a_v)|\le 4\sqrt{p}$ and $|\psi(b_v)-2p|\le 4p$ for every $\psi$, thus $|\psi(a_v^2-4b_v)|\le 40p$ for every $\psi$. 

Now Lemma \ref{nolinear} shows that for each integer $c$ such that $|c|\le 6$, the relation $b_v=cp$ can only hold for a set of primes with zero density. This means that the primes such that $p\nmid b_v$, or the ordinary primes, will have relative density one. 

Similarly, for each integer $|c|\le 40$, the relation $a_v^2-4b_v=cp$ can only hold for a set of primes with zero density. This means that the distinguished primes will also have relative density one and this proves the lemma.  
\end{proof}

The previous lemma suggests that there are a sufficient amount of good primes for abelian varieties of $\GSp_4$ type when the coefficient field $K=\bbQ$. For general $K$, we have the following result: 

\begin{lem}\label{suff1}
Let $A/F$ be a strongly positive ordinary abelian variety of $\GSp_4$ type with descent data to $\bbQ$ and has large image property, then the good primes have positive relative density in the set of primes that splits totally in $K$ and $F$. 
\end{lem}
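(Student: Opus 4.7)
The plan is to adapt the proof of Lemma \ref{suff} to the general coefficient field setting, using the descent data to $\bbQ$ to collapse the per-$v$ data into a single invariant, and using strongly positive ordinariness in place of the density-one ordinariness that was available in the $K=\bbQ$ case.

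First I would isolate a positive-density base set $T$. By the strongly positive ordinary assumption, the set of $p$-ordinary primes has positive density in every finite extension of $\bbQ$, in particular in $KF$; translating back, the set $T$ of primes $p$ that split totally in both $K$ and $F$, are unramified in $D$ (in the fake case), are sufficiently large so that the large image property delivers $\Sp_4(\bbF_p)\subseteq \mathrm{Im}(\bar\rho_{A,\p})$ for every $\p\mid p$, and are $p$-ordinary, has positive density. Conditions (1) and (3) in the definition of a good prime hold throughout $T$; only condition (2), the existence of some $\p\mid p$ with $\p$-distinguished ordinariness, remains to be established on a positive density subset.

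Second, I would use the descent data to collapse the $v$-dependence. By the descent data proposition, for any $p\in T$ the characteristic polynomial of $\Frob_v$ is independent of the choice of $v\mid p$ in $F$, and takes the form $Q_p(X)=X^4-a_pX^3+b_pX^2-pa_pX+p^2$ with $a_p,b_p$ in (a fixed finite extension of) $K$. Hence the statement that $A$ fails to be $\p$-distinguished ordinary for every $\p\mid p$ reduces to the single condition $a_p^2-4b_p\in p\OO_K$, since $\bigcap_{\p\mid p}\p=p\OO_K$ when $p$ splits completely in $K$.

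Third, I would bound the possible defects. Because the roots of $Q_p$ are Weil $p$-numbers, every Archimedean absolute value of $a_p^2-4b_p$ is bounded by an absolute constant times $p$. Writing $a_p^2-4b_p=cp$ with $c\in\OO_K$, the element $c$ is bounded at each Archimedean place of $K$, hence lies in a compact region of $K\otimes_\bbQ \bbR$, which meets the lattice $\OO_K$ in a finite set $\{c_1,\dots,c_N\}$. For each fixed $c_i$ the equation $a_p^2-4b_p=c_ip$ is a nontrivial $K$-linear relation among $a_p^2$, $b_p$, $p$; by the large image property and the Haar measure argument of Lemma \ref{nolinear}, any such relation holding on a positive-density set of Frobenii would force it to hold on a positive measure subset of $\GSp_4(\OO_{K_\lambda})$, hence as a polynomial identity, which is absurd. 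Thus each locus has density zero, the union $T\setminus T_{\mathrm{good}}$ does too, and $T_{\mathrm{good}}$ has positive density as required.

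The main obstacle is the collapse step: descent data to $\bbQ$ is the essential input. Without it, the conditions $\p\mid a_v^2-4b_v$ for different $v\mid p$ would be genuinely independent, and the bad set would correspond not to a single defect $c\in\OO_K$ but to a tuple $(c_v)_{v\mid p}$, yielding unboundedly many admissible tuples when $d=[K:\bbQ]$ grows relative to the bound and destroying the finite-union argument. The rest is a routine $K$-linear extension of the argument used to prove Lemma \ref{nolinear} and of the counting in Lemma \ref{suff}.
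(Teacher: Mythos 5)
Your proof is correct and follows essentially the same strategy as the paper: isolate the positive-density set of primes where (1), (3), and ordinariness already hold, use the descent data to $\bbQ$ to make the divisibility condition $\p\mid a_v^2-4b_v$ independent of the choice of $v\mid p$ (so that failure of distinguished ordinariness at every $\p\mid p$ forces $p\mid a_v^2-4b_v$), bound the possible values of $(a_v^2-4b_v)/p$ in $\OO_K$, and kill each of the finitely many bad loci via Lemma \ref{nolinear}. The only small discrepancy is that you assert the characteristic polynomials $Q_v$ are literally independent of $v\mid p$, whereas the paper states the slightly weaker (and safer) fact that the coefficients transform by the finite-order character $\chi$ under $\Gal(F/\bbQ)$ — so $a_v^2-4b_v$ varies by a root of unity, which is enough to make the $\p$-divisibility condition uniform in $v$ for large $p$; this is a cosmetic point and does not affect the argument.
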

\begin{proof}
Lemma \ref{largeLie} shows that primes $p$ that satisfy (3) have relative density one, and thus the primes that satisfy both (3) and $p$-ordinariness have positive relative density. From now on we assume that $p$ splits completely in both $F$ and the  coefficient field $K$. Again we use the fact from the previous lemma that $|\psi(a_v^2-4b_v)|\le 40p$ for every $\psi$. 

Recall that $\p$-distinguished ordinariness is equivalent to that $\p\nmid a_v^2-4b_v$ for all $v\mid p$. But since $A$ is with descent data to $\bbQ$, for $\sigma\in\Gal(F/\bbQ)$ we have $a_v^\sigma=\chi(\sigma)a_v$ and $b_v^\sigma=\chi(\sigma)^2b_v$. Thus for sufficiently large $p$, $\p\mid a_v^2-4b_v$ is equivalent to $\p\mid(a_v^\sigma)^2-4b_v^\sigma$. 

If $A$ is not $\p$-distinguished ordinary for any $\p$, then for each $\p$, $\p\mid a_v^2-4b_v$ for at least one $v\mid p$ thus for all $v\mid p$ due to the above deduction. Therefore for all $v\mid p$, $a_v^2-4b_v$ must be divisible by every prime above $p$, which means that $p\mid a_v^2-4b_v$. There are only finite number of choices for $\frac1p(a_v^2-4b_v)$ since they are algebraic integers in some fixed finite extension of $K$. And by Lemma \ref{nolinear}, for each choice such $v$ has zero density. This means that primes satisfying (2) has positive relative density in the set of primes that totally splits in $K$.
\end{proof}

Suppose $A/F$ is a strongly positive ordinary abelian variety of $\GSp_4$ type with descent data to $\bbQ$, then we can choose $p$ and $q$ be two good primes and $\p$, $\q$ be two primes in $K$ above $p$ and $q$ respectively such that $A$ is both $\p$- and $\q$- distinguished ordinary. Furthermore, we can choose $\q$ to be such that it does not divide the discriminant of the characteristic polynomial of $\Frob_v$ for all $v\mid p$ in $F$. 

Thus $\Frob_v$ will have distinct unit roots, denoted by $\alpha$, $\beta$, and are also distinct mod $\q$. Thus the representation $\rho_{A,\q}|_{G_{F_v}}$ and the corresponding residual representation will be of form
$$\rho_{A,\q}|_{G_{F_v}}=\left(\begin{matrix}\lambda_{\alpha}&0&0&0\\0&\lambda_{\beta}&0&0\\0&0&\epsilon^{-1}\lambda_{\beta}^{-1}&0\\0&0&0&\epsilon^{-1}\lambda_{\alpha}^{-1}\end{matrix}\right),$$ 
 
$$\bar{\rho}_{A,\q}|_{G_{F_v}}=\left(\begin{matrix}\lambda_{\bar{\alpha}}&0&0&0\\0&\lambda_{\bar{\beta}}&0&0\\0&0&\bar{\epsilon}^{-1}\lambda_{\bar{\beta}}^{-1}&0\\0&0&0&\bar{\epsilon}^{-1}\lambda_{\bar{\alpha}}^{-1}\end{matrix}\right),$$
where $\lambda_\alpha$ ($\lambda_\beta$, etc.) is the character that sends $\Frob_v$ to $\alpha$ ($\beta$, etc.).

\newpage

\section{The Serre-Tate Canonical Lift}

Let $p$ be a prime that totally splits in $K$ and $\p\mid p$ be a prime above $p$ in $K$. In this chapter's first part, we derive the analogous results of Serre-Tate canonical lifts for $\p$-distinguished ordinary principally polarized abelian varieties $A$ over a finite field $k$ of characteristic $p$.

We first recall the original Serre-Tate theorem. 

\begin{thm} (Serre-Tate) Let $k$ be a finite field of characteristic $p>0$, and $R$ be a ring with a nilpotent ideal $I\subset R$ satisfying $R/I\cong k$. We define $\mathscr{A}(R)$ be the category of abelian schemes over $\mathrm{Spec}(R)$, and $\mathscr{D}(R)$ be the category of triples $(A_k, G,\mathcal{E})$, where \begin{itemize}
    \item $A_k$ is an abelian scheme over $\mathrm{Spec}(k)$; 
    \item $G$ is a $p$-divisible group over $\mathrm{Spec}(R)$;
    \item $\mathcal{E}$ is an isomorphism $A_k[p^\infty]\xrightarrow{\sim}G\times_R k$.
\end{itemize}

Then the functor $\Phi:\mathscr{A}\to\mathscr{D}$, where 
\[A\mapsto (A\times_R k, A[p^\infty], \text{natural } \mathcal{E}),\]
is an equivalence of categories. 
\end{thm}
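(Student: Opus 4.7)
The plan is to follow the classical Serre--Tate/Messing strategy, reducing to infinitesimal deformations over square-zero thickenings and then exploiting the fact that the deformation theory of an abelian scheme and of its $p$-divisible group are governed by the same Hodge filtration data on the Dieudonn\'e crystal. First I would reduce to the case $I^2 = 0$: by filtering $I \supset I^2 \supset \cdots \supset I^n = 0$ and inducting on the nilpotence index, it suffices to treat the case of a square-zero kernel $R \twoheadrightarrow k$.

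For fully faithfulness in this setting, I would invoke Drinfeld's rigidity lemma for $p$-divisible groups together with the formal unramifiedness of the $\mathrm{Hom}$ functor of abelian schemes. Given $A, B \in \mathscr{A}(R)$ and a morphism $(f_k, f_{p^\infty}) : \Phi(A) \to \Phi(B)$ in $\mathscr{D}(R)$, rigidity guarantees that $f_{p^\infty}$ is classified by a morphism of $p$-divisible groups over $R$ compatible with the Grothendieck--Messing crystal filtration; formal unramifiedness then shows any lift of $f_k$ to a morphism $A \to B$ is unique, while the $f_{p^\infty}$ datum supplies existence via the crystal identification.

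The heart of the argument is essential surjectivity, where the key input is the Berthelot--Breen--Messing comparison $\mathbb{D}(A_k[p^\infty])(R) \cong H^1_{\mathrm{cris}}(A_k/R)$: a lift of the Hodge filtration on this common crystal simultaneously specifies a lift $G$ of $A_k[p^\infty]$ to $R$ (via Messing's theorem) and a formal abelian scheme lifting $A_k$. A triple $(A_k, G, \mathcal{E})$ thus canonically determines a formal lift of $A_k$ whose $p$-divisible group matches $G$ through $\mathcal{E}$. The main obstacle is the final algebraization: Messing's theorem a priori produces only a formal lift, and to obtain an honest abelian scheme $A/R$ one must lift a polarization from $A_k$ --- whose obstruction sits in the same crystal cohomology that is already controlled --- and invoke Grothendieck's existence theorem. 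This algebraization step is the genuinely non-formal part of the proof; the rest amounts to a translation between crystal-theoretic and geometric languages.
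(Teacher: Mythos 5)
First, note that the paper does not prove this statement: it is recalled verbatim as the classical Serre--Tate theorem and then used, with no proof offered (and none expected, since it is a textbook result cited from the literature). So there is no argument in the paper against which to compare your sketch; I will assess it on its own terms.

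Your outline follows the crystalline/Grothendieck--Messing strategy, which is a legitimate route, but two steps as written are off. The algebraization concern at the end is misplaced: since $I$ is nilpotent --- not merely topologically nilpotent --- $\mathrm{Spec}\,R$ and $\mathrm{Spec}\,k$ have the same underlying space, and the deformation theory hands you directly an honest flat, proper scheme over $R$; the group structure (hence the abelian-scheme structure) lifts uniquely by rigidity, and there is no polarization to lift and no appeal to Grothendieck's existence theorem, which is about algebraizing formal schemes over $I$-adically complete (not nilpotent) rings. The more serious risk is circularity: the input that deformations of an abelian scheme over a square-zero thickening are classified by lifts of the Hodge filtration on the de Rham/crystalline $H^1$ is, in a number of expositions, deduced \emph{from} Serre--Tate together with Grothendieck--Messing for $p$-divisible groups. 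To use it as a black box you must cite a treatment that establishes it independently --- via the universal vector extension, following Grothendieck's letter to Tate and Mazur--Messing, or via Illusie's cotangent complex --- or the argument proves nothing. Finally, the full-faithfulness half via Drinfeld rigidity is entirely elementary and needs no crystals: a suitable power of $p$ annihilates the kernel of reduction on $\mathrm{Hom}$ groups, and the $p$-divisible group morphism datum lets one divide back. Drinfeld's proof (as exposed in Katz, ``Serre--Tate local moduli'') runs this elementary idea through \emph{both} halves, constructing the quasi-inverse functor explicitly without crystalline cohomology, and is the standard reference; you may find it the cleaner path.
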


\vskip 1 em

Now assume that the abelian variety $\bar{A}/k$ has good ordinary reduction at $\p$. Let $R$ be an artinian local ring and $\mathfrak{m}_R$ be the maximal ideal and $R/\mathfrak{m}_R=k$. Then there exists an integer $n$ such that $\mathfrak{m}_R^n=(0).$ By the original Serre-Tate theorem, the set of liftings of $A$ to $R$ is equivalent to the liftings of the $p$-divisible groups $$A[p^\infty]=\bigoplus\limits_{v\mid p}A[v^\infty]=A[\p^\infty]\oplus A[\bar{\p}^\infty],$$ 
where $$A[\bar{\p}^\infty]=\bigoplus\limits_{v\mid p,\,v\neq \p}A[v^\infty]. $$

We now want to consider the lift of $A[\p^\infty]$ to $\mathcal{A}[\p^\infty]$ in $R$. Since $A$ is $\p$-ordinary, we have a canonical product structure of $\p$-divisible group
\[A[\p^\infty]=\hat{A}\times T_{\p} A\otimes (K_\p/\OO_{K_\p}),\]
where $T_{\p} A\otimes (K_\p/\OO_{K_\p})$ is the \'etale quotient, and the toroidal formal group $\hat{A}$ satisfies \begin{align}\tag{4.1}\hat{A}\cong \Hom_{\OO_{K_\p}}(T_{\p}A^\vee,\hat{\mathbb{G}}_m).\end{align}

The identification of (4.1) is given by the following steps: \begin{enumerate}
        \item [(i)] We have the Weil pairings\[e_{\p^n}:A[\p^n]\times A^\vee[\p^n]\to \mu_{p^n}.\]
        \item [(ii)] Restricting the pairing to $\hat{A}$ gives the pairing \[e_{\p^n}:\hat{A}[\p^n]\times A^\vee[\p^n]\to \mu_{p^n}.\] This gives an isomorphism 
        \[\hat{A}[\p^n]\cong\Hom_{\OO_K}(A^\vee[\p^n],\mu_{p^n}).\]
        \item [(iii)] Taking inverse limit gives the isomorphism \[\hat{A}\cong\Hom_{\OO_{K_\p}}(T_\p A^\vee,\hat{\mathbb{G}}_m).\]
    \end{enumerate}

For a lift $\mathcal{A}/R$ of $A/k$, the $\p$-divisible part of $\mathcal{A}$ also has a canonical structure of extension
\[0\longrightarrow\hat{\mathcal{A}}\longrightarrow\mathcal{A}[\p^\infty]\longrightarrow T_\p A\otimes (K_\p/\OO_{K_\p})\longrightarrow 0\]
The isomorphism of $k$-groups (4.1) extends to isomorphism of $R$-groups 
\[\hat{\mathcal{A}}\cong \Hom_{\OO_{K_\p}}(T_\p A^\vee, \hat{\mathbb{G}}_m)\] via the pairing
\[E_\mathcal{A}:\hat{\mathcal{A}}\times T_\p A^\vee\to \hat{\mathbb{G}}_m\]
that canonically extends from the Weil pairing.

\vskip 1 em
\begin{lem}\label{canonicallift}
Let $p$ be a prime that totally splits in the number fields $K$ and $F$. If an abelian variety $A/F$ is $\p$-distinguished ordinary with an $\OO_K$ or $\OO_D$ (which is an order of a quaternion algebra over $K$) action, then for any $v\mid p$ in $F$ there exists a lift $\tilde{A}/F_v$ which is also $\p$-distinguished ordinary and with $\OO_K$ or $\OO_D$ action, and satisfies that \[\bar{\rho}_{\tilde{A},\p}|_{G_{F_v}}=\left(\begin{matrix}
\bar{\chi}_1&0&0&0\\0&\bar{\chi}_2&0&0\\
0&0&\bar{\epsilon}^{-1}\bar{\chi}_2^{-1}&0\\0&0&0&\bar{\epsilon}^{-1}\bar{\chi}_1^{-1}
\end{matrix}\right)=(\bar{\rho}_{A,\p}|_{G_{F_v}})^{ss},\]
a diagonal matrix.
\end{lem}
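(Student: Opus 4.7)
The approach is to construct $\tilde{A}$ as the Serre--Tate canonical lift of the reduction of $A_{F_v}$. Since $p$ splits completely in $F$, we have $F_v=\bbQ_p$ with residue field $k=\bbF_p$; let $\bar{A}/k$ be the reduction of $A_{F_v}$, which has good reduction by $\p$-ordinariness. The $\OO_K$- or $\OO_D$-action on $A$ reduces to one on $\bar{A}$, and the $\p$-ordinary product decomposition
\[
\bar{A}[\p^\infty]=\hat{\bar{A}}\times T_\p\bar{A}\otimes(K_\p/\OO_{K_\p})
\]
is preserved. By the Serre--Tate theorem, lifts of $\bar{A}$ to $R=\bbZ_p/p^n$ correspond to lifts of $\bar{A}[p^\infty]$; I would take the canonical lift, i.e., the one whose extension of the \'etale quotient by the formal part remains \emph{split} at every level (using the canonical extension of the pairing (4.1) to trivialize it), and analogously for the remaining summands of $\bar{A}[p^\infty]$. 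Passing to the inverse limit and applying Grothendieck algebraization to the (automatically lifted) polarization of $\bar{A}$ gives an abelian scheme $\tilde{\mathcal{A}}/\bbZ_p$; let $\tilde{A}$ be its generic fiber.

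Two preliminary properties fall out of the construction. First, the $\OO_K$- or $\OO_D$-action lifts: any endomorphism of $\bar{A}$ preserves the canonical splitting of $\bar{A}[p^\infty]$, so by the Serre--Tate equivalence it extends uniquely to $\tilde{\mathcal{A}}$. Second, $\p$-ordinariness is immediate because the heights of the formal and \'etale parts are preserved on lifting; $\p$-distinguished ordinariness then follows because the unit roots of $Q_v(X)$ are determined by the Frobenius action on the \'etale quotient, which is unchanged from $\bar{A}$.

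The main point is that $\bar\rho_{\tilde{A},\p}|_{G_{F_v}}$ is diagonal. The splittings on the $p$-divisible group give a $G_{F_v}$-equivariant direct sum decomposition of the Tate module $T_\p\tilde{A}$ into a formal piece $T_\p\hat{\tilde{\mathcal{A}}}$ and an \'etale piece $T_\p^{\text{\'et}}\tilde{A}\cong T_\p\bar{A}$. On the \'etale piece Galois acts unramifiedly with Frobenius eigenvalues the unit roots $\alpha,\beta$; distinguished ordinariness makes these distinct mod $\p$, so the piece diagonalises further as $\lambda_\alpha\oplus\lambda_\beta$. Via (4.1), the formal piece is $\Hom_{\OO_{K_\p}}(T_\p\bar{A}^\vee,\hat{\mathbb{G}}_m)$, on which Galois acts through the cyclotomic character times the contragredient of its action on $T_\p\bar{A}^\vee$; the principal polarization identifies $\bar{A}^\vee$ with $\bar{A}$ respecting the splittings, so the resulting characters on the formal piece are $\epsilon^{-1}\lambda_\beta^{-1}$ and $\epsilon^{-1}\lambda_\alpha^{-1}$. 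Reducing mod $\p$ produces the displayed diagonal matrix, which is exactly $(\bar\rho_{A,\p}|_{G_{F_v}})^{ss}$. The most delicate point is this last identification of the characters on the formal part via the pairing (4.1) twisted by the polarization; in the fake $\GSp_4$ case additional care is needed to ensure the canonical splitting is compatible with the quaternionic endomorphism action, which one verifies by noting that the $\OO_D$-action preserves the formal/\'etale filtration on $\bar{A}[p^\infty]$ and hence commutes with the canonical trivialization.
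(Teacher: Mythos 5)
Your overall strategy -- Serre--Tate, take the split lift of the $\p$-divisible part, observe that endomorphisms respect the formal/\'etale splitting and hence lift, read off the diagonal residual representation from the product structure -- is the same as the paper's. There is, however, one genuine gap in how you handle the other summands of $\bar{A}[p^\infty]$.

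You write: ``the one whose extension of the \'etale quotient by the formal part remains split at every level \ldots and analogously for the remaining summands of $\bar{A}[p^\infty]$.'' That phrase presupposes that every summand $\bar{A}[\lambda^\infty]$ for $\lambda\mid p$ in $K$ has the ordinary structure (connected--\'etale extension of height $2$ by height $2$) and therefore admits a canonical splitting. But the hypothesis of the lemma is only $\p$-distinguished ordinariness, which by Definition~\ref{ord} concerns the single prime $\p$ of $K$; the $p$-divisible groups $A[\lambda^\infty]$ at the other $\lambda\neq\p$ need not be ordinary at all, so ``the canonical split lift'' of those summands may simply not exist. The paper sidesteps this by not splitting anything at $\bar{\p}$: it sets $\mathcal{A}_n[\bar{\p}^\infty]=A|_{W(F_{(v)})/(v^n)}[\bar{\p}^\infty]$, i.e.\ it keeps the $\bar{\p}$-divisible group coming from the N\'eron model of $A$ over $\OO_{F_v}$ (which exists without any ordinariness assumption), and performs the split-lift construction only on the $\p$-part. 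Endomorphisms then lift because on $\mathcal{A}_n[\bar{\p}^\infty]$ they are already defined (they come from $A$), and on $\mathcal{A}_n[\p^\infty]$ they respect the splitting. You should replace your ``analogously for the remaining summands'' by ``keep the remaining summands equal to those of the N\'eron model of $A$,'' after which the rest of your argument goes through unchanged.

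Two smaller remarks. First, the observation about the polarization lifting via Grothendieck algebraization is a point the paper leaves implicit, and it is good that you flagged it. Second, your identification of the characters on the formal part via the pairing (4.1) and the principal polarization is more explicit than the paper (which only invokes ``the product structure on $\mathcal{A}[\p^\infty]$''); just be careful about the direction of duality, since $\rho_{A,\p}$ in the paper's convention is the $H^1$ representation, i.e.\ the dual of the Tate module, and the $\epsilon^{-1}$ factors in the lemma's display reflect that convention.
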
 
\begin{proof}
Consider the reduction modulo $v$ of $A$, denoted by the $\p$-distinguished ordinary abelian variety $\bar{A}$ over the residue field $F_{(v)}$ of $F_v$ (which is isomorphic to $\bbF_p$). Let $W(F_{(v)})$ be the ring of Witt vectors over $F_{(v)}$, and take a series of Artinian local rings  $R_n=W(F_{(v)})/(v^n)$, and let $\mathcal{A}_n$ be the lift of $\bar{A}$ on $R_n$ such that: \begin{enumerate}
    \item [(i)] $\mathcal{A}_n[\bar{\p}^\infty]=A|_{W(F_{(v)})/(v^n)}[\bar{\p}^\infty]$; That is, $\mathcal{A}$ fixes everything else but the $\p$-divisible part in $A[p^\infty]$. 
    \item [(ii)] The lift $\mathcal{A}_n[\p^\infty]$ is the unique split lift of $\bar{A}[\p^\infty]$, that is, the  extension
    \[0\longrightarrow\hat{\mathcal{A}}_n\longrightarrow\mathcal{A}_n[\p^\infty]\longrightarrow T_\p A\otimes (K_\p/\OO_{K_\p})\longrightarrow 0\]
    is split.
\end{enumerate}

If we take the limit of the $R_n$'s we obtain a lift
\[\mathcal{A}=\lim_{\longrightarrow}\mathcal{A}_n\]
on $W(F_{(v)})$ that also has the canonical product structure $$\mathcal{A}=\hat{\mathcal{A}}\times T_\p A\otimes (K_\p/\OO_{K_\p}).$$ 

Since $W(F_{(v)})$ is isomorphic to $\OO_{F_v}$, we may as well think of $\mathcal{A}$ as defined over $F_v$. 

Now consider any endomorphism $f:A\to A$ that gives an endomorphism $\bar{f}$ on $\bar{A}$ hence an endomorphism on the $p$-divisible group $\bar{A}[p^\infty]$. 

For each $n$, since $\mathcal{A}_n$ does not change the $\bar{\p}$-divisible part, the action lifts to an endomorphism on $\mathcal{A}_n[\bar{\p}^\infty]$, and the split extension structure also canonically gives an endomorphism on the $\p$-divisible part $\mathcal{A}_n[\p^\infty]$ that lifts $\bar{f}|_{\bar{A}[\p^\infty]}$. This means that we can canonically lift $\bar{f}$ to an endomorphism on the whole $p$-divisible group, thus defining a morphism $$f_n:(\bar{A},G,\mathcal{E})\to (\bar{A},G,\mathcal{E})$$ on $\mathscr{D}(R_n)$. By Serre-Tate Theorem (considering the inverse functor of $\Phi$), this gives an endomorphism  $f_n\in \End(\mathcal{A}_n)$. Passing to limits we know that any endomorphism of $A$ gets deformed to an endomorphism on $\mathcal{A}$ thus we still have an $\OO_K$ or $\OO_D$ action on $\mathcal{A}$.  

In particular, we have the Frobenius morphism $\Frob_v$ on $\bar{A}$ lifts and since the action of it on the dual of $T_\p\bar{A}$ is of form $\Bigg(\begin{matrix}\bar{\chi}_1&0\\0&\bar{\chi}_2\end{matrix}\,\Bigg)$ due to $\p$-distinguished ordinariness, the Galois representation associated to the lift $\mathcal{A}$ would be of form $$\left.\bar{\rho}_{\mathcal{A},\p}\right|_{G_{F_v}}\cong\left(\begin{matrix}
\bar{\chi}_1&0&0&0\\0&\bar{\chi}_2&0&0\\
0&0&\bar{\epsilon}^{-1}\bar{\chi}_2^{-1}&0\\0&0&0&\bar{\epsilon}^{-1}\bar{\chi}_1^{-1}
\end{matrix}\right),$$ a diagonal matrix due to the product structure on $\mathcal{A}[\p^\infty]$. 

\end{proof}

\vskip 1 em
We call the $\tilde{A}$ we choose in Lemma \ref{canonicallift} the $\p$-\textit{canonical lift} of $\bar{A}$ to $F_v$. 

\vskip 1 em
\begin{cor}\label{diagonallift}
For a strongly positive ordinary abelian variety $A/F$ of $\GSp_4$ type with descent data to $\bbQ$ and satisfies the large image property, there exist good primes $p$ and $q$ of $A$ with $\p\mid p$ and $\q\mid q$ primes in $K$ such that $A$ is $\p$- and $\q$-distinguished ordinary, and for each $w\mid q$ in $F$ we can find a lift $\tilde{A}_w/F_w$ of $\bar{A}/F_{(w)}$ such that both $\bar{\rho}_{\tilde{A}_w,\q}|_{G_{F_v}}$ (for all $v\mid p$ in $F$) and $\bar{\rho}_{\tilde{A}_w,\q}|_{G_{F_w}}$ are diagonal matrices. 
\end{cor}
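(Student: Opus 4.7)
The plan is to combine Lemma \ref{suff1} with an application of the canonical lift construction of Lemma \ref{canonicallift} at the prime $\q$, using exactly the setup of good primes described in the paragraph preceding that lemma.

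First, invoke Lemma \ref{suff1} to produce two good primes $p \ne q$ of $\bbQ$ together with primes $\p \mid p$ and $\q \mid q$ of $K$ at which $A$ is simultaneously $\p$- and $\q$-distinguished ordinary; among the positive-density set of candidate pairs, discard the finitely many $q$ for which $\q$ divides $p \prod_{v \mid p}(a_v^2 - 4 b_v)$. This guarantees that for every $v \mid p$ in $F$, the four Frobenius eigenvalues $\alpha_v, \beta_v, p\alpha_v^{-1}, p\beta_v^{-1}$ remain pairwise distinct modulo $\q$ and that no unit root collides with a non-unit root modulo $\q$. This is exactly the configuration established in the paragraph following the proof of Lemma \ref{canonicallift}.

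Next, for each $w \mid q$ in $F$, apply Lemma \ref{canonicallift} to $\bar A / F_{(w)}$ with the roles of $p$ and $q$ interchanged; the hypotheses transfer verbatim because $q$ splits completely in $K$ and $F$ and $A$ is $\q$-distinguished ordinary with the $\OO_K$- or $\OO_D$-action. This produces the $\q$-canonical lift $\tilde A_w / F_w$, still $\q$-distinguished ordinary and still equipped with the $\OO_K$- or $\OO_D$-action, and satisfying
\[
\bar{\rho}_{\tilde A_w, \q}|_{G_{F_w}} \cong \bigl(\bar{\rho}_{A, \q}|_{G_{F_w}}\bigr)^{\mathrm{ss}},
\]
which is diagonal thanks to the canonical splitting of the $\q$-divisible group of $\tilde A_w$. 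For the second diagonality, at any $v \mid p$ the representation $\bar{\rho}_{\tilde A_w, \q}|_{G_{F_v}}$ is unramified (since $v \nmid q$), so up to conjugation it is determined by the characteristic polynomial of $\Frob_v$, namely the reduction of $X^4 - a_v X^3 + b_v X^2 - p a_v X + p^2$ modulo $\q$. By the discriminant-avoidance arranged in the previous paragraph, this polynomial has four pairwise distinct roots modulo $\q$, so the representation is diagonalizable and takes the block-diagonal form $\mathrm{diag}(\lambda_{\bar\alpha}, \lambda_{\bar\beta}, \bar\epsilon^{-1} \lambda_{\bar\beta}^{-1}, \bar\epsilon^{-1} \lambda_{\bar\alpha}^{-1})$ displayed at the close of Section~3.

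The only real subtlety, and the part that deserves the most care in writing out, is notational: $\tilde A_w$ is a priori only an object over $F_w$, so the symbol $\bar{\rho}_{\tilde A_w, \q}|_{G_{F_v}}$ for $v \mid p$ must be interpreted as the restriction of the $\q$-adic representation of a global lift whose $F_w$-component is $\tilde A_w$ (the global object being the $\mathcal{A}$ produced via Moret-Bailly in Chapter~6). With that convention the restriction at $v \mid p$ is forced to coincide with $\bar{\rho}_{A, \q}|_{G_{F_v}}$ away from $q$, and both claims reduce to the computations above. All substantive ingredients are already in hand; the corollary functions as a packaging of Lemmas \ref{suff1} and \ref{canonicallift} tailored to the Moret-Bailly input of the next chapter.
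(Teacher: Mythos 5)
Your overall architecture matches the paper's: choose good primes via Lemma \ref{suff1}, impose an extra avoidance condition at $\q$, apply the $\q$-canonical lift of Lemma \ref{canonicallift} at each $w \mid q$, and then read off the diagonal forms at $w$ (via the semisimplification/split Serre--Tate lift) and at $v \mid p$ (via unramifiedness and distinct Frobenius eigenvalues). Your remark on the interpretation of $\bar{\rho}_{\tilde A_w, \q}|_{G_{F_v}}$ is also on point; the paper simply sets this equal to $\bar{\rho}_{A,\q}|_{G_{F_v}}$, since the $\q$-canonical lift is a purely local object at $w$ and does not alter the $\q$-divisible data at primes above $p$.

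However, there is a concrete gap in your choice of avoidance condition. You discard $q$ for which $\q \mid p\prod_{v\mid p}(a_v^2 - 4b_v)$ and claim this makes all four roots of $Q_v(X)$ distinct modulo $\q$. That is false. With roots $\alpha,\beta,p\alpha^{-1},p\beta^{-1}$ and $\gamma_1 = \alpha + p\alpha^{-1}$, $\gamma_2 = \beta + p\beta^{-1}$, one has
\[
a_v^2 - 4b_v = (\gamma_1 - \gamma_2)^2 - 8p = \frac{(\alpha-\beta)^2(\alpha\beta - p)^2}{(\alpha\beta)^2} - 8p.
\]
The quantity $\p\nmid a_v^2 - 4b_v$ is a clean proxy for $\alpha \not\equiv \beta \pmod{\p}$ only because $p\alpha^{-1}, p\beta^{-1}\equiv 0 \pmod{\p}$, which kills the extra $-8p$. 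But modulo $\q$ with $q\ne p$, $p$ is a unit, all four roots are units, and $a_v^2 - 4b_v$ no longer factors through differences of roots; in particular $\q\nmid a_v^2-4b_v$ does not rule out $\alpha\equiv\beta$, $\alpha^2\equiv p$, $\beta^2\equiv p$, or $\alpha\beta\equiv p$ modulo $\q$. Any of these collisions would spoil diagonalizability of $\bar{\rho}_{A,\q}(\Frob_v)$. The paper's condition, which you should adopt, is that $\q\nmid \mathrm{disc}(Q_v)$ for every $v\mid p$ --- the genuine discriminant, which is (up to factors of $\alpha\beta$) a power of $p$ times $(\alpha-\beta)^4(\alpha^2-p)^2(\beta^2-p)^2(\alpha\beta-p)^4$ and thus does control pairwise distinctness of all four roots modulo $\q$. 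This costs only finitely many $\q$ per choice of $p$, so the positive-density argument from Lemma \ref{suff1} still goes through. With that fix your proof is correct and coincides with the paper's.
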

\begin{proof}
We choose the good primes $p$ and $q$ in $\bbQ$ with $\p\mid p$ and $\q\mid q$ in $K$ according to the arguments in the end of \S 3.2, and we know that $\bar{\rho}_{A,\q}|_{G_{F_v}}$ is of form $$\left(\begin{matrix}\lambda_{\bar{\alpha}}&0&0&0\\0&\lambda_{\bar{\beta}}&0&0\\0&0&\bar{\epsilon}^{-1}\lambda_{\bar{\beta}}^{-1}&0\\0&0&0&\bar{\epsilon}^{-1}\lambda_{\bar{\alpha}}^{-1}\end{matrix}\right),$$ a diagonal matrix. 

Now if we take $\tilde{A}_w/F_w$ to be the $\q$-canonical lift of $\bar{A}/F_{(w)}$, then it does not change the Galois representation on $G_{F_v}$. (in other words, it does not change the action of $\Frob_v$ acting on the $q$-divisible group $\tilde{A}_w[q^\infty]$.) Thus $\bar{\rho}_{\tilde{A}_w,\q}|_{G_{F_v}}=\bar{\rho}_{A,q}|_{G_{F_v}}$ would also be diagonal. 

On the other hand, since $\bar{A}/F_{(w)}$ is $\q$-distinguished ordinary with an $\OO_K$ or $\OO_D$ action, by Lemma \ref{canonicallift} the $\q$-canonical lift $\tilde{A}_w$ satisfies that \[\bar{\rho}_{\tilde{A}_w,\q}|_{G_{F_w}}=\big(\bar{\rho}_{A,\q}|_{G_{F_w}}\big)^{ss}=\left(\begin{matrix}
\bar{\chi}_1&0&0&0\\0&\bar{\chi}_2&0&0\\
0&0&\bar{\epsilon}^{-1}\bar{\chi}_2^{-1}&0\\0&0&0&\bar{\epsilon}^{-1}\bar{\chi}_1^{-1}
\end{matrix}\right),\]
is also a diagonal matrix.
\end{proof}
\vskip 1 em
\newpage

\section{Moduli Spaces of $\GSp_4$ type Abelian Varieties}
The goal of this chapter is to prove the following statement: 
\begin{thm} \label{MB}
Given a type (real or fake) and a totally real number field $F$ of degree $d$, and a quarternion algebra $B/F$ if the type is fake, suppose $\p,\q$ are two primes of $F$ above $p$ and $q$ respectively that split completely in $F$, then if we consider the moduli space $Y/E$ of all triples $(A,\iota_\p,\iota_\q)$ consisting of 
\begin{itemize}
    \item An abelian variety $A$ of dimension $g$ over $E$ of $\GSp_4$ type with $\End^0(A)=F$ (if real, so $g=2d$) or $B$ (if fake, so $g=4d$); 
    \item A full level-$\p\q$ structure, in the sense of \begin{itemize}
        \item An isomorphism of group schemes $\iota_\p:A[\p]\to (\bbZ/p\bbZ)^g\times \mu_p^g$;
        \item An isomorphism of group schemes $\iota_\q:A[\q]\to (\bbZ/q\bbZ)^g\times \mu_q^g$;
    \end{itemize}    
\end{itemize}

then $Y$ is smooth and geometrically connected. 
\end{thm}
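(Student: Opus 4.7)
The plan is to identify $Y$ with (the base change to $E$ of) a PEL-type Shimura variety carrying a principal congruence level structure at $\p\q$, and then invoke the standard smoothness and connectedness theorems for such varieties. In the real case the relevant reductive $\bbQ$-group is $G=\mathrm{Res}_{F/\bbQ}\GSp_4$ equipped with the Siegel-type Shimura datum coming from the symplectic Hodge structure on $H^1(A,\bbQ)$; in the fake case $G$ is the $\bbQ$-form obtained from $B$, namely the group of $B$-linear symplectic similitudes of a rank-one left $B$-module carrying the involution attached to a polarization of $A$. Because $\p,\q$ split completely in $F$, and (in the fake case) can be chosen unramified in $B$, the local factor of $G$ at $\p$ and at $\q$ is the split form $\GSp_4$, so the principal congruence subgroup $K(\p\q)\subset G(\mathbb{A}_f)$ is defined and is neat once $p,q>3$. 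The representability theorem of Mumford--Kottwitz then presents $Y$ as the generic fibre of the Shimura variety $\mathrm{Sh}_{K(\p\q)}(G,X)$.

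Smoothness follows from the usual infinitesimal lifting argument in characteristic zero. Given a square-zero thickening of $E$-algebras, Grothendieck--Messing theory reduces the deformation of the PEL data $(A,\iota_\p,\iota_\q)$ with its $\OO_F$ (respectively $\OO_B$) action to lifting the Hodge filtration on de Rham cohomology; this filtration lifts canonically because $F$ (resp.\ $B$) acts compatibly on de Rham cohomology and there is no obstruction in characteristic zero, while $\iota_\p,\iota_\q$ lift uniquely since $p,q$ are invertible in $E$.

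The heart of the proof is geometric connectedness. The derived group $G^{\mathrm{der}}$ is $\mathrm{Res}_{F/\bbQ}\Sp_4$ in the real case and its inner $B$-form in the fake case; in either case it is simply connected, $\bbQ$-simple, non-compact at every archimedean place of $F$ (so strong approximation holds), and $G^{\mathrm{der}}(\bbR)$ is connected (Siegel factors at split real places and compact $\mathrm{SU}^*(4)$-type factors at ramified ones are each connected). The standard $\pi_0$ computation then gives
\[\pi_0\!\left(\mathrm{Sh}_{K(\p\q)}(G,X)_{\bar E}\right)\;=\;F^{\times}_{+}\backslash \mathbb{A}_{F,f}^{\times}/\nu(K(\p\q)),\]
a ray class group of $F$ of conductor dividing $\p\q$, where $\nu$ is the similitude character. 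Once $E$ is assumed to contain the reflex field of $(G,X)$ together with the $p$-th and $q$-th roots of unity --- the latter being forced by the existence over $E$ of the isomorphisms $\iota_\p,\iota_\q$ into $\mu_p^g$ and $\mu_q^g$ --- Shimura's reciprocity law together with the Weil-pairing compatibility built into a full symplectic level structure collapses this ray class group to a singleton, giving geometric connectedness.

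The main obstacle I expect is bookkeeping in the fake case: one must verify that the quaternionic $\bbQ$-form of $G$ really has centre $\mathrm{Res}_{F/\bbQ}\mathbb{G}_m$ via the reduced norm on $B$, and that no additional components arise from the class group of $B$ or from disconnectedness of $G^{\mathrm{der}}(\bbR)$ at archimedean places where $B$ ramifies, so that the $\pi_0$ formula above still applies. I would resolve this by choosing $B$ compatibly with the polarization, identifying the similitude character for the $B$-linear structure with $\mathrm{Nrd}\circ(\cdot)$ composed with the $F/\bbQ$-norm, and then verifying connectedness of each archimedean factor of $G^{\mathrm{der}}$ case by case.
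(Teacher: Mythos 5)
Your proposal reaches the same destination --- identify $Y$ with a PEL-type Shimura variety, invoke Kottwitz for representability/smoothness, and argue connectedness group-theoretically --- but by a genuinely different route. The paper follows Kudla's $\mathrm{GSpin}$ construction: it builds the moduli space of $8d$-dimensional abelian varieties with $C=M_2(B)$-action as the Shimura variety for $G=\mathrm{Res}_{F/\bbQ}\mathrm{GSpin}(V)$ ($V$ a $5$-dimensional quadratic space of signature $(3,2)$ with $C^+(V)\cong M_2(B)$), and then recovers the $2d$- or $4d$-dimensional $\GSp_4$-type abelian varieties by applying idempotents of $\OO_C$. You instead work directly with the moduli of the $\GSp_4$-type abelian varieties themselves, with $G=\mathrm{Res}_{F/\bbQ}\GSp_4$ or its quaternionic form, and prove smoothness by Grothendieck--Messing and connectedness by strong approximation. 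Because of the exceptional isomorphism $\mathrm{GSpin}_5\cong\GSp_4$, the two Shimura data are the same; the difference is whether one wraps the moduli problem in the Clifford-algebra package. The paper's detour through $\mathrm{GSpin}$ is not wasted motion --- it is the same picture that appears in the Kuga--Satake construction in \S 7, so the paper gets uniformity with the later material --- whereas your approach is leaner for proving \emph{this} theorem in isolation.

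Two specific points deserve correction or more care. First, in the fake case $H^1(A,\bbQ)$ has $\bbQ$-dimension $2g=8d$ and $\dim_\bbQ B=4d$, so it is a rank-\emph{two} left $B$-module, not rank one; the relevant $\bbQ$-group is the similitude group of a $B$-Hermitian form on a rank-two module, which is indeed an inner form of $\mathrm{Res}_{F/\bbQ}\GSp_4$, but your stated module-rank is off. Second, your connectedness argument is imprecise in the same place the paper's own sketch is: for the principal level-$\p\q$ subgroup one has $\pi_0(\mathrm{Sh}_{K(\p\q)}(G,X)_{\bar E}) \cong F^\times_{+}\backslash \mathbb{A}_{F,f}^\times / \nu(K(\p\q))$, and enlarging $E$ does not change this geometric $\pi_0$; it only ensures that each geometric component is already defined over $E$. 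What actually makes $Y$ geometrically connected is that the full level structure $\iota_\p,\iota_\q$, compatible with the Weil pairing, pins down the similitude factor, so the moduli functor $Y$ is a single geometric component of $\mathrm{Sh}_{K(\p\q)}(G,X)$ rather than the whole Shimura variety. The phrase ``collapses this ray class group to a singleton'' should therefore be replaced by ``cuts out a single fiber of the similitude map $\mathrm{Sh}_{K(\p\q)}\to F^\times_{+}\backslash \mathbb{A}_{F,f}^\times / \nu(K(\p\q))$.'' Separately from this, the Hasse principle for $G$ (which the paper invokes and you omit) is needed to rule out extra components of the \emph{moduli space} coming from nontrivial classes in $\ker^1(\bbQ,G)$: Kottwitz's representability theorem gives $M_K$ as $|\ker^1(\bbQ,G)|$ disjoint copies of the Shimura variety, and the Hasse principle for type C is what collapses this to one copy. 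Your proposal should include this step.
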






\subsection{Construction of the Shimura Variety}

We will first construct the Shimura variety corresponding to $\GSp_4$ type abelian varieties following the arguments of \cite{kudla}. Now in either cases (real or fake) we let $B$ be an indefinite quaternion algebra over a totally real field $F$ with canonical involution $\iota$ and a maximal order $\OO_B$ such that $\OO_B^\iota=\OO_B$, and let $C=M_2(B)$ with the involution $x'=(x^\iota)^t$. Let $\{\sigma_1,\cdots ,\sigma_d\}$ be the set of real embeddings $F\hookrightarrow \bbR$. 

Let $V=\{x\in C:x'=x,\, \mathrm{tr}(x)=0\}$ be a five dimensional vector space over $F$ with a quadratic form $q$ defined by $x^2=q(x)\cdot \mathbf{1}_2$ of signature (3,2). Then the natural homomorphism $C(V)\to C$ extending the inclusion $V\hookrightarrow C$ to its Clifford algebra gives an isomorphism $C^+(V)\cong C$. We may let 
\[G=\mathrm{Res}_{F/\bbQ}\big(\mathrm{GSpin}(V)\big)\] be the restriction of scalars from $F$ to $\bbQ$ of the general Spin group \[\mathrm{GSpin}(V)=\{g\in C^\times: gg'=\nu(g)\}\] of $V$, so \[G(\bbQ)\cong \mathrm{GSpin}(3,2,F),\] and $G(\bbR)=G(\bbQ)\otimes\bbR$ acts on $V^d(\mathbb{R})$, which is $d$ copies of $V(\bbR)$, via the real embeddings $\sigma_i$ on the $i$-th component. We define $G(\bbQ)^+$ to be the connected component of elements in $G(\bbQ)$ such that $\nu(g)$ is a totally positive element in $F^\times$. 

Let $\mathcal{D}$ be the set of $z=(z_1,\cdots z_d)$ such that $z_i=(z_{i1},z_{i2})$ represents a negative $2$-plane in the $i$-th copy of $V(\bbR)$ in $V^d(\bbR)$ with $z_{i1}$ and $z_{i2}$ be a properly oriented basis such that the restriction of the quadratic form $q$ from $V(\bbR)$ to $z_i$ has matrix $-\mathbf{1}_2$ for the basis $z_{i1} , z_{i2}$. Then 
\[J_z:=(z_{i1}z_{i2})_{i=1}^d\in C^d(\mathbb{R})\] 
lies in $G(\mathbb{R})$ and defines a morphism on $\mathbb{R}$:
\[h_z:\mathbb{S}\to G,\quad h_z(i)=J_z\]
since $J_z'=-J_z$ and $J_zJ_z'=\mathbf{1}$. Thus $\mathcal{D}$ can be viewed as the space of conjugacy classes of such maps under the action of the group $G(\bbR)$ by
\[J_{gz}=gJ_zg^{-1}.\]

Choose an element $\tau\in B^\times$ such that $\tau^\iota=-\tau,\,\tau^2=-D(B)$ (where $D(B)$ is the discriminant of $B$) and $\tau\OO_B\tau^{-1}=\OO_B$, then the map $x\mapsto x^*=\tau x^{\iota}\tau^{-1}$ gives a positive involution on $\OO_B$. If we write $\OO_C=M_2(\OO_B)$ and $W=\OO_C^d$, viewed as left and right $C$ (thus, $G(\bbQ)$)-module, and let $\alpha=\mathrm{diag} (\tau,\tau)\in \OO_C$, then $\alpha'=-\alpha$ and the map
\[x\mapsto x^*:=\alpha x'\alpha^{-1}\] is a positive involution on $\OO_C$. 

We may define the alternating form \[\langle\cdot,\cdot\rangle: W\times W\to \bbQ\] by
\[\langle x,y\rangle=\sum_{i=1}^d\mathrm{tr}(y_i'\alpha^{-1} x_i).\]

Then for $c\in C$, we have
$$\langle cx,y\rangle=\sum_{i=1}^d\mathrm{tr}(y_i'\alpha^{-1}(cx_i))=\sum_{i=1}^d\mathrm{tr}(y_i'\alpha^{-1}c\alpha\alpha^{-1}x_i)=\langle x,c^*y\rangle$$

and $$\langle xc,y\rangle=\sum_{i=1}^d\mathrm{tr}(y_i'\alpha^{-1}x_ic)=\sum_{i=1}^d\mathrm{tr}(cy_i'\alpha^{-1}x_i)=\langle x,yc'\rangle.$$
In particular, \[\langle xg, yg\rangle=\nu(g)\langle x,y\rangle,\] for $g\in G$ and especially, 
\[\langle xJ_z,yJ_z\rangle=\langle x,y\rangle.\]

This means that $(W(R),\langle\cdot,\cdot\rangle)$ is a skew-Hermitian $(\OO_C(R),*)$-module with $G$ being the algebraic group of symplectic $F$-linear automorphisms  up to a similitude factor $\nu\in\mathrm{Res}_{F/\bbQ}\mathbb{G}_m\otimes_\bbQ R$. 

Now for a compact open subgroup $K\subset G(\mathbb{A}^\infty)$, consider the functor $M_K$ that associates to a locally noetherian scheme $S$ over $\bbQ$ the set of quadruples $(A, \iota, \lambda, \bar{\eta})$, such that 
\begin{itemize}
    \item $A$ is a abelian scheme over $S$ up to isogeny;
    
    \item $\iota:C\to \End^0(A)$ is a homomorphism that satisfies a determinant condition \[\mathrm{det}\big(\iota(c)\mid\mathrm{Lie}(A)\big)=N^2(c),\] where $N$ is the reduced norm on $C$. It also induces a homomorphism from $C$ to the endomorphism of the dual of $A$ denoted by $\iota^\vee:C\to\End^0(A^\vee)$.

    \item $\lambda$ is a $\bbQ$-class of polarizations on $A$ that is compatible to the action by $C$, that is, 
    \[\lambda\circ\iota(c)=\iota^\vee(c)\circ \lambda.\]

    \item $\bar{\eta}$ is a $K$-class of $C\otimes \mathbb{A}^\infty$-linear isomorphisms 
    \[\eta:V_{\mathbb{A}^\infty}(A)\to W(\mathbb{A}^\infty)=W(\hat{\bbZ})\otimes\bbQ.\]
    that respect the symplectic forms on both sides up to a scalar multiple in $\mathbb{A}^\infty$. Here \[V_{\mathbb{A}^\infty}(A)=\prod_l T_l(A)\otimes \bbQ\cong H_1(A,\mathbb{A}^\infty)\] is the adelic Tate module of $A$.
\end{itemize}
Note that this abelian scheme $A$ will have dimension $8d$ over $S$. 

\begin{prop} (cf. \cite[Prop. 1.1]{kudla} )
For $K$ neat (in the sense of \cite{Lan}) this moduli problem is representable by a smooth
quasi-projective scheme $M_K$ over $\bbQ$ and \[M_K(\bbC)\cong Sh_K(G,\mathcal{D})(\mathbb{C}),\]
where $Sh_K(G,\mathcal{D})$ is the Shimura variety defined by the PEL datum $(G,\mathcal{D})$. 
\end{prop}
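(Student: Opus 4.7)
The plan is to adapt the argument of Kudla's Proposition 1.1 (which treats the case $F=\bbQ$) to our totally real setting by recognizing the datum constructed above as a PEL datum in the sense of Kottwitz and Lan, and then invoking the standard representability, smoothness, and complex uniformization theorems for such moduli problems. The input to Lan's framework is the tuple $(\OO_C, *, W, \langle\cdot,\cdot\rangle, h_z)$: the involution $*$ on $\OO_C$ is positive, $W$ is a faithful skew-Hermitian $(\OO_C,*)$-module under the alternating pairing constructed in the chapter, and $h_z:\mathbb{S}\to G_\bbR$ defines a Hodge structure of type $(-1,0)+(0,-1)$ on $W(\bbR)$ whose Weil form is $\langle\cdot,\cdot\rangle(\cdot, h_z(i)\cdot)$, a polarization. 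The prescribed determinant condition $\det(\iota(c)\mid\mathrm{Lie}(A))=N^2(c)$ is exactly the one cut out by the Hodge cocharacter $\mu_{h_z}$ after Morita reduction $C=M_2(B)$.

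Once the PEL datum is in place, representability and smoothness follow from Lan's main existence theorem. Neatness of $K$ rules out nontrivial automorphisms of $(A,\iota,\lambda,\bar\eta)$, so the moduli stack is in fact a scheme. Quasi-projectivity comes from forgetting $\iota$ and part of the level data in order to embed, via a finite morphism, into a Siegel moduli space of polarized abelian schemes of dimension $8d$ with suitable level structure, which is quasi-projective by Mumford's GIT. Smoothness over $\bbQ$ follows from Grothendieck--Messing deformation theory: in characteristic zero the Hodge filtration deforms freely subject to the $*$-compatibility and determinant constraints, and these constraints cut out a smooth closed subscheme of the appropriate Grassmannian parametrizing admissible filtrations on $W$.

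For the complex uniformization, starting from $(A,\iota,\lambda,\bar\eta)\in M_K(\bbC)$, the homology $H_1(A,\bbQ)$ carries a polarized $\bbQ$-Hodge structure together with a $C$-action; by uniqueness up to $G(\bbQ)$ of such symplectic $(C,*)$-modules (of the prescribed local invariants) it becomes isomorphic to $W_\bbQ$. The transported Hodge structure is recorded by a point of $\mathcal{D}$ via $h_z$, and $\bar\eta$ records a coset in $G(\mathbb{A}^\infty)/K$. Collecting these data across isomorphism classes and using strong approximation for $G^{\mathrm{der}}$ yields the familiar double coset description
$$M_K(\bbC)\cong G(\bbQ)\backslash\bigl(\mathcal{D}\times G(\mathbb{A}^\infty)/K\bigr)=\mathrm{Sh}_K(G,\mathcal{D})(\bbC).$$

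The main obstacle I expect is the determinant and cocharacter bookkeeping: one must verify that the action of $\mu_{h_z}$ on $W_\bbC$ through the $C$-action has weights matching $\det(\iota(c)\mid\mathrm{Lie}(A))=N^2(c)$ and the signature $(3,2)$ at each real place. This reduces to a local computation at each of the $d$ real places of $F$ using the indefiniteness of $B$ and the Morita equivalence $C\cong M_2(B)$; it is precisely this step that justifies the passage from Kudla's $F=\bbQ$ calculation to our totally real setting, since once it is checked, Lan's machinery and the standard Shimura uniformization apply verbatim.
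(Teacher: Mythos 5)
Your proposal is correct and follows essentially the same route as the paper: invoke standard PEL machinery for representability (the paper cites Kottwitz \S 5, noting the problem is in Case C so the Hasse-principle/failure-of-uniqueness issues of type D do not arise; you cite Lan's framework, which packages the same content), and then establish the complex uniformization by matching the Hodge-theoretic data on $H_1(A,\bbQ)$ with the skew-Hermitian $(\OO_C,*)$-module $W$. The ``determinant and cocharacter bookkeeping'' at the real places that you flag as the main obstacle is exactly what the paper's proof actually carries out: it produces explicit elements $\tau_i,\alpha_i,\beta_i\in B^\times(\bbR)$ at each of the $d$ real embeddings, exhibits the base point $J_{z_0}=(\alpha_i)_{i=1}^d$, and verifies symmetry and positive-definiteness of $\langle xJ_{z},y\rangle$ on $\mathcal{D}^+$ — so you identified the right technical crux but left it deferred where the paper does the computation.
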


\begin{proof} The representability is proved in \S 5 of \cite{kott} (our moduli problem is in case C so no further discussion is required), and we now show the correspondence of complex points following \cite{kudla}.

For each $i\in\{1,2,\cdots d\}$ and thus an inclusion $\sigma_i:F\hookrightarrow \bbR$, 
We can find $\tau_{i}=D(B)^{-1/2}\tau\in B^\times(\mathbb{R})$ so $\tau^2=-1$, and let  $\alpha_{i}=D(B)^{-1/2}\alpha.$ Choose an element $\beta_i\in B^\times(\bbR)$ such that $\beta_i\tau=-\tau\beta_i,\,\beta^\iota=-\beta$ and normalized such that $\beta^2=1$. 

Then $\Bigg(\begin{matrix}0&\beta_i\\-\beta_i&0\end{matrix}\Bigg)$ and $\Bigg(\begin{matrix} 0&\tau_i\beta_i\\-\tau_i\beta_i&0\end{matrix}\Bigg)$ forms a standard basis of a negative 2-plane on the $i$-th component of $V^d(\mathbb{R})$, denoted by $z_i$, and $$\left(\begin{matrix}0&\beta_i\\-\beta_i&0\end{matrix}\right)\left(\begin{matrix} 0&\tau_i\beta_i\\-\tau_i\beta_i&0\end{matrix}\right)=\left(\begin{matrix}\tau_i&0\\0&\tau_i\end{matrix}\right)=\alpha_i$$. 

So we write $z_0=(z_i)_{i=1}^d$ and $J_{z_0}=(\alpha_i)_{i=1}^d$. Then if $h=h_{z_0}$ be the map such that $h(i)=J_{z_0}$ then 
\[\langle xJ_{z_0},y\rangle=-\langle x,yJ_{z_0}\rangle=\langle yJ_{z_0},x\rangle,\]
and
\[\langle xJ_{z_0},x\rangle=\sum_{i=1}^d\mathrm{tr}\big(x_i'\alpha^{-1}x_i\alpha_i\big)=D(B)^{-1/2}\sum_{i=1}^d\mathrm{tr} (x_i^*x_i)>0.\]
Moreover, for $g\in G$,
\[\langle xJ_{gz_0},x\rangle=\langle xgJ_{z_0}g^{-1},x\rangle=\nu(g)^{-1}\langle xgJ_{z_0},xg\rangle.\]
This means that if we write $\mathcal{D}^+$ be the connected component of $\mathcal{D}$ containing $z_0$ then for $z\in \mathcal{D}^+$, $z=gz_0$ for some $g\in G^+$ and the pairing $\langle xJ_{z},y\rangle$ is symmetric and positive definite. 

Now for each $z\in \mathcal{D}^+$ we obtain a principally polarized abelian variety $A_z$ be the $8d$-dimensional complex structure $(W(\bbR),J_{z})$ modulo the lattice $W(\bbZ)$, and with principle polarization $\lambda$ induced by the pairing $\langle\cdot, \cdot\rangle$. It carries an action of $\OO_C$ by left multiplication (thus the homomorphism $\iota: C\to \End^0(A_z)$) and such that it is compatible with the polarization $\lambda$. Also, we have the isomorphism of the finite adelic Tate module
\[V_{\mathbb{A}^\infty}(A_z)\cong W(\hat{\bbZ})\otimes \bbQ\cong W(\mathbb{A}^\infty).\]


We can correspond the pair $(z,gK)\in \mathcal{D}^+\times G(\mathbb{A}^\infty)/K$ to the quadruple $(A_z, \iota, \lambda, \bar{\eta})$ where $\iota$ and $\lambda$ are defined as above and $\bar{\eta}$ be the $K$-class of isomorphism induced by right multiplication by $g$ on $W(\mathbb{A}^\infty)$, that is, the composition
\[V_{\mathbb{A}^\infty}(A_z)\cong W(\mathbb{A}^\infty)\xrightarrow{\cdot g} W(\mathbb{A}^\infty).\] For $\gamma\in G(\bbQ)^+$, we can establish an isomorphism between quadruples corresponding to pairs $(z,gK)$ and $(\gamma z,\gamma gK)$ via the element in $\Hom(A_z,A_{\gamma z})\otimes_\bbZ \bbQ$ defined upon $W(\bbR)$ by right multiplication by $\gamma^{-1}$, and thus we have the correspondence
\[M_K(\bbC)\cong G(\bbQ)^+\backslash\mathcal{D}^+\times G(\mathbb{A}^\infty)/K\cong G(\bbQ)\backslash\mathcal{D}\times G(\mathbb{A}^\infty)/K.\]

\end{proof}

\subsection{Proof of Theorem \ref{MB}}
Now we are able to describe the moduli problem in Theorem \ref{MB}. If $B$ is of fake type (an indefinite quaternion algebra that is division), then choose an idempotent element $e\in \OO_C$ and $eA$ gives an abelian variety of dimension $4d$ and has action by $\OO_B$. Conversely for a fake $\GSp_4$ type abelian variety $A$, $A\times A$ gives an abelian variety that has $\OO_C=M_2(\OO_B)$ action. 

On the other hand, if $B$ is of real type (so $B\cong M_2(F)$), then we may choose idempotent elements $e$ in $\OO_C$ and $e'$ in $\OO_B$ and $e'eA$ gives an abelian variety of dimension $2d$ and has action by $\OO_F$. Conversely for a real $\GSp_4$ type abelian variety $A$, $A\times A$ gives an ablian variety that has $\OO_B=M_2(\OO_F)$ action, and the product of four copies of $A$ gives an abelian variety that has $\OO_C$ action.

Therefore the moduli problems in Theorem \ref{MB} can be identified with the moduli problem described in \S 5.1 over $E$ with a certain full level-$\p\q$ structure. Smoothness easily follows from the construction of the Shimura variety, and in order to prove Theorem \ref{MB} we need to check connectedness.

Notice that the algebraic group $G\cong \mathrm{Res}_{F/\bbQ}\GSp_4$ is connected, we can deduce that $(G,\mathcal{D})$ is a PEL Shimura datum of type C (\cite{Milne}),  and connectedness now follows from the arguments in $\S 8$ of \cite{kott}, and the Hasse principle of $G$. 

\newpage
\section{Potential Modularity of $\GSp_4$ type Abelian Varieties}

\subsection{The Theorem of Moret-Bailly}

In this chapter, we first introduce the well-known theorem of Moret-Bailly, 

\begin{thm} (Proposition 3.1.1,\cite{blggt}) Let $K^{0}/K/E$ be number fields with $K^{0}/K$ and $K/E$ Galois. Suppose $S$ be a finite set of places of $E$ and $S^K$ be the set of places of $K$ above $S$. For $v\in S^K$, let $L_v'/K_v$ be a finite Galois extension with $L'_{\sigma v}=\sigma L'_v$ for $\sigma\in G_{E}$. Suppose also that $T/K$ is a smooth geometrically connected variety and that for each $v\in S^K$ we are given a non-empty, $\Gal(L_v'/K_v)$-invariant, open subset $\Omega_v\subset T(L_v')$. 

Then there exists a finite Galois extension $L/K$ and a point $P\in T(L)$ such that 
\begin{itemize}
    \item $L/E$ is Galois;
    \item $L/K$ is linearly disjoint from $K^{0}/K$;
    \item if $v\in S^K$ and $w$ a prime of $L$ above $v$ then $L_w/K_v$ is isomorphic to $L_v'/K_v$ and $P\in \Omega_v\subset T(L'_v)\cong T(L_w)$. 
\end{itemize}
\end{thm}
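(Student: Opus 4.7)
The plan is to reduce to the classical form of Moret-Bailly's theorem (which produces a global point on a smooth geometrically integral variety over a number field with prescribed local completions and open local conditions at a finite set of places, and guarantees the resulting extension is Galois over that field), and then to leverage the $G_E$-equivariance built into the hypothesis $L'_{\sigma v}=\sigma L'_v$ to upgrade from $L/K$ Galois to $L/E$ Galois. Linear disjointness from $K^{0}/K$ will be enforced by appending one further local condition at an auxiliary place outside $S$.

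First I would apply the classical Moret-Bailly theorem to $T/K$ with the local data $\{(L'_v,\Omega_v)\}_{v\in S^K}$, obtaining a finite Galois extension $L_0/K$ and a point $P\in T(L_0)$ such that for every place $w$ of $L_0$ lying over some $v\in S^K$ there is an isomorphism $L_{0,w}/K_v\cong L'_v/K_v$ sending $P$ into $\Omega_v$. A priori $L_0/E$ need not be Galois.

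The key second step is to upgrade to $L/E$ Galois while keeping the completions above $S^K$ of the prescribed isomorphism type. The cleanest route is to apply Moret-Bailly not to $T/K$ but to the Weil restriction $T'=\mathrm{Res}_{K/E}\,T$ over $E$: a point of $T'$ over an extension $L/E$ corresponds to a point of $T$ over $L\otimes_E K$, and for an $E$-place $u$ lying under places $v\in S^K$, one has $T'(E_u)=\prod_{v\mid u}T(K_v)$, so the data $\{(L'_v,\Omega_v)\}_{v\mid u}$ -- which is $\Gal(K/E)$-equivariant in $v$ by the symmetry hypothesis -- packages into a single non-empty open local condition on $T'$ at the $E$-place $u$. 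Moret-Bailly over $E$ then directly produces a finite Galois extension $L/E$ (containing $K$) together with a point realizing all the prescribed completions, so $L/E$ is automatically Galois. The $\Gal(L'_v/K_v)$-invariance of each $\Omega_v$ is what makes the packaged open condition on $T'(E_u)$ well-posed.

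Finally, linear disjointness $L\cap K^{0}=K$ is achieved by the standard trick of enlarging $S$ by an auxiliary place $v^\star$ of $E$, outside $S$ and unramified in $K^{0}$, and imposing at $v^\star$ a non-trivial local Galois condition (for example, prescribing an unramified extension of prime degree $\ell\nmid[K^{0}:K]$) that cannot be realized inside any completion of $K^{0}$ above $v^\star$. This auxiliary condition is $G_E$-invariant and compatible with smoothness of $T$ at $v^\star$, so it folds into the Moret-Bailly input without disturbing the preceding analysis, and the resulting $L$ is linearly disjoint from $K^{0}$ over $K$. The main technical obstacle is the second step: verifying that the Weil-restriction packaging really yields completions that agree exactly with the $L'_v$ and not some proper extension. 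This is precisely what the symmetry $L'_{\sigma v}=\sigma L'_v$, the $\Gal(L'_v/K_v)$-invariance of $\Omega_v$, and the identification $T'(E_u)\cong\prod_{v\mid u}T(K_v)$ are designed to guarantee.
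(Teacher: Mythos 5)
The paper does not actually supply a proof of this statement: it is quoted verbatim as Proposition 3.1.1 of \cite{blggt} and used as a black box, so there is no internal argument to compare against. Judged on its own terms, your Weil-restriction strategy is a genuine and reasonable route (the standard references tend to apply Moret--Bailly directly over $K$ and then pass to a Galois closure over $E$, checking that the $G_E$-equivariance keeps the completions above $S$ unchanged; your version moves the equivariance into the geometry of $\mathrm{Res}_{K/E} T$ instead). Your preliminary Step 1 (Moret--Bailly over $K$) is entirely superfluous, since Step 2 starts over from scratch over $E$.

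There are, however, two substantive gaps in Step 2. First, the parenthetical claim that Moret--Bailly over $E$ produces an $L$ ``(containing $K$)'' is asserted but not justified. The local conditions at $u\in S$ only force $L_w\supset K_v$, i.e.\ $L$ to contain the relevant completions of $K$; they do not force $K\subset L$ globally. You must replace $L$ by $KL$, observe that $KL/E$ is still Galois, re-identify $(KL)_{w'}$ with $L'_v$, and check that linear disjointness of $L/E$ from $K^0/E$ implies $KL\cap K^0=K$ --- none of which is automatic without saying so. Second, you gesture at ``packaging'' $\{(L'_v,\Omega_v)\}_{v\mid u}$ into a single local datum on $T'=\mathrm{Res}_{K/E}T$ but do not identify the local field $M_u/E_u$ or verify it is well-posed. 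The key observation you are missing is that the hypothesis $L'_{\sigma v}=\sigma L'_v$, applied to $\sigma$ in the decomposition group at a chosen $v_0\mid u$, forces $L'_{v_0}/E_u$ to be Galois (not merely $L'_{v_0}/K_{v_0}$), and for the other $v\mid u$ the equivariance identifies all $L'_v$ with the same subfield of $\bar E_u$. This is precisely what makes $M_u:=L'_{v_0}$ a legitimate Galois extension of $E_u$, makes $T'(M_u)=T(K\otimes_E M_u)$ decompose into the expected factors, and makes the resulting open set $\Omega'_u$ genuinely $\Gal(M_u/E_u)$-invariant; without it the reduction to Moret--Bailly over $E$ does not go through. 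Finally, the auxiliary-place device you describe for linear disjointness is not quite right as stated (an unramified extension of prime degree $\ell\nmid[K^0:K]$ can still sit inside a completion of $K^0$), but this is moot: the version of Moret--Bailly you are invoking already has the linear-disjointness conclusion built in.
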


\vskip 1 em
The following proposition is a consequence of the theorem of Moret-Bailly, which is originally proved in \cite{cal12} and generalized in \cite{BCGP}. 
\begin{lem}\label{setup}
Let $G$ be a finite group, $E/\bbQ$ a finite Galois extension, and $S$ a set of primes of $E$. Let $E'/E,\,E^{0}/E$ be finite extensions, linearly disjoint from each other. 

Let $S'$ be the set of places in $E'$ over $S$. For each finite prime $v\in S'$, let $H'_v/E'_v$ be a finite Galois extension with a fixed inclusion $\phi_{v}:\Gal(H'_v/E'_v)\to G$ with image $D_v$. For each real prime $v\in S'$, let $c_v\in G$ be an element of order dividing 2.

Then there exists a number field $K/E$ and a finite Galois extension $L/K$ such that if $K'=KE'$ and $L'=LE'$, then
\begin{enumerate}
    \item[(1)] There is an isomorphism $\Gal(L/K)\to G$;
    \item[(2)] $L/E$ is linearly disjoint from $E^{0}E'/E$;
    \item[(3)] All places of $S'$ splits completely in $K'$;
    \item[(4)] For all local places $w$ above $v\in S'$, the local extension $L'_w/K'_w$ is equal to $H'_v/E'_v$, with a commutative diagram
    \[\begin{tikzcd}
    \Gal(L'_w/K'_w) \arrow[r] \arrow[d, Rightarrow, no head] & D_w \arrow[d, Rightarrow, no head] \\
    \Gal(H'_v/E'_v) \arrow[r] \arrow[r, "\phi_v"]            & D_v \end{tikzcd}
    \]
    \item[(5)] For all real places $w$ of $K'$ above $v\in S'$, complex conjugation $c_w\in G$ is conjugate to $c_v$.
\end{enumerate}
\end{lem}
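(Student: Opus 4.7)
The strategy is to deduce this from the Moret-Bailly theorem just stated by constructing a smooth geometrically connected $E$-variety $T$ whose points parametrize $G$-Galois covers, together with non-empty Galois-invariant local open subsets $\Omega_v$ encoding the prescribed data; the Moret-Bailly point then produces both $K$ and $L/K$ simultaneously.

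For the moduli space, I fix a faithful embedding $G\hookrightarrow \GL_N$ (for instance via the regular representation) and set $T:=(\GL_N/G)_E$. This is smooth and geometrically connected over $E$, and an $F$-point (for any $F/E$) corresponds to a $G$-torsor over $F$, equivalently to a continuous homomorphism $\rho\colon G_F\to G$ up to $G$-conjugacy; the fixed field of $\ker\rho$ inside $\bar F$ is a Galois extension of $F$ with Galois group identified with $\mathrm{im}(\rho)$. Surjectivity of $\rho$ (required for conclusion (1)) will be enforced either by restricting to the open subscheme of ``full monodromy'' points, or by imposing an auxiliary local condition at a prime $v_0\notin S$ that ramifies in a way generating $G$.

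For each finite $v\in S$ of $E$, I want $v$ to split completely in $K/E$, so I take Moret-Bailly's local extension to be $L'_v=E_v$. The compatible family $\{\phi_{v'}\}_{v'\mid v}$ glues, using the $G_E$-equivariance $L'_{\sigma v'}=\sigma L'_{v'}$ to conjugate-match the $\phi_{v'}$ across the $G_{E_v}$-action on the places $v'\mid v$, into a single continuous homomorphism $\rho_v\colon G_{E_v}\to G$ with $\rho_v|_{G_{E'_{v'}}}$ factoring through $\phi_{v'}$ for every $v'\mid v$. The $G$-torsor of $\rho_v$ gives a point of $T(E_v)$, and I take $\Omega_v\subset T(E_v)$ to be the open subset of $G$-torsors whose restriction to each $E'_{v'}$ matches the one coming from $\phi_{v'}$; this is non-empty by construction and Galois-invariant. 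For real $v\in S$, the prescribed $c_{v'}$'s likewise carve out a non-empty open $\Omega_v\subset T(\bbR)$, using that the complex-conjugation class is an open condition.

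I now apply Moret-Bailly to $T/E$, with Galois tower $K^0_{\mathrm{MB}}/E/\bbQ$ where $K^0_{\mathrm{MB}}$ is the Galois closure over $E$ of $E^{0}E'$ (enlarged as necessary), and with the local data $\{L'_v,\Omega_v\}_{v\in S}$. The output is a Galois extension $K/E$ and a point $P\in T(K)$ such that $K/E$ is linearly disjoint from $K^0_{\mathrm{MB}}/E$ (hence from $E^{0}E'/E$, giving conclusion (2)), every $v\in S$ splits completely in $K/E$ (so every $v'\in S'$ splits completely in $K'=KE'$, giving (3)), and the condition $P\in\Omega_v$ forces the local $G$-torsor of $P$ at each $w\mid v'$ to equal the one coming from $\phi_{v'}$; this, together with the surjectivity arrangement, yields the desired $L/K$ satisfying (1) and (4), while the real-place conditions give (5). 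The main obstacle I anticipate is the glueing step, namely constructing $\rho_v$ from $\{\phi_{v'}\}_{v'\mid v}$: this reduces to a transitivity/cocycle check on the decomposition group of $v$ in a Galois closure of $E'$, made possible by the $G_E$-equivariance hypothesis. A secondary difficulty is ensuring that the full output $L$ (not merely $K$) is linearly disjoint from $E^{0}E'$ over $E$, handled by enlarging $K^0_{\mathrm{MB}}$ to control also the relevant degree-$|G|$ extensions, or by the auxiliary ramification at $v_0$.
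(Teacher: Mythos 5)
The central gap is in the gluing step, which you flag as ``the main obstacle'' but then wave away. You want to assemble the prescribed local data $\{\phi_{v'}\}_{v'\mid v}$ at the $E'$-places $v'$ over a fixed $v\in S$ into a single homomorphism $\rho_v\colon G_{E_v}\to G$, so that one Moret--Bailly local condition $\Omega_v\subset T(E_v)$ at the $E$-place $v$ carries all of them. No such gluing exists in general, and the lemma imposes no compatibility among the $\phi_{v'}$ as $v'$ varies over $v$. For a concrete failure, let $E'/E$ be quadratic with $v$ split, $v=v'_1v'_2$, so that $E'_{v'_1}=E'_{v'_2}=E_v$ and $\phi_{v'_1},\phi_{v'_2}$ are both defined on $G_{E_v}$ itself; they can then be chosen non-conjugate in $G$. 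This is exactly the situation in the paper's own application in Lemma \ref{Induced}: the two places $v_1,v_2\mid v$ of $E'=F'$ are assigned two-dimensional local data with distinct unit Frobenius eigenvalues $\bar\alpha\neq\bar\beta$, which are not $G$-conjugate. A homomorphism $\rho_v$ restricted to $G_{E'_{v'_1}}=G_{E'_{v'_2}}=G_{E_v}$ is just $\rho_v$ again, so it cannot simultaneously agree (even up to conjugacy) with two non-conjugate $\phi_{v'_i}$. Your appeal to a ``$G_E$-equivariance hypothesis'' does not rescue this: that is a constraint you must \emph{arrange} for the local data fed into Moret--Bailly, not a compatibility the $\phi_{v'}$ come equipped with. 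To make this strategy viable one needs a variety whose $F$-points parametrize $G$-covers of $FE'$ rather than of $F$ --- for instance a restriction of scalars $\mathrm{Res}_{E'/E}(\GL_N/G)$ --- so that the distinct $\phi_{v'}$ at distinct $E'$-places over $v$ are imposed as conditions at the $K'$-places rather than conflated into one condition at the single $K$-place below them; a single open $\Omega_v\subset T(E_v)$ with $T=\GL_N/G$ does not carry enough information.

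Two secondary issues. First, there is no ``open subscheme of full-monodromy points'' in $\GL_N/G$: surjectivity of $G_F\to G$ is not a Zariski-open condition on $T(F)$, so it must be forced entirely by auxiliary local conditions, and in general one needs one auxiliary unramified prime per conjugacy class of $G$ (so that the image meets every class and hence, by Jordan's lemma, cannot lie in a proper subgroup) rather than a single prime $v_0$. Second, the Moret--Bailly theorem controls linear disjointness of its output field from a prescribed $K^0_{\mathrm{MB}}$, but the field $L$ is built afterwards from the $G$-torsor over $K$; linear disjointness of $L$ from $E^0E'$ over $E$ is not automatic and must also be arranged by auxiliary local conditions (e.g.\ ramifying the $G$-cover at a prime that splits completely in $E^0E'$). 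The paper itself proves this lemma only by citing \cite[Prop.~9.1.12]{BCGP}, so there is no internal proof to compare with; but as written your argument does not close these gaps.
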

\begin{proof} This is Proposition 9.1.12 of \cite{BCGP}.
\end{proof}
\vskip 1 em
We are going to use this lemma to show that in our construction in Lemma \ref{diagonallift}, the diagonal representations $\bar{\rho}_{\tilde{A}_w,\q}$ restricted to $G_{F_w}$ and all $G_{F_v}$'s are potentially induced representations.

\begin{lem} (cf. Lemma 9.2.7\cite{BCGP}) \label{Induced} Let $A/F$ be a strongly positive ordinary  $\GSp_4$-type abelian variety with descent data to $\bbQ$ and satisfying the large image property, and let $p, q$ be good primes chosen as in Lemma \ref{diagonallift}. Fix a real quadratic extension $F'/F$ such that $p$ and $q$ split completely and that it is linearly disjoint from the kernel of the action $G_F$ on $A[\p]$ and $A[\q]$. Then we can find a totally real Galois extension $F_1/F$ of $F$ and a quadratic extension $F_2/F_1:=F_1F'/F_1$, and a representation $\bar{r}:G_{F_2}\to \GL_2(\bbF_q)$, 
such that 

\begin{enumerate}
    \item [(1)]$p,\,q$ splits totally in $F_1$;
    \item [(2)]$F_1/F$ is linearly disjoint from $F'/F$ and the kernel of the action $G_\bbQ$ on $A[\p]$ and $A[\q]$; 
    \item [(3)]$\det \bar{r}=\bar{\epsilon}^{-1}$.
    \item [(4)]$\bar{r}(G_{F_2})=\GL_2(\bbF_q)$, and the projective image of $\bar{r}$ is not equal to its conjugate under $\Gal(F_2/F_1)$. 
    \item [(5)]Let $\bar{\rho}=\mathrm{Ind}_{G_{F_2}}^{G_{F_1}}\bar{r}$ with similitude character $\bar{\epsilon}^{-1}$, then it is vast and tidy, and satisfies\begin{enumerate}
        \item [(i)] For all primes $w\mid q$ of $F$ and $w_1\mid w$ of $F_1$, $$\bar{\rho}|_{G_{F_{1,w_1}}}\cong \bar\rho_{\tilde{A}_w,\q}|_{G_{F_w}}=\big(\bar\rho_{A,\q}|_{G_{F_w}}\big)^{ss},$$
        \item [(ii)] For all primes $v\mid p$ of $F$ and $v_1\mid v$ of $F_1$, 
        $$\bar{\rho}|_{G_{F_{1,v_1}}}\cong\bar\rho_{\tilde{A}_w,
        \q}|_{G_{F_v}}=\bar\rho_{A,\q}|_{G_{F_v}}. $$
    \end{enumerate}
\end{enumerate}

\end{lem}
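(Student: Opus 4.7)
The plan is to obtain $(F_1, \bar r)$ through a single application of the Moret-Bailly-style Lemma~\ref{setup} with $G = \GL_2(\bbF_q)$, producing $K = F_1$ and a Galois extension $L/K$ cut out by a representation $\bar r: G_{F_2} \to \GL_2(\bbF_q)$ (where $F_2 = F_1 F' = KE'$). The key geometric input is that since $p, q$ split totally in both $F_1$ and $F'$, they split in $F_2$, so at every prime $v_1$ of $F_1$ above $p$ or $q$ the two primes $v_2, v_2'$ of $F_2$ above $v_1$ are exchanged by $\Gal(F_2/F_1)$. By Mackey, $(\mathrm{Ind}_{G_{F_2}}^{G_{F_1}} \bar r)|_{G_{F_{1,v_1}}} = \bar r|_{G_{F_{2,v_2}}} \oplus \bar r|_{G_{F_{2,v_2'}}}$. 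To realize the target diagonal $4$-tuple $(\bar\chi_1, \bar\chi_2, \bar\epsilon^{-1}\bar\chi_2^{-1}, \bar\epsilon^{-1}\bar\chi_1^{-1})$ from Corollary~\ref{diagonallift}, I pair the characters into the two $\GL_2$-blocks $\mathrm{diag}(\bar\chi_i, \bar\epsilon^{-1}\bar\chi_i^{-1})$ for $i = 1, 2$, each having determinant $\bar\epsilon^{-1}$, and assign one block to each of the two conjugate places of $F'$ above every $v_0 \in S$.

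I then apply Lemma~\ref{setup} with $E = F$, $E' = F'$, $G = \GL_2(\bbF_q)$, and $E^0$ the compositum of $F'$ with the fixed field of the kernel of the $G_F$-action on $A[\p] \oplus A[\q]$. Here $S$ is the set of primes of $F$ above $p$ and $q$; at each $v$ of $F'$ above $S$, the prescribed $H'_v/F'_v$ with $\phi_v: \Gal(H'_v/F'_v) \hookrightarrow \GL_2(\bbF_q)$ realizes the chosen diagonal block at $v$, and at each real place we take $c_v = 1$ to ensure $F_1$ is totally real. The lemma produces $F_1/F$ satisfying (1) and (2) together with a surjection $\bar r: G_{F_2} \to \GL_2(\bbF_q)$ whose local restrictions match the prescribed data, establishing (4) and the two local compatibilities (i), (ii) of (5) simultaneously.

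Condition (3) $\det\bar r = \bar\epsilon^{-1}$ holds at every prescribed place by the block choice; to upgrade to global equality I would either twist $\bar r$ by a finite-order character $\chi$ with $\chi^2 \det\bar r = \bar\epsilon^{-1}$ (adjusting the local data accordingly), or incorporate an auxiliary split prime into the Moret-Bailly data at which $\det\bar r$ is pinned down. Projective distinctness of $\bar r$ and $\bar r^\sigma$ in (4) follows because $\bar\chi_1 \neq \bar\chi_2$ at a split prime above $p$ by $\p$-distinguished ordinariness, so the two blocks are non-conjugate; the scalar-multiple exceptions ($\bar\chi_1 = \pm \bar\chi_2$ or $\bar\epsilon \bar\chi_1 \bar\chi_2$ of order dividing $2$) can be ruled out by prescribing one more auxiliary local datum. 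Once (1)--(4) hold, vastness and tidiness in (5) follow immediately from Lemma~\ref{vtinduced}, since $q > 3$, $F_2$ is unramified at $q$ (which splits completely), $\det\bar r^\sigma = \det\bar r = \bar\epsilon^{-1}$, and the projective images differ.

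The main obstacle is executing a single Moret-Bailly application that simultaneously yields full image $\GL_2(\bbF_q)$, globally correct determinant $\bar\epsilon^{-1}$, and the exact prescribed diagonal local representation at every place above $p$ and $q$; this is precisely the delicate content of the analogue \cite[Lemma~9.2.7]{BCGP}. A secondary subtlety is consistency of the block pairing with the $\Gal(F_2/F_1)$-swap of conjugate places of $F'$, but since the unordered pair of blocks at a prime $v_0$ of $F$ depends only on $v_0$, the assignment of distinct blocks to the two conjugate places of $F'$ makes the swap automatic.
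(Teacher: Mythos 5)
Your proposal follows the same overall strategy as the paper -- apply Lemma~\ref{setup} with $G = \GL_2(\bbF_q)$, pair the four diagonal characters at each place above $p$ and $q$ into two $\GL_2$-blocks of determinant $\bar\epsilon^{-1}$, assign one block to each of the two conjugate places of $F'=E'$ above a given prime of $F$, and then induce. You also correctly identify, but do not resolve, the two remaining issues (global determinant, projective distinctness); however, there is also a concrete wrong step.

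The wrong step is the choice $c_v = 1$ at archimedean places. Since you want $\det\bar r = \bar\epsilon^{-1}$ and $\bar\epsilon^{-1}(c) = -1$ for complex conjugation $c$, the image of $c$ under $\bar r$ must have determinant $-1$; with $c_v = 1$ one has $\det\bar r(c_v) = 1$, and this cannot be repaired by a twist because any finite-order character $\chi$ has $\chi^2(c) = 1$, so $\det(\chi\otimes\bar r)(c) = \chi^2(c)\det\bar r(c) = 1 \ne -1$. The paper instead prescribes $c_v$ with eigenvalues $1$ and $-1$. Your stated motivation -- ``to ensure $F_1$ is totally real'' -- is also a misconception: totally realness of $F_1 = K$ is guaranteed by condition~(3) of Lemma~\ref{setup} (all places of $S'$, including the archimedean ones, split completely in $K'$), not by the value of $c_v$.

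On the two issues you flag but leave open: the paper resolves both by including an auxiliary prime $\ell\nmid pq$ in $S$, and then prescribing $\phi_{l_1}$ unramified while $\mathrm{Proj}\,\phi_{l_2}$ ramified at the two primes $l_1, l_2$ of $E'$ above $\ell$. This enforces $\mathrm{Proj}\,\bar r \ncong \mathrm{Proj}\,\bar r^\sigma$ directly (giving~(4)), rather than relying on the inequality $\bar\chi_1 \ne \bar\chi_2$ at a split prime above $p$, which by itself does not rule out the projectivizations being conjugate. For the global determinant~(3), the paper executes your first option: after replacing $F$ by a totally real quadratic extension in which $p, q, \ell$ split, the totally even character $\bar\epsilon\det\bar r$ acquires a square root over $F_1$, and twisting $\bar r$ by this square root forces $\det\bar r = \bar\epsilon^{-1}$ without disturbing the local conditions. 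You should incorporate the auxiliary prime into $S$ at the outset and correct the archimedean datum to $c_v = \mathrm{diag}(1,-1)$.
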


\begin{proof}
Fix a prime $\ell\nmid pq$ in $F$. We are going to apply Lemma \ref{setup} with $G=\GL_2(\bbF_q)$, and with $E=F$, $E^0/F$ be the extension containing the kernels of $\bar{\rho}_{A,v}$ and $\bar{\rho}_{A,w}$ in $G_{F}$. Let $E'/F$ be any quadratic extension linearly disjoint from $E^0/F$. Let $S=\{\ell,p,q,\infty\}$. 
For infinite places $v$, we let $c_v$ to have eigenvalues 1 and $-1$. 

For prime $v\mid p$ in $F$, if we write $v=v_1v_2$ in $E'$, then we can let $\phi_{v_1}$ to be the representation corresponding to $\Bigg(\begin{matrix}\lambda_{\bar{\alpha}}&0\\0&\bar{\epsilon}^{-1}\lambda_{\bar{\alpha}}^{-1}\end{matrix}\Bigg)$ and $\phi_{v_2}$ corresponding to $\Bigg(\begin{matrix}\lambda_{\bar{\beta}}&0\\0&\bar{\epsilon}^{-1}\lambda_{\bar{\beta}}^{-1}\end{matrix}\Bigg)$. 
Similarly, we can write $w=w_1w_2$ in $E'$ for $w\mid q$ and let $\phi_{w_1}$ correspond to $\Bigg(\begin{matrix}\bar{\chi}_1&0\\0&\bar{\epsilon}^{-1}\bar{\chi}_1^{-1}\end{matrix}\Bigg)$ and $\phi_{w_2}$ correspond to $\Bigg(\begin{matrix}\bar{\chi}_2&0\\0&\bar{\epsilon}^{-1}\bar{\chi}_2^{-1}\end{matrix}\Bigg)$. 

Finally for $\ell=l_1l_2$ in $E'$, we let $\phi_{l_1}$ and $\phi_{l_2}$ to be such that they both have determinant $\bar{\epsilon}^{-1}$, $\phi_{l_1}$ be unramified while $\mathrm{Proj}\, \phi_{l_2}$ be ramified. 

Given the above setup, we can find a totally real extension $F_1$ (corresponding to the field $K$ as in Lemma \ref{setup}), and $F_2=F'F_1$ be the quadratic extension that satisfies (1) and (2), together with a representation $\bar{r}:G_{F_1}\to \GL_2(\bbF_q)$ that is vast and tidy due to Lemma \ref{vtinduced}. It satisfies (5) due to the construction in Lemma \ref{diagonallift}. (4) is satisfied because of the behavior of $\bar{r}$ on primes above $\ell$. By further replacing $F$ by a totally real quadratic extension (in which $p,q,\ell$ totally split) we can assume that the character $\bar{\epsilon}\det(\bar{r})$ has a square root in $F_1$. Replacing $\bar{r}$ by its twist by the square root of $\bar{\epsilon}\det(\bar{r})$ gives a representation that satisfies (3). 

\end{proof}

\vskip 1 em

\begin{thm} \label{trick}
Given a non-CM abelian variety $A/F$ of $\GSp_4$-type with descent data to $\bbQ$ that is strongly positive ordinary and has large image property. Let the totally real field $K$ be $\End^0(A)$ if $A$ is of real type and the base field of the quarternion algebra $D=\End^0(A)$ if $A$ is of fake type. Then we can find primes $p,q$ totally split in $F, K$ (and unramified in $D$ if $A$ is of fake type) and an abelian variety $\mathcal{A}/F'$, where $F'$ is a totally real extension of $F$ in which $p,q$ totally split, such that there exists $\p\mid p,\,\q\mid q$ in $K$ such that $\mathcal{A}[\p]\cong A[\p]$, and $\mathcal{A}[\q]$ is induced as representations. 
\end{thm}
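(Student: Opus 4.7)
The plan is to apply the theorem of Moret-Bailly (stated at the beginning of \S 6.1) to a suitable twist of the moduli space $Y$ from Theorem \ref{MB}, with local conditions crafted to interpolate between $A$ itself at primes above $p$ and the induced representation from Lemma \ref{Induced} at primes above $q$.

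First, I would invoke Lemma \ref{suff1} together with the end-of-\S 3.2 discussion to select good primes $p,q$ of $\bbQ$ with primes $\p\mid p$ and $\q\mid q$ of $K$ for which $A$ is both $\p$- and $\q$-distinguished ordinary, and such that $\q$ does not divide the discriminant of $Q_v(X)$ for any $v\mid p$ in $F$. For each $w\mid q$ in $F$, Corollary \ref{diagonallift} then supplies the $\q$-canonical lift $\tilde{A}_w/F_w$ whose mod-$\q$ representation is diagonal when restricted to $G_{F_w}$ and to every $G_{F_v}$ with $v\mid p$. Next, Lemma \ref{Induced} produces a totally real Galois extension $F_1/F$ in which $p$ and $q$ split completely, a quadratic extension $F_2/F_1$, and a vast-and-tidy representation $\bar{\rho}=\mathrm{Ind}_{G_{F_2}}^{G_{F_1}}\bar{r}:G_{F_1}\to\GSp_4(\bbF_q)$ of similitude $\bar{\epsilon}^{-1}$ whose local restrictions agree with $\bar{\rho}_{\tilde{A}_w,\q}|_{G_{F_w}}$ at primes over $q$ and with $\bar{\rho}_{A,\q}|_{G_{F_v}}$ at primes over $p$.

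Next, I would form the twist $Y_{A[\p],\bar{\rho}}/F_1$ of the moduli space of Theorem \ref{MB}, parametrizing triples $(\mathcal{A},\alpha_\p,\alpha_\q)$ where $\mathcal{A}$ is a $\GSp_4$-type abelian variety over the base and $\alpha_\p:\mathcal{A}[\p]\xrightarrow{\sim}A[\p]|_{G_{F_1}}$, $\alpha_\q:\mathcal{A}[\q]\xrightarrow{\sim}\bar{\rho}$ are $G_{F_1}$-equivariant symplectic isomorphisms. Twisting by these level structures preserves the conclusion of Theorem \ref{MB}, so $Y_{A[\p],\bar{\rho}}$ is smooth and geometrically connected over $F_1$. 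At each $v\mid p$ in $F_1$ the base change $A_{F_{1,v}}$ defines a local point (the level-$\p$ structure is tautological and the level-$\q$ structure comes from the identification in Lemma \ref{Induced}(5)(ii)); at each $w\mid q$ in $F_1$ the canonical lift $\tilde{A}_w$ defines a local point via Corollary \ref{diagonallift} and Lemma \ref{Induced}(5)(i); at each infinite place I prescribe an open neighborhood forcing real embeddings.

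Finally, Moret-Bailly yields a totally real extension $F'/F_1$ in which $p$ and $q$ still split completely, together with a point of $Y_{A[\p],\bar{\rho}}(F')$ approximating the prescribed local data, and unwinding the moduli interpretation produces the desired abelian variety $\mathcal{A}/F'$ with $\mathcal{A}[\p]\cong A[\p]$ and $\mathcal{A}[\q]\cong\bar{\rho}$, the latter being induced from $G_{F'F_2}$. The main obstacle is ensuring that the prescribed local points genuinely live on the \emph{twisted} moduli space — this is exactly what the diagonal shape of $\bar{\rho}_{\tilde{A}_w,\q}|_{G_{F_w}}$ secured in Corollary \ref{diagonallift} and the matching conclusions of Lemma \ref{Induced}(5) are designed to provide. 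Working on the twist rather than $Y$ itself is essential because we need to control the \emph{Galois module} structure on the torsion, not merely its isomorphism class as an abstract group scheme.
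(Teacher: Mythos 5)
Your proposal takes essentially the same route as the paper: select good primes via Lemma \ref{suff1}, take the $\q$-canonical lifts $\tilde{A}_w$ from Corollary \ref{diagonallift}, use Lemma \ref{Induced} to get $F_1$, $F_2$, and the induced $\bar{\rho}$, set up the moduli space $Y/F_1$ with level structures matching $A[\p]$ and $\bar{\rho}$, verify non-emptiness of the local conditions, and apply Moret-Bailly. Your framing of $Y/F_1$ as a "twist" of the abstract-level moduli space of Theorem \ref{MB} is exactly what the paper does implicitly when it parametrizes symplectic isomorphisms onto fixed Galois modules rather than onto abstract group schemes.

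Two details in your write-up need adjustment to match the paper and to make the argument downstream-compatible. First, in the fake case the level-$\q$ structure should identify $\mathcal{A}[\q]$ with $(\bar{\rho}^\vee)^2$ rather than a single copy of $\bar{\rho}$ (and one should be careful about the dual, since the Tate module is dual to $H^1$): your moduli problem as stated only covers the real case. Second, for the application in Theorem \ref{potmod} you must impose that the open sets $\Omega_\lambda$ at finite $\lambda\mid pq$ are the ordinary loci $Y^{\mathrm{ord}}(F_{1,\lambda})$, not merely nonempty opens containing your chosen local points; Moret-Bailly only guarantees the global point lands in the prescribed opens, so without this condition the resulting $\mathcal{A}$ need not have ordinary reduction at $p$ and $q$, which is needed to run the modularity lifting of Theorem \ref{modular}. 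Both are small fixes within your framework, which is otherwise correct and identical in structure to the paper's.
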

\begin{proof}
We choose the good primes $p$ and $q$, together with $\p\mid p$ and $\q\mid q$ in $K$. For $w\mid q$ in $F$, we can find a $\q$-canonical lifting $\tilde{A}_w$ as in Lemma \ref{diagonallift}. Then from Lemma \ref{Induced} there exists a totally real extension $F_1/F$ such that  $\tilde{A}_w[\q]$ gives the dual of the induced representation $\bar{\rho}=\mathrm{Ind}_{G_{F_2}}^{G_{F_1}}\bar{r}$.

Now let $Y/F_1$ to be the moduli space of all triples $(B,\iota_\p,\iota_\q)$ consisting of $\GSp_4$-type abelian varieties $B$ with endomorphism ring either $\OO_K$ or $\OO_D$, together with symplectic isomorphisms
\begin{itemize}
    \item $\iota_\p:B[\p]\to A[\p]|_{G_{F_1}}$;
    \item $\iota_\q:B[\q]\to \bar{\rho}^\vee$ if $B$ is of real type, or $\iota_\q:B[\q]\to(\bar{\rho}^\vee)^2$ if $B$ is of fake type,
\end{itemize} 
as in \S5. For finite primes $\lambda$, let $\Omega_\lambda=Y^\mathrm{ord}(F_{1,\lambda})\subset Y(F_{1,\lambda})$ be the subspace of those that has good ordinary reduction. Then for every prime $\lambda\mid pq$, $\Omega_\lambda\neq\emptyset$, since we can choose $A$ itself for $\lambda\mid p$, and $\tilde{A}_w$ for $\lambda\mid w\mid q$ due to the construction in Lemma \ref{diagonallift}. For primes $\lambda\mid\infty, Y(F_{1,\lambda})$ is also nonempty because of the assumption $\det \bar{r}=\bar{\epsilon}^{-1}$.  

By the theorem of Moret-Bailly, we apply Theorem 6.1 wth $E=F, K=F_1$, $K^0$ be the compositum of $F_2$ and the kernels of $G_F$ on $A[\p]$, $\tilde{A}_w[\q]$, the extensions $L_v'/K_v$ be trivial, and $T/K=Y/F_1$ be the smooth geometrically connected variety, together with the open subsets $\Omega_v$'s.

Thus we can find a totally real field $F'/F$, linearly disjoint from $F_2/F_1$ and the kernels of $G_F$ on $A[\p]$, $\tilde{A}_w[\q]$, such that $p,q$ splits completely, and $Y(F')\cap\bigcap_{\lambda\mid pq}\Omega_\lambda$ is non-empty. This means that there exists an abelian variety $\mathcal{A}/F'$ such that $\mathcal{A}$ has good ordinary reduction mod every prime above $p,q$, and such that $\mathcal{A}[\p]\cong A[\p]$, and $\mathcal{A}[\q]$ is induced as Galois representation. 

\end{proof}

\begin{obs}\label{rem}
Note that when $K$ is $\bbQ$, which is the case where $A$ is an abelian surface or fake abelian surface, then we no longer need the descent data and the ordinariness assumption to find the good primes $p$ and $q$ due to Theorem \ref{suff}. 

In these cases, we have $\p=p$ and $\q=q$, so the $\q$-canonical lifts $\tilde{A}_w$ become the usual canonical lift of $\bar{A}/F_{(w)}$ to $F_w$, and the above Lemma \ref{Induced} becomes Lemma 9.2.7 in \cite{BCGP}. Theorem \ref{trick} holds due to the smoothess and connectedness on moduli spaces of abelian surfaces or fake abelian surfaces with full level-$pq$ structure. 
\end{obs}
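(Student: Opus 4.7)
The plan is to verify the three assertions of the remark by tracing through what each ingredient of \S4--\S6 reduces to when $K=\bbQ$, and confirming that nothing new needs to be proved beyond specialization. First, I would observe that when $A$ is an abelian surface with $\End(A)=\bbZ$ or a fake abelian surface with $\End(A)$ an order in a quaternion algebra $D/\bbQ$, the hypothesis $K=\bbQ$ makes Lemma \ref{suff} directly applicable: its conclusion is that good primes have relative density one among the primes splitting totally in $F$, with no descent data or strongly positive ordinarity input. The reason Lemma \ref{suff1} needed those extra hypotheses was to promote a single prime $\p \mid p$ being distinguished-ordinary into a statement about all $\p \mid p$ via the $\Gal(F/\bbQ)$-conjugacy $a_v^\sigma=\chi(\sigma)a_v$; when $K=\bbQ$ there is a unique prime $\p=p$ above each rational $p$, so this promotion step is trivial and the density-one statement is immediate from the no-linear-relation result Lemma \ref{nolinear}.

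Next I would verify that the Serre--Tate story of \S4 collapses to the classical picture. Since $\p=p$, the decomposition $A[p^\infty]=A[\p^\infty]\oplus A[\bar{\p}^\infty]$ becomes $A[p^\infty]=A[p^\infty]$ with no non-trivial second summand, and the $\p$-canonical lift produced in Lemma \ref{canonicallift} is the ordinary Serre--Tate canonical lift of $\bar A/F_{(w)}$ to $F_w$. The $\OO_K$-action disappears in the real case (since $\OO_K=\bbZ$), while in the fake case the $\OO_D$-action persists but is exactly the setting treated in \cite{BCGP}. Correspondingly, Corollary \ref{diagonallift} reproduces the diagonalization of $\bar\rho_{\tilde A_w,q}|_{G_{F_w}}$ and $\bar\rho_{\tilde A_w,q}|_{G_{F_v}}$ used in \cite[\S9]{BCGP}. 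The construction of $F_1$, $F_2$, and $\bar r$ in Lemma \ref{Induced} then uses Lemma \ref{setup} (i.e.\ Moret--Bailly) with exactly the same local data as in \cite[Lemma 9.2.7]{BCGP}, so the two lemmas are literally the same statement in this range of parameters.

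Finally, for Theorem \ref{trick} I would apply Moret--Bailly to the moduli space $Y/F_1$ of triples $(B,\iota_p,\iota_q)$, where $B$ is either a principally polarized abelian surface or a fake abelian surface with $\OO_D$-action, and $\iota_p,\iota_q$ are full level-$p$ and level-$q$ structures. Smoothness and geometric connectedness of $Y$ are the $K=\bbQ$ case of Theorem \ref{MB}, but in fact they are already established by direct Shimura-variety arguments in \cite{BCGP} (Siegel modular threefolds and their quaternionic analogues), so no new input is needed. The nonemptiness of the local open sets $\Omega_\lambda\subset Y(F_{1,\lambda})$ is witnessed by $A$ itself at $\lambda\mid p$ and by the classical canonical lift $\tilde A_w$ at $\lambda\mid q$, exactly as in the general proof of Theorem \ref{trick}. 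Thus Moret--Bailly produces the desired $\mathcal{A}/F'$ with $\mathcal{A}[p]\cong A[p]$ and $\mathcal{A}[q]$ induced.

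The only genuine point to check carefully, rather than a serious obstacle, is that in the fake abelian surface case the canonical lift of \S4 respects the $\OO_D$-structure and that the moduli problem on $Y$ is rigidified correctly by the full level-$pq$ structure so that it is represented by a smooth geometrically connected scheme; both are covered by the construction in \S5 (or directly in \cite{BCGP}), so in essence the remark is a bookkeeping simplification rather than a new theorem.
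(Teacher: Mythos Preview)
Your proposal is correct and follows exactly the same line of reasoning as the paper; the remark itself is the paper's entire justification, and you have simply expanded its three sentences (reference to Lemma~\ref{suff}, the collapse $\p=p$, $\q=q$ reducing \S4 and Lemma~\ref{Induced} to \cite[Lemma~9.2.7]{BCGP}, and the appeal to smoothness and connectedness of the moduli space) into a fuller verification.
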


\subsection{Potential Modularity Theorems}

First we recall the Galois representations associated to automorphic $\GSp_4$ representations. 

\begin{thm}(\cite{BCGP}, Theorem 2.7.1-2)
Suppose that $F$ is a totally real field and let $\pi$ be a cuspidal automorphic representation of $\GSp_4(\mathbb{A}_F)$ of parallel weight 2 and has central character $|\cdot|^2$. 

Fix a prime $p$ and a prime $\lambda\mid p$ in $K$, then there is a continuous semisimple representation $\rho_{\pi,\lambda}:G_F\to \GSp_4(\bar{K}_\lambda)$ satisfying the following properties:
\begin{enumerate}
    \item[(1)] The similitude character is $\nu(\rho_{\pi,\lambda})=\epsilon^{-1}$.
    \item[(2)] For each prime $v$ of $F$, we have
    \[\mathrm{WD}(\rho_{\pi,\lambda}|_{G_{F_v}})^{ss}\cong \mathrm{rec}_\lambda(\pi_v\otimes|\nu|^{-3/2})^{ss},\]
    where the $\mathrm{rec}_\lambda$ map that sends admissible irreducible complex representations of $\GSp_4(F_v)$ to a Frobenius-semisimple complex Weil–Deligne representations of the Weil group $W_{F_v}$ given by the local Langlands correspondence \cite{GT}. (Here we are fixing an embedding of coefficients $\bar{K}_\lambda\to \bbC$.)
    
    \item[(3)] If $v\mid p$, then $\rho_{\pi,\lambda}|_{G_{F_v}}$ is de Rham with Hodge-Tate weights $[0,0,1,1]$. Furthermore, if $\pi$ is $\lambda$-ordinary, then 
    \[\rho_{\pi,\lambda}|_{G_{F_v}}=\left(\begin{matrix}
        \lambda_{\alpha_v}&*&*&*\\
        0&\lambda_{\beta_v}&*&*\\
        0&0&\epsilon^{-1}\lambda_{\beta_v}^{-1}&*\\
        0&0&0&\epsilon^{-1}\lambda_{\alpha_v}^{-1}
    \end{matrix}.
    \right)\]
    \item[(4)] If $\rho_{\pi,\lambda}$ is irreducible, then for each finite prime $v$ of $F$, $\rho_{\pi,\lambda}|_{G_{F_v}}$ is pure.
\end{enumerate}
\end{thm}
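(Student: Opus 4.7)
The plan is to construct $\rho_{\pi,\lambda}$ inside the \'etale cohomology of a Siegel modular threefold (or a quaternionic Shimura variety of dimension $3$ in the totally real case), and then verify (1)--(4) by combining the local Langlands correspondence for $\GSp_4$ with $p$-adic Hodge theory and weight--monodromy arguments. Arthur's endoscopic classification for $\GSp_4$ furnishes the starting dichotomy: either $\pi$ admits a cuspidal transfer to an automorphic representation $\Pi$ of $\GL_4(\mathbb{A}_F)$ (the stable/generic case), or $\pi$ is endoscopic and arises from smaller groups such as $\GL_2 \times \GL_2$ or a Saito--Kurokawa lift from $\GL_2$. In the stable case I would invoke the construction of Galois representations attached to regular algebraic cuspidal automorphic representations of $\GL_4$ over a totally real field (Shin, Chenevier--Harris, Harris--Lan--Taylor--Thorne, Caraiani, etc.) to obtain $r_\Pi : G_F \to \GL_4(\bar{K}_\lambda)$, and then show that $r_\Pi$ takes values in $\GSp_4$ using the symplectic self-duality of $\Pi$; in the endoscopic cases one assembles $\rho_{\pi,\lambda}$ by hand as a sum of Galois representations attached to the constituents.

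Property (1) then follows by matching central characters: the central character $|\cdot|^2$ of $\pi$ together with the unitary normalization $|\nu|^{-3/2}$ in (2) forces $\nu\circ\rho_{\pi,\lambda}=\epsilon^{-1}$. For (2), unramified local-global compatibility reduces to the Satake--Hecke eigenvalue comparison via the Chebotarev argument, and at ramified finite places one combines local--global compatibility for $r_\Pi$ at the $\GL_4$ level with Gan--Takeda's local Langlands for $\GSp_4$ and the compatibility between global and local transfer. For (3), parallel weight $2$ and the Hodge decomposition of $H^3$ of the Siegel threefold yield Hodge--Tate weights $[0,0,1,1]$ via the de Rham comparison isomorphism; in the $\lambda$-ordinary case the block upper-triangular shape is extracted from Hida theory for ordinary Siegel families following Tilouine--Urban and Mokrane--Tilouine, with the diagonal characters $\lambda_{\alpha_v},\lambda_{\beta_v}$ read off from the Hecke eigenvalues of the ordinary $U_p$-operators. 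For (4), once $\rho_{\pi,\lambda}$ is assumed irreducible it can be cut out as a direct summand of $H^3$ of a smooth proper Siegel threefold model, and purity at every finite place follows from the weight--monodromy conjecture in this PEL setting, due to Taylor--Yoshida in good-reduction situations and extended by Caraiani and Scholze at primes of bad reduction.

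The hardest step will be local--global compatibility at ramified primes in (2), especially at places where $\pi_v$ is non-generic: the identity $\mathrm{WD}(\rho_{\pi,\lambda}|_{G_{F_v}})^{F\text{-}ss}\cong \mathrm{rec}_\lambda(\pi_v\otimes |\nu|^{-3/2})^{ss}$ must be verified including the monodromy operator, and matching endoscopic or Saito--Kurokawa local types on the automorphic side with the corresponding local Galois types requires delicate input from Arthur's multiplicity formula together with the full Gan--Takeda correspondence. Purity in (4) is a close second difficulty, and is precisely the reason an irreducibility hypothesis is inserted there rather than derived: removing it would require a careful analysis of the weight filtration on the intersection cohomology of bad-reduction fibers of the Shimura variety, which lies well beyond what is needed for the applications in this paper.
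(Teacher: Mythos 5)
The paper does not actually prove this theorem: as the label ``(\cite{BCGP}, Theorem 2.7.1--2)'' indicates, it is quoted verbatim from Boxer--Calegari--Gee--Pilloni and used as a black box, with no proof given in the present text. So there is no internal argument to compare your sketch against. That said, a few comments are in order, since your sketch would not reconstruct the BCGP proof correctly either.

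The central difficulty your outline misses is that parallel weight $2$ is \emph{irregular}. The Hodge--Tate weights $[0,0,1,1]$ have repeated entries, which means $\pi_\infty$ is a (holomorphic) limit of discrete series rather than a discrete series, and after transfer to $\GL_4$ the resulting $\Pi$ is \emph{not} regular algebraic. Consequently one cannot realize $\rho_{\pi,\lambda}$ directly in the $H^3_{\mathrm{et}}$ of a Siegel modular threefold, nor can one simply cite the constructions of Shin, Chenevier--Harris, Harris--Lan--Taylor--Thorne, Caraiani, or Scholze, all of which require regularity. The construction in this weight must instead proceed by $p$-adic interpolation: one congruences $\pi$ against cohomological (regular weight) forms, builds pseudo-representations along an eigenvariety/Hida family, and specializes. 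For $\GSp_4$ this is precisely the content of Mok's theorem (building on Taylor's $p$-adic technique for $\GL_2$ weight $1$), which is what BCGP actually invokes. You gesture at Hida theory only for the block-triangular shape in part (3), but it is needed already for the existence of the representation.

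A related gap concerns the symplectic structure: showing the representation lands in $\GSp_4(\bar K_\lambda)$ rather than merely $\GL_4(\bar K_\lambda)$ is not automatic from ``symplectic self-duality of $\Pi$''; in the pseudo-representation construction one must keep track of the similitude factor through the interpolation, typically via the polarization/Bella\"iche--Chenevier machinery, and in the endoscopic (Yoshida, Saito--Kurokawa) cases the image must be checked case by case. Finally, your discussion of purity in (4) is broadly in the right direction, but note that in the irregular setting the representation is not a direct summand of the cohomology of a smooth proper Shimura variety, so the reduction to weight--monodromy for PEL varieties is again mediated by congruences. None of these issues are touched in the paper itself, which treats the whole theorem as an external input.
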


\begin{defn}
Let $F$ be a totally real number field. We say that a representation $\rho:G_F\to\GSp_4(\bar{K}_\lambda)$ is modular if there is a cuspidal automorphic representation $\pi$ of $\GSp_4(\mathbb{A}_F)$ of parallel weight 2 and central character $|\cdot|^2$ that satisfies $\rho\cong\rho_{\pi,\lambda}$. We say that $\rho$ is potentially modular if there is a finite Galois extension $F'/F$ of totally real fields such that $\rho|_{G_{F'}}$ is modular. 

For a compatible system of $\GSp_4$ representations $\mathcal{R}$, we say that $\mathcal{R}$ is modular (potentially modular) if $\rho_\lambda$ is modular for some (equivalently, for any) $\lambda$.

If $A/F$ is an abelian variety of $\GSp_4$ type, then we say $A$ is modular (potentially modular) if $\mathcal{R}_A$ is modular, in other words, $\rho_{A,\lambda}$ is modular (potentially modular) for some (equivalently, for any) $\lambda$.
\end{defn}

\begin{thm} (\cite{BCGP}, Theorem 8.4.1)\label{modular} Suppose that $p\ge 3$ splits completely in $K$ and the totally real field $F/\bbQ$. Fix $\p\mid p$ a prime in $K$. Suppose that $\rho:G_F\to \GSp_4(\bar{K}_\p)$ satisfies:\begin{enumerate}
    \item [(1)] The similitude character of $\rho$ is $\epsilon^{-1}$;
    \item [(2)] The representation $\bar{\rho}$ is vast and tidy;
    \item [(3)] For all $v\mid p$, $\bar{\rho}|_{G_{F_v}}$ is conjugate to a representation of form
    \[\left(\begin{matrix}\bar{\lambda}_{\alpha_v}&0&*&*\\
    0&\bar{\lambda}_{\beta_v}&*&*\\
    0&0&\bar{\epsilon}^{-1}\bar{\lambda}_{\beta_v}^{-1}&0\\
    0&0&0&\bar{\epsilon}^{-1}\bar\lambda_{\alpha_v}^{-1}
    \end{matrix}\right)\]
    where $\alpha_v\neq\beta_v$;
    \item [(4)] There exists $\pi$ of parallel weight 2 and central character $|\cdot|^2$, which is ordinary at all $v\mid p$, such that $\bar{\rho}_{\pi,\p}\cong \bar{\rho}$.
    \item [(5)] For all finite places $\p$ of $F$, $\rho|_{G_{F_v}}$ and $\rho_{\pi,\p}|_{G_{F_v}}$ are pure. 
\end{enumerate}

Then $\rho$ is modular. More precisely, there exists an ordinary automorphic representation $\pi'$ of $\GSp_4(\mathbb{A}_F)$ of parallel weight 2 and central character $|\cdot|^2$ that satisfies $\rho_{\pi',\p}\cong \rho$. 
\end{thm}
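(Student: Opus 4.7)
The statement is a classical modularity lifting theorem for $\GSp_4$, so the plan is to follow the Taylor--Wiles--Kisin patching paradigm, suitably adapted to the symplectic setting. First I would set up the deformation-theoretic side. Let $R$ denote the universal deformation ring of $\bar{\rho}$ classifying lifts $\rho' : G_F \to \GSp_4(\bar{K}_\p)$ with fixed similitude $\epsilon^{-1}$, minimal at primes not dividing $p$ (matching the ramification of $\rho$ and $\rho_{\pi,\p}$), and satisfying an ordinary deformation condition at each $v \mid p$ that records the two unit eigenvalues $\bar{\lambda}_{\alpha_v}, \bar{\lambda}_{\beta_v}$ and their distinctness. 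Hypothesis (3) ensures this local deformation problem is well-posed and formally smooth of the expected dimension; purity (5) pins down the Weil--Deligne parameters at bad primes so that the resulting local lifting rings are domains with the correct generic fiber.

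On the automorphic side, I would introduce the $\p$-ordinary Hida-type Hecke algebra $T$ acting on the space of $p$-ordinary cuspidal Siegel modular forms of parallel weight $2$ and central character $|\cdot|^2$, at a tame level matching the ramification of $\rho$. The residual modularity hypothesis (4) provides a maximal ideal of $T$ whose Galois representation is $\bar{\rho}$, and local-global compatibility for ordinary forms together with the universal property of $R$ yields a surjection $R \twoheadrightarrow T$. The goal is to promote this to an isomorphism (or at least to show $R$ is finite over the relevant Iwasawa algebra and acts faithfully on the patched module), from which modularity of $\rho$ follows by evaluating at the point of $\mathrm{Spec}\, R$ corresponding to $\rho$.

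The core of the argument is the Taylor--Wiles patching system. One produces, for each integer $n$, a set $Q_n$ of auxiliary primes $v \nmid p$ at which $\bar{\rho}(\Frob_v)$ has four distinct eigenvalues and appropriate eigenvalue ratios. Here is where conditions (2) enter crucially: vastness of $\bar{\rho}$ guarantees the existence of infinitely many such Taylor--Wiles primes even after enlarging $F$ by cyclotomic extensions, and tidiness ensures that the corresponding deformation rings behave well under base change and that the patched module is free of the expected rank over the diamond operators. Patching the $R_{Q_n}$ and the $Q_n$-augmented Hecke modules gives $R_\infty \twoheadrightarrow T_\infty$ acting on a module $M_\infty$ which, by a dimension count using Euler characteristic formulas for $\GSp_4$ together with freeness of $M_\infty$ over the diamond-operator ring, forces $R_\infty \cong T_\infty$, hence $R \cong T$.

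The main obstacle, and the reason this requires the heavy machinery of \cite{BCGP}, is the numerical/structural input for patching in the $\GSp_4$ setting: unlike $\GL_2$, the ordinary local deformation rings at $v \mid p$ are higher-dimensional and one must keep careful track of the two ordinary filtrations and the condition $\alpha_v \neq \beta_v$ to ensure the correct Bloch--Kato-type tangent space bounds. Equally delicate is the verification that the patched module $M_\infty$ is free (not merely faithful) over the full diamond-operator ring; this is precisely what vastness and tidiness (Lemma~\ref{vt}) are engineered to give. Once $R = T$ is established, specializing the universal modular deformation at the prime of $R$ cut out by $\rho$ produces a $p$-ordinary cuspidal $\pi'$ of parallel weight $2$ and central character $|\cdot|^2$ with $\rho_{\pi',\p} \cong \rho$, completing the proof.
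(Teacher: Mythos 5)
This theorem is not proved in the paper; it is imported verbatim from \cite{BCGP} (their Theorem~8.4.1), so the paper's ``own proof'' is simply a citation. There is therefore nothing internal to compare against, but since you have attempted to sketch the argument of \cite{BCGP}, some comments are in order.

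Your outline captures the broad Taylor--Wiles--Kisin framework that \cite{BCGP} indeed uses, and you are right that vastness and tidiness are exactly what control the existence of Taylor--Wiles primes and the behavior of the patched module. However, the sketch glosses over the central difficulty that makes this theorem hard and that \cite{BCGP} spend much of the paper overcoming: parallel weight $2$ cuspidal Siegel forms have \emph{irregular} Hodge--Tate weights $[0,0,1,1]$, so they are not cohomological in the usual Betti sense and the relevant coherent cohomology of Siegel modular varieties is spread over two degrees (the defect $\ell_0 = 1$). The ``dimension count forcing $R_\infty \cong T_\infty$'' that you invoke is the standard $\ell_0 = 0$ argument and simply does not close in this setting; \cite{BCGP} must instead run the Calegari--Geraghty method for higher-degree coherent cohomology, together with their ``doubling'' technique and a delicate analysis of $p$-ordinary local deformation rings for $\GSp_4$. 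Relatedly, your claim that ``the resulting local lifting rings are domains with the correct generic fiber'' is not accurate: the $p$-ordinary local deformation ring studied in \cite{BCGP} has several irreducible components (keeping track of the ordering of the unit eigenvalues and other data), and controlling which components the global points land on is one of the places where the hypothesis $\alpha_v \neq \beta_v$ and the ``$p$-distinguished'' condition earn their keep. So while the skeleton is right, as written the proposal would not go through, because the numerical patching step you describe fails when $\ell_0 > 0$.
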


For twisted Galois representations, we also have the following theorem.
\begin{thm} (\cite{BCGP}, Theorem 8.5.2)\label{modulartwist} Suppose that $p\ge 3$ splits completely in $K$ and the totally real field $F/\bbQ$. Fix $\p\mid p$ a prime in $K$. Suppose that $\rho:G_F\to \GSp_4(\bar{K}_\p)$ satisfies:\begin{enumerate}
    \item [(1)] The similitude character of $\rho$ is $\epsilon^{-1}\chi$, where $\chi$ is a totally even finite order character, and is unramified at all places above $p$;
    \item [(2)] The representation $\bar{\rho}$ is vast and tidy;
    \item [(3)] For all $v\mid p$, $\bar{\rho}|_{G_{F_v}}$ is conjugate to a representation of form
    \[\left(\begin{matrix}\bar{\lambda}_{\alpha_v}&0&*&*\\
    0&\bar{\lambda}_{\beta_v}&*&*\\
    0&0&\bar{\chi}\bar{\epsilon}^{-1}\bar{\lambda}_{\beta_v}^{-1}&0\\
    0&0&0&\bar{\chi}\bar{\epsilon}^{-1}\bar\lambda_{\alpha_v}^{-1}
    \end{matrix}\right)\]
    where $\alpha_v\neq\beta_v$;
    \item [(4)] There exists $\pi$ of parallel weight 2 and central character $|\cdot|^2$, which is a twist of an ordinary character at all $v\mid p$, such that $\bar{\rho}_{\pi,\p}\cong \bar{\rho}$.
    \item [(5)] For all finite places $v$ of $F$, $\rho|_{G_{F_v}}$ and $\rho_{\pi,\p}|_{G_{F_v}}$ are pure. 
\end{enumerate}

Then $\rho$ is modular. More precisely, there exists a twisted ordinary automorphic representation $\pi'$ of $\GSp_4(\mathbb{A}_F)$ of parallel weight 2 and central character $|\cdot|^2$ that satisfies $\rho_{\pi',\p}\cong \rho$. 
\end{thm}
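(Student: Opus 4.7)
The plan is to reduce Theorem \ref{modulartwist} to its untwisted counterpart Theorem \ref{modular} by killing the twisting character $\chi$ on both the Galois and automorphic sides, after a suitable solvable totally real base change. Note that the isomorphism $\bar{\rho}\cong\bar{\rho}_{\pi,\p}$ forces the similitudes to agree mod $\p$, so $\bar{\chi}$ is trivial and we may as well assume the order of $\chi$ is prime to $p$.

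First I would construct the untwisting character. Since $\chi$ is totally even, of finite order prime to $p$, and unramified above $p$, I would produce a finite solvable totally real extension $F'/F$ in which $p$ splits completely, which is linearly disjoint from the splitting field of $\bar\rho$ and from the kernel of $\chi$, and a finite-order Galois character $\psi:G_{F'}\to\bar{K}_\p^\times$, unramified at all places above $p$, such that $\psi^2=\chi|_{G_{F'}}$. Existence of such a square root over a solvable totally real extension is a standard class field theory construction: adjoining roots cuts out cyclic totally real layers that we can stack, perhaps enlarging the coefficient field to contain appropriate roots of unity.

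Next I would untwist. Set $\rho':=\rho|_{G_{F'}}\otimes\psi^{-1}$, whose similitude is $\epsilon^{-1}\chi\cdot\psi^{-2}=\epsilon^{-1}$. I then need to verify each hypothesis of Theorem \ref{modular} for $\rho'$. The residual representation $\bar{\rho}'=\bar{\rho}|_{G_{F'}}\otimes\bar{\psi}^{-1}$ remains vast and tidy because these are conditions on the image and center which are preserved by twists by a character of prime-to-$p$ order, and the disjointness of $F'/F$ from the splitting field of $\bar\rho$ preserves the large image after restriction. The ordinary local form at $v\mid p$ in condition (3) of Theorem \ref{modulartwist} becomes precisely the form required by condition (3) of Theorem \ref{modular} once the diagonal characters $\bar{\lambda}_{\alpha_v}, \bar{\lambda}_{\beta_v}$ are replaced by $\bar{\lambda}_{\alpha_v}\bar{\psi}^{-1}, \bar{\lambda}_{\beta_v}\bar{\psi}^{-1}$. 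On the automorphic side, solvable base change for $\GSp_4$ produces $\pi_{F'}$ over $F'$, and twisting by the Hecke character corresponding to $\psi^{-1}$ gives a cuspidal automorphic representation $\pi'=\pi_{F'}\otimes\psi^{-1}$ of parallel weight $2$ and central character $|\cdot|^2$ which is genuinely ordinary at all $v\mid p$ and satisfies $\bar{\rho}_{\pi',\p}\cong\bar{\rho}'$. Purity at finite places is preserved by twists. Applying Theorem \ref{modular}, I conclude that $\rho'$ is modular, and retwisting by $\psi$ recovers modularity of $\rho|_{G_{F'}}$, coming from a twisted ordinary automorphic representation of $\GSp_4(\mathbb{A}_{F'})$ of parallel weight $2$ and central character $|\cdot|^2$.

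Finally I would descend from $F'$ to $F$. Since $F'/F$ is solvable, I would apply cyclic base change for $\GSp_4$ step by step down the tower to obtain a cuspidal automorphic representation $\pi''$ over $F$ whose base change to $F'$ is the representation attached to $\rho|_{G_{F'}}\otimes\psi$, and verify that the associated Galois representation is exactly $\rho$. I expect the main obstacle to be precisely this descent step: establishing cyclic base change in the appropriate $\chi$-twisted, ordinary, parallel-weight-$2$ setting, and checking that the descended $\pi''$ retains central character $|\cdot|^2$ and the twisted-ordinary local structure rather than acquiring an extraneous character. A secondary technical difficulty is arranging the square root $\psi$ to simultaneously be unramified above $p$, totally even, and defined over a solvable totally real extension disjoint from all auxiliary fields; this is a careful but not essential complication, and can be handled by choosing $F'/F$ judiciously before constructing $\psi$.
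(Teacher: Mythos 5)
The paper does not actually prove this statement: it is quoted verbatim from \cite{BCGP}, Theorem 8.5.2, so there is no argument in the text to compare against. Your proposal is therefore a blind reconstruction of the BCGP proof, and it has two issues worth flagging.

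The opening deduction is backwards. If the residual similitudes of $\bar\rho$ and $\bar\rho_{\pi,\p}$ are forced to agree and this makes $\bar\chi$ trivial, then $\chi$ takes values in $1+\mathfrak{m}_{\bar K_\p}$, i.e.\ has $p$-power order, not order prime to $p$. (Moreover the forcing itself is dubious: since $\pi$ is only assumed to be a \emph{twist} of an ordinary representation, $\rho_{\pi,\p}$ need not have similitude exactly $\epsilon^{-1}$, so the residual comparison only pins down the residual twist of $\pi$, not $\bar\chi$.) This is not fatal — you can always pass to a large enough coefficient field — but it signals some confusion about what condition (4) actually encodes.

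The more serious gap is the final descent. Your strategy is to base-change to a solvable totally real $F'/F$, untwist, apply Theorem~\ref{modular}, retwist, and then descend the resulting $\GSp_4(\mathbb{A}_{F'})$-automorphic representation back to $\GSp_4(\mathbb{A}_F)$ by cyclic base change "step by step down the tower." You correctly flag this as the main obstacle, but I think you underestimate it: cyclic descent for $\GSp_4$ is not an off-the-shelf tool the way it is for $\GL_n$ via Arthur--Clozel, and one has to control $L$-packets, multiplicities, and the compatibility of the associated Galois representation under descent — precisely the kind of endoscopic bookkeeping that \cite{BCGP} goes to considerable lengths to avoid. In fact the BCGP proof of 8.5.2 does \emph{not} detour through $F'$ and come back: it works directly over $F$, noting that the "twisted ordinary" local condition at $v\mid p$ cuts out the same kind of well-behaved component in the local deformation ring as the ordinary one (after an unramified/finite-order untwist of the fixed lifting datum), so the Taylor--Wiles--Kisin patching argument of 8.4.1 runs unchanged with these modified local conditions. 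That route delivers modularity over $F$ without invoking any base-change descent for $\GSp_4$. Your untwisting idea is the right first instinct — the author even references exactly this untwisting maneuver elsewhere when discussing \cite{BCGP} Theorem 8.5.2 — but as a complete proof strategy for the stated conclusion over $F$, the descent step would need a genuine supporting argument that the proposal does not supply.
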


In \S 6, we showed that we can find an abelian variety $\mathcal{A}$ such that $\bar{\rho}_{\mathcal{A},\q}$ is induced from $\GL_2$ representations. The next theorem shows that such $\GL_2$ representations are potentially modular.

\begin{thm} (\cite{BCGP} Theorem 9.1.11)\label{modularinduced} Let $F_1/F$ be a finite extension of totally real fields and let $p,q>2$ be distinct primes that splits completely in $K$ and $F_1$. Fix a prime $\q\mid q$ in $K$. Let $\bar{r}:G_{F_1}\to \GL_2(\bar{K}_{(\q)})\cong\GL_2(\bar{\bbF}_q)$ be a representation with determinant $\bar{\epsilon}^{-1}$. 

Suppose that for each place $w\mid q$ of $F_1$, $\bar{r}|_{F_{1,w}}$ is of form $\Bigg(\begin{matrix}\bar{\lambda}_{\alpha_w}&0\\0&\bar{\epsilon}^{-1}\bar{\lambda}_{\alpha_w}^{-1}
\end{matrix}\Bigg)$, and suppose that $\bar{r}$ is unramified at all places above $p$. 

Then $r$ is potentially modular as $\GL_2$ representation. More precisely, let $F^{0}/F$ be a finite extension, then there is a finite Galois extension $F'/F$ of totally real fields in which $p$ and $q$ split completely and which is linearly disjoint from $F_1F
^0/F$, and a $\q$-ordinary cuspidal automorphic representation $\Pi$ of $\GL_2(\mathbb{A}_{F_1F'})$ of weight 0 and trivial central character which is unramified at all places dividing $pq$ and satisfies $\bar{\rho}_{\Pi,\q}\cong\bar{r}|_{G_{F_1F'}}$. 

\end{thm}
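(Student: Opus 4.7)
The plan is to run Taylor's $p$-$q$ switching trick for potential automorphy: produce an auxiliary elliptic curve $E$ over a suitable totally real extension $F_1 F'$ such that $\bar\rho_{E,\q} \cong \bar{r}|_{G_{F_1 F'}}$ and $\bar\rho_{E,p}$ is residually dihedral (hence residually automorphic via CM), then conclude modularity of $E$ at $p$ by an ordinary $\GL_2$ modularity lifting theorem, and transfer modularity back to $\q$ through the Tate module compatible system of $E$.

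First I would set up a moduli problem $Y/F_1$ parameterizing triples $(E,\iota_p,\iota_q)$ consisting of an elliptic curve $E$, a symplectic isomorphism $\iota_q:E[q]\xrightarrow{\sim}\bar{r}^\vee|_{G_{F_1}}$ (possible because $\det\bar{r}=\bar{\epsilon}^{-1}$ matches the Weil pairing), and a symplectic isomorphism $\iota_p:E[p]\xrightarrow{\sim}\mathrm{Ind}_{G_L}^{G_{F_1}}\bar\psi$ for an auxiliary imaginary CM extension $L/F_1$ and a finite-order character $\bar\psi$ of $G_L$, chosen so that the induced representation has cyclotomic determinant. Since $Y$ is a twist of the classical modular curve $Y(pq)/F_1$, it is smooth and geometrically connected. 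I then prescribe Moret-Bailly local conditions: at $w\mid q$ let $\Omega_w\subset Y(F_{1,w})$ be a neighborhood of the Serre--Tate canonical lift of an ordinary elliptic curve with the specified $E[q]$ shape (nonempty by the one-dimensional analogue of Lemma \ref{canonicallift}); at $v\mid p$ let $\Omega_v$ be a neighborhood of a curve realizing the prescribed induced $p$-torsion; and at archimedean places take any component of $Y(\bbR)$, which exists because $\det\bar{r}$ is totally odd.

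Applying the theorem of Moret-Bailly with $K=F_1$ and with $K^0$ chosen to contain $F_1 F^0 L$ together with the splitting fields of $\bar{r}$ and of the chosen induced representation, I obtain a totally real Galois extension $F'/F$, linearly disjoint from $F_1 F^0/F$, in which $p$ and $q$ split completely, and an elliptic curve $E/F_1 F'$ realizing all prescribed data. Then $\bar\rho_{E,p}$ is induced from a finite-order character of $G_{L F'}$, hence is residually automorphic as the mod-$p$ reduction of the automorphic induction of the corresponding Hecke character. Since $E$ is ordinary at every prime above $p$, an ordinary Skinner--Wiles type modularity lifting theorem over the totally real field $F_1 F'$ upgrades residual to full modularity of $\rho_{E,p}$, yielding a cuspidal $\Pi$ of $\GL_2(\mathbb{A}_{F_1 F'})$ of weight $0$ and trivial central character (the convention attached to an elliptic curve) with $\rho_{\Pi,p}\cong\rho_{E,p}$. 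By strict compatibility of the Tate module system of $E$, $\rho_{\Pi,\q}\cong\rho_{E,\q}$, whose mod-$\q$ reduction is $\bar{r}|_{G_{F_1 F'}}$; unramifiedness of $\Pi$ at places above $pq$ is automatic because $E$ has good reduction there by construction, and $\q$-ordinariness of $\Pi$ follows from ordinariness of $E$ above $q$.

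The main obstacle is the bookkeeping at $p$: choosing the pair $(L,\bar\psi)$ so that (i) $\mathrm{Ind}_{G_L}^{G_{F_1}}\bar\psi$ has cyclotomic determinant and is realizable as the $p$-torsion of an elliptic curve on a moduli space that remains geometrically connected, (ii) the residual image is big enough that the vastness / Taylor--Wiles hypotheses of the modularity lifting theorem apply to $\bar\rho_{E,p}$, and (iii) all the prescribed local conditions above $p$, above $q$, and at infinity can be satisfied simultaneously. Each of these is handled by standard Moret-Bailly and Khare--Wintenberger style arguments as in \cite{blggt} and \cite{BCGP}, but together they comprise essentially all the technical content beyond the switching idea itself.
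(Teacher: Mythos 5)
Since this result is imported from \cite{BCGP} (Theorem 9.1.11) rather than proved in the paper, there is no in-paper proof to compare against; but your sketch is a faithful reconstruction of the standard $p$--$q$ switching argument that underlies it, and is essentially the approach taken in \cite{BCGP}. The one bookkeeping point to be careful about: applying Moret-Bailly with $K=F_1$ and $E=F$ produces a totally real extension $L/F_1$ that is Galois over $F$ and linearly disjoint from the prescribed $K^0$; one then has to extract from $L$ a Galois $F'/F$, linearly disjoint from $F_1 F^0/F$, with $L = F_1 F'$ --- this is possible precisely because of the linear-disjointness clause in Moret-Bailly, but it is not quite the same as $L$ itself being the $F'$ of the statement. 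Likewise, you should be explicit that the character $\bar\psi$ and CM field $L$ are chosen so that $\mathrm{Ind}_{G_L}^{G_{F_1}}\bar\psi$ is odd with determinant $\bar\epsilon_p^{-1}$ \emph{and} has image large enough (irreducible after restriction to $G_{F_1(\zeta_p)}$, not bad dihedral) to feed into the ordinary lifting theorem; the hypothesis that $\bar{r}$ is unramified at $p$ is what makes it consistent to demand good ordinary reduction of $E$ at all places above $pq$, which is in turn what gives unramifiedness of $\Pi$ there. These are all points you flag, so the plan is sound.
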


\begin{thm}\label{potmod}
Let $\mathcal{R}$ be a geometric compatible system of $\GSp_4$ representation satisfying the large image property and falls in one of the two cases: 
\begin{itemize}
    \item $\mathcal{R}$ is defined over $G_\bbQ$ and is strongly positive ordinary; In other words, $\mathcal{R}$ comes from a non-CM strongly positive ordinary abelian variety $A/F$ of $\GSp_4$-type that has descent data to $\bbQ$ and satisfies the large image property;
    \item $\mathcal{R}$ has coefficients in $\bbQ$ for a set of primes of density one, which includes the case where $\mathcal{R}$ comes from an abelian surface $A$ with endomorphism ring $\bbZ$ or a fake abelian surface with endomorphism ring $\OO_D$ for a quaternion algebra $D/\bbQ$.
\end{itemize} Then $\mathcal{R}$ (hence $A$) is potentially modular. 
\end{thm}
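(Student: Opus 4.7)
The plan is to follow the strategy of \cite{BCGP} §9, using Theorem \ref{trick} as the main bridge that converts distinguished-ordinary data plus an induction trick into a tractable modularity problem. More precisely, I would first apply Theorem \ref{trick} (together with Remark \ref{rem} in the abelian surface case) to produce good primes $p,q$ totally split in $F$ and $K$, a totally real extension $F'/F$ in which $p,q$ split completely, and an abelian variety $\mathcal{A}/F'$ of the same $\GSp_4$-type as $A$, such that $\mathcal{A}[\p]\cong A[\p]|_{G_{F'}}$ and $\mathcal{A}[\q]$ is isomorphic to the induced representation $\bar{\rho}^\vee$ (or a sum of two copies in the fake case) constructed in Lemma \ref{Induced}. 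By construction of Lemma \ref{diagonallift} and Lemma \ref{Induced}, the $\q$-level structure on $\mathcal{A}$ satisfies the vast-and-tidy hypothesis (via Lemma \ref{vtinduced}) and the correct ordinary local shape, while $\mathcal{A}$ has good ordinary reduction at all primes above $p$ and $q$.

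Next I would prove that $\mathcal{A}$ is potentially modular by starting from the $\q$-adic picture. Since $\bar{\rho}_{\mathcal{A},\q}$ is induced from a $\GL_2(\bar{\bbF}_q)$-representation $\bar{r}:G_{F_2}\to\GL_2(\bbF_q)$ with $\det\bar{r}=\bar{\epsilon}^{-1}$, unramified above $p$ and locally diagonal above $q$, Theorem \ref{modularinduced} applies after a further totally real base change: there exists a totally real Galois $F''/F'$, linearly disjoint from the compositum of the kernels of $\bar{\rho}_{A,\p}$, $\bar{\rho}_{\mathcal{A},\q}$ and $F_2/F_1$, over which the underlying $\GL_2$ representation becomes modular by a $\q$-ordinary weight-$0$ Hilbert modular form $\Pi$. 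Inducing $\Pi$ back to $F_1F''$ then produces a $\q$-ordinary cuspidal automorphic representation of $\GSp_4$ whose mod-$\q$ Galois representation is $\bar{\rho}_{\mathcal{A},\q}|_{G_{F_1F''}}$. Applying the modularity lifting Theorem \ref{modular} (or Theorem \ref{modulartwist} in the descent-data case, where the similitude character is $\bar{\epsilon}^{-1}\bar{\chi}$ for a totally even finite order $\chi$ unramified above $p$), using purity for abelian varieties and the fact that $\mathcal{A}$ is ordinary and has distinguished unit eigenvalues at primes above $p$ and $q$, I conclude that $\rho_{\mathcal{A},\q}|_{G_{F_1F''}}$ itself is modular.

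Having shown $\mathcal{A}$ is potentially modular, I transfer modularity back to $A$ using the residual isomorphism $\mathcal{A}[\p]\cong A[\p]|_{G_{F'}}$. Since $\mathcal{A}$ is ordinary and $\p$-distinguished ordinary at all primes above $p$, the Galois representation $\rho_{\mathcal{A},\p}$ is modular (perhaps after a further totally real extension provided by the potential modularity of $\rho_{\mathcal{A},\q}$ together with solvable base change for $\GSp_4$), and so the mod-$\p$ representation $\bar{\rho}_{A,\p}|_{G_{F'}}$ arises from an ordinary, parallel-weight-$2$, central-character-$|\cdot|^2$ cuspidal automorphic representation. A second application of Theorem \ref{modular} or \ref{modulartwist} to $\rho_{A,\p}|_{G_{F'}}$, whose vast-and-tidy property follows from the large image hypothesis via Lemma \ref{vt}, upgrades this residual modularity to modularity of $\rho_{A,\p}$ itself. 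Since the compatible system is strictly compatible and purity holds, modularity of one member forces potential modularity of the whole system $\mathcal{R}_A$, and hence of $A$.

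The main obstacle I anticipate is the bookkeeping of simultaneous linear-disjointness constraints: to apply Theorem \ref{modularinduced} and then chain two modularity lifting theorems without spoiling the local hypotheses, the extensions $F'/F$, $F_1/F$, $F_2/F_1$, $F''/F'$ must all be kept linearly disjoint from the kernels of $\bar{\rho}_{A,\p}$, $\bar{\rho}_{A,\q}$ and each other, and $p$ and $q$ must remain totally split throughout. Most of these constraints have been arranged in Lemma \ref{Induced} and Theorem \ref{trick}, but ensuring that the field $F''$ produced by the potential modularity of the induced $\GL_2$ representation is still linearly disjoint from the $\p$-kernel — so that vast-and-tidy survives for the final lift — is the delicate point that requires careful invocation of the $F^0$ flexibility in Theorem \ref{modularinduced}.
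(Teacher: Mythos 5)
Your proposal is correct and follows essentially the same route as the paper: apply Theorem \ref{trick} to produce $\mathcal{A}/F'$ with the prescribed residual level structures, use Theorem \ref{modularinduced} together with automorphic induction from $\GL_2$ to $\GSp_4$ (the paper cites Roberts, Thm.~8.6) to get an ordinary $\pi$ with $\bar{\rho}_{\pi,\q}\cong\bar{\rho}_{\mathcal{A},\q}$, and then apply the modularity lifting Theorem \ref{modular} twice — first to $\rho_{\mathcal{A},\q}$ and then, after transporting along the compatible system to $\rho_{\mathcal{A},\p}$ and the isomorphism $\mathcal{A}[\p]\cong A[\p]$, to $\rho_{A,\p}|_{G_{F'}}$. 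The linear-disjointness bookkeeping you flag is indeed the delicate point and is exactly what the hypotheses of Theorems \ref{trick} and \ref{modularinduced} (the $F^0$ flexibility) are designed to control.
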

\begin{proof}
We first apply Lemma \ref{suff1} in the first case and Lemma \ref{suff} in the second case to get the good primes $p$ and $q$. Following the notations of Theorem \ref{trick}, we know that there exists an abelian variety $\mathcal{A}/F'$ over a totally real extension field $F'$ in which $p,q$ splits completely, such that $\mathcal{A}[\p]\cong A[\p]$, and $\mathcal{A}[\q]\cong \bar{\rho}^\vee$ or $(\bar{\rho}^\vee)^2$, where $\bar{\rho}=\mathrm{Ind}_{G_{F'}}^{G_F}\bar{r}$ is induced with $\bar{r}$ satisfying properties in Lemma \ref{Induced}. 

By Theorem \ref{modularinduced}, after replacing $F'$ with a further totally real extension, we can maintain all assumptions in Theorem \ref{trick} and there exists a
$\q$-ordinary automorphic representation $\Pi$ of $\GL_2(\mathbb{A}_{F_1F'})$ of weight 0 and trivial central character and is unramified at all places dividing $pq$ that satisfies $\bar{\rho}_{\Pi,\q}\cong\bar{r}|_{G_{F_1F'}}$. 

It follows from \cite[Thm. 8.6]{Rob} that there is an automorphic representation $\pi$ of $\GSp_4(\mathbb{A}_{F'})$ of parallel weight 2 and trivial central character whose transfer to $\GL_4(\mathbb{A}_{F'})$ is the automorphic induction of $\Pi\otimes|\cdot|$. So $\rho_{\pi,\q}\cong\mathrm{Ind}_{G_{F_1F'}}^{G_{F'}}\rho_{\Pi,\q}$, and $\bar{\rho}_{\pi,\q}\cong\bar{\rho}|_{G_{F'}}$. In addition, $\pi$ is ordinary, and $\rho_{\pi,\lambda}$ is pure at all finite places since $\rho_{\Pi,\lambda}$ is.

Now we first apply Theorem \ref{modular} to $\rho_{\mathcal{A},\q}$ and conclude that it is modular. Thus $\rho_{\mathcal{A},\p}$ is modular, and we may apply Theorem \ref{modular} again to deduce that $\rho_{A,\p}|_{G_{F'}}$ is modular, thus $A$ is potentially modular. 
\end{proof}

\newpage
\section{$K3$ Surfaces with Large Picard Rank}

\subsection{Compatible System of Representations Associated to $K3$ Surfaces}
Let $F$ be a totally real field and $X/F$ be a $K3$ surface with geometric Picard number $17$. We are interested in considering the transcendental motive $T$ which is complement of the image of the cycle classes. Let $\mathcal{S}:=(\bbQ,S,\{P_{v}(X)\},\{\rho_p\},H_\tau)$ denote the compatible system of Galois representations $V_p:=T_p(-1)$ which are the $p$-adic realizations of $T(-1)$ inside $H^2(X,\bbQ_p)$.  There is a perfect orthogonal pairing
$$H^2(X,\bbQ_p(1)) \times H^2(X,\bbQ_p(1)) \rightarrow \bbQ_p$$
which gives rise to such a pairing on $T_p$. It follows that $\det(\rho_p) = \epsilon^{5}_p \eta_p$ where $\epsilon_p$ is the $p$-adic cyclotomic character  and $\eta_p = \det(\rho_p(-1))$ is also self-dual, and thus $\eta_p = \eta^{\vee}_p = \eta^{-1}_p$ is at most quadratic. The determinant is also a compatible system and thus is independent of $p$, and hence we write it simply as $\eta$.

We have the following well-known properties for the compatible system $\mathcal{S}$: 
\begin{enumerate}
\item[(1)] If $v \notin S$ and $v \nmid p$ then $\rho_p$ is unramified at $v$
and $\rho_p(\Frob_v)$ has characteristic polynomial $P_v(X)\in\bbZ[x]$.
\item[(2)] For all $p$,  the representation $\rho_p$ is de Rham with Hodge--Tate weights $H = [0,1,1,1,2]$.
If in addition $v \notin S$, then $\rho_p$ is crystalline,
and the characteristic polynomial of crystalline Frobenius $\Frob_v$ is $P_v(X)$.
\item[(3)] The representation $\rho_p$ is pure of weight $2$.
\item[(4)] \label{dJ} If $v \in S$ and $v \nmid p$, then the semi-simplification of the representation
to $\rho_p |_{I_{F_v}}$ has uniformly bounded order. That is, there exists a fixed finite extension of $K/F_v$
 such that $\rho_p |_{G_{K}}$ is unipotent on inertia.
\end{enumerate}

Our main goal would be to construct from $\mathcal{S}$ a compatible family $\mathcal{R}$ of $4$-dimensional
representations to $\GSp_4(\bar{\bbQ}_p)$ that lifts the representations $\rho_p \otimes \eta$ via the degree $2$ isogeny $\GSp_4 \rightarrow \mathrm{GO}_5$. 

\begin{thm}\label{prep} Suppose that there exists a prime $p$ for which $T_p$  has monodromy group with
Lie algebra $\mathfrak{so}_5$.
Then:
\begin{enumerate}
\item [(1)] \label{comp} There exists a simple abelian variety $A/F$ with $\End^0(A)=D$ having center $M$ and
a  weakly compatible system
$$\mathcal{R} = (M,S,\{Q_{v}(X)\},\{\tilde{\rho}_{\lambda}\},[0,0,1,1])$$ of Galois
representations such that, for all $l$ that totally splits in $M$ and all $\lambda\mid l$ in $M$,  the representation $\tilde{\rho}_{\lambda}$ is  irreducible, occurs inside $H^1(A,\bar{\bbQ}_l)$, and 
the composition
$$\tilde{\rho}_{\lambda}: G_{\bbQ} \rightarrow \GSp_4(\bar{\bbQ}_l) \rightarrow \mathrm{GO}_5(\bar{\bbQ}_l)$$
is isomorphic to $\rho_l \otimes \eta$.
\item [(2)] \label{fake} There exists a finite Galois extension $E/F$ with $\Gal(E/F) = (\bbZ/2\bbZ)^m$ such that the restriction of this compatible system comes
from either an abelian surface $A/E$ or a fake abelian surface $A/E$ with descent data to $F$. 
\end{enumerate}
\end{thm}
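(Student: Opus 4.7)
The plan is to combine the Kuga-Satake construction with the motivic lifting theorems of \cite{Patrikis}, and then exploit the internal structure of the even Clifford algebra to produce the descent data. The central group-theoretic input is the exceptional isogeny $\mathrm{Spin}(5) \cong \Sp_4$, which yields the double cover $\GSp_4 \to \mathrm{GSO}_5$ along which we wish to lift. Since $\eta$ is at most quadratic, $\det(\rho_l\otimes\eta) = \eta^{6}\epsilon^{5} = \epsilon^{5}$, so $\rho_l\otimes\eta$ does land in $\mathrm{GSO}_5(\bar{\bbQ}_l)$. The obstruction to a lift lives in an $H^2$ with $\mu_2$-coefficients and must be killed coherently in $\lambda$. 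First I would invoke Patrikis's motivic lifting machinery: since $\rho_l\otimes\eta$ is motivic, coming from the K3 surface $X/F$, his results produce a coherent compatible family of motivic lifts $\tilde{\rho}_\lambda : G_F \to \GSp_4(\bar{\bbQ}_l)$ with Hodge-Tate weights $[0,0,1,1]$. Geometrically, this lift is realized by the Kuga-Satake abelian variety $KS(X)/F$, whose $H^1$ contains the target 4-dimensional spin-type subrepresentation.

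Next I would show absolute irreducibility of each $\tilde{\rho}_\lambda$. The hypothesis that the monodromy of $\rho_p$ has Lie algebra $\mathfrak{so}_5$ implies, via the spin double cover, that the image of $\tilde{\rho}_p$ has Lie algebra $\mathfrak{sp}_4$, acting absolutely irreducibly on its standard 4-dimensional representation; constancy of monodromy type across a motivic compatible family transfers irreducibility to every $\lambda$. Equipped with irreducibility and the fact that $\tilde{\rho}_\lambda$ occurs inside $H^1(KS(X),\bar{\bbQ}_l)$, Lemma \ref{extend} (whose Tate-conjecture hypothesis is Faltings's theorem for abelian varieties) extracts a simple factor $A/F$ of $KS(X)$ with $\End^0(A) = D$ of center $M$, and assembles the weakly compatible system $\mathcal{R}$ with $M$-coefficients of the required form. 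This completes part (1).

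For part (2), the Kuga-Satake construction itself provides the decomposition. The even Clifford algebra $C^+(V)$ of the 5-dimensional orthogonal space $V = T(-1)$ has center $\bbQ$ (since $\dim V$ is odd) and dimension $16$; in our setting it is either $M_4(\bbQ)$ (split case) or $M_2(D_0)$ for a quaternion $\bbQ$-algebra $D_0$ (non-split case). The Kuga-Satake variety carries an action of an order in $C^+(V)$, and an idempotent decomposition cuts out a 2-dimensional abelian surface with trivial $\End^0$ (split case) or a 4-dimensional fake abelian surface with $\End^0 = D_0$ (non-split case). These idempotents are not $F$-rational in general, but they become rational over a finite Galois extension $E/F$ whose Galois group is elementary abelian of exponent $2$, reflecting the $\mu_2$-ambiguity of the spin covering. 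Over $E$, the relevant factor of $A|_E$ supplies the claimed (fake) abelian surface, and the permutation action of $\Gal(E/F)$ on its Galois conjugates yields the descent data to $F$ in the sense of \S 2.3.

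The main obstacle is ensuring that the lift $\tilde{\rho}_\lambda$ actually arises from an abelian variety defined over $F$ (rather than only over an extension) and that $\Gal(E/F)$ emerges precisely as an elementary abelian 2-group. Both hinge on the full strength of Patrikis's motivic lifting framework, which controls the spinor ambiguity in the Kuga-Satake correspondence uniformly, combined with the Tate conjectures for K3 surfaces \cite{Andre} and for abelian varieties (Faltings). Once these ingredients are in place, the rest of the argument is structural bookkeeping about the even Clifford algebra and its action on the Kuga-Satake variety.
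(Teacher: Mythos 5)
Your treatment of part (1) is essentially the paper's: invoke Patrikis's motivic lifting (\S 3.2 and Prop.~4.2.31 of \cite{Patrikis}) to produce $\tilde\rho:G_F\to\GSp_4(\bar{\bbQ}_p)$ lying in $H^1$ of an abelian variety, then apply Lemma \ref{extend} (with Faltings in the role of the Tate conjecture) to produce the compatible system $\mathcal{R}$ with coefficients in the center $M$ of $\End^0(A)$. Your observation that $\det(\rho_l\otimes\eta)=\epsilon^5$ and that the isogeny is onto $\mathrm{GSO}_5$ rather than $\mathrm{GO}_5$ is a useful clarification of the paper's loose language.

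Your route to part (2), however, is genuinely different from the paper's and as written has a gap. The paper does \emph{not} argue via the even Clifford algebra structure on the Kuga--Satake variety. Instead it takes the compatible system $\mathcal{R}$ produced in (1), which a priori has coefficients only in $M$, and lets $\Gal(M/\bbQ)$ act on coefficients: since $\wedge^2\mathcal{R}^\sigma$ recovers the $\mathrm{GO}_5$-system of $\rho_p\otimes\eta$ (which has $\bbQ$-coefficients) for every $\sigma$, irreducibility forces $\mathcal{R}^\sigma\cong\mathcal{R}\otimes\chi_\sigma$ with $\chi_\sigma$ quadratic. The extension $E/F$ is then the field killing all the $\chi_\sigma$ --- which is why $\Gal(E/F)\cong(\bbZ/2\bbZ)^m$ --- and over $E$ the system has $\bbQ$-coefficients, so that the proof of \cite[Lem.~10.3.2]{BCGP} identifies it with an abelian or fake abelian surface. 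Your Clifford-algebra argument skips exactly this coefficient-descent step. Concretely: (i) the idempotents of $C^+(V)$ are already $\bbQ$-rational (it is a central simple $\bbQ$-algebra of dimension 16), so ``idempotents becoming rational over $E$'' is not the obstruction, and the elementary-abelian-$2$ shape of $\Gal(E/F)$ is not explained by it; (ii) the assertion that the idempotent cuts out an abelian surface with \emph{trivial} $\End^0$, or a fake abelian surface with quaternion $\End^0$ over $\bbQ$, is not a formal consequence of the Clifford decomposition --- the simple factor of $KS(X)$ could a priori have real multiplication or other extra endomorphisms, which is precisely why the coefficient field $M$ may be larger than $\bbQ$ and why the paper needs the twist argument and \cite[Lem.~10.3.2]{BCGP} to conclude that $\End^0$ is $\bbQ$ or a $\bbQ$-quaternion algebra; and (iii) the descent data to $F$ is produced in the paper by the compatible-system twists, not by an unspecified ``permutation action on Galois conjugates.'' To make your approach rigorous you would still need something equivalent to the paper's quadratic-twist analysis of $\Gal(M/\bbQ)$ acting on $\mathcal{R}$, which is the content your outline is missing.
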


\begin{proof}
The irreducible representation $\rho_p \otimes \eta$ lifts to a representation $\tilde{\rho}: G_{F} \rightarrow \GSp_4(\bar{\bbQ}_p)$
with Hodge-Tate weights $[0,0,1,1]$ by \cite{Patrikis} \S 3.2. Furthermore, according to Proposition 4.2.31 of \cite{Patrikis}, $\tilde{\rho}$ lies in some abelian variety $A$ after some finite extension. Thus by Lemma \ref{extend}, it can be extended to some compatible system $\mathcal{R} = (M,S,\{Q_{v}(X)\},\{\tilde{\rho}_{\lambda}\},[0,0,1,1])$ that comes from the abelian variety $A$. 

Then we have $\Gal(M/\bbQ)$ acting on $\mathcal{R}$ via coefficients. 
If $\sigma \in \Gal(M/\bbQ)$, we see that $\mathcal{R}^{\sigma}$
is also a compatible system, and the corresponding compatible system of $\mathrm{GO}_5$ representations given by $\wedge^2 \mathcal{R}^{\sigma}$ is equal to the compatible system associated to $\rho_p \otimes \eta$.
We deduce that $\mathcal{R}^{\sigma}$ is isomorphic to $\mathcal{R}$ up to at most a quadratic twist by irreducibility.
Hence there exists
an extension $E/F$ (with Galois group $(\bbZ/2 \bbZ)^m$ for some $m$)
where all these twists become trivial. But then $\mathcal{R}$ restricted to $G_E$
is invariant under $\Gal(M/\bbQ)$, and hence forms a compatible system
with coefficients in $\bbQ$. But now the proof of \cite{BCGP} Lemma~10.3.2 implies that $\mathcal{R}$
over $E/F$ is associated to either an abelian surface or fake abelian surface $A$ over $E$,
and the fact that $\mathcal{R}$ can be extended to $G_{F}$ and gives the same Galois representations up to twist proves the descent data.

\end{proof}

We now show the irreducibility of one (hence for all) $T_p$ together with the large image property: 
\begin{lem}\label{noone}
    The representation $T_p$ can not have any one-dimensional factors.
\end{lem}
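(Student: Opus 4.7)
The plan is to argue by contradiction, producing a nontrivial $G_F$-invariant line in the transcendental cohomology and ruling it out via Andr\'e's theorem on the Tate conjecture for $K3$ surfaces.

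Suppose, for contradiction, that there is a one-dimensional Galois-stable subspace $\chi \subset T_p$. Since $T_p$ is unramified outside the finite set $S$, so is $\chi$. Since $\chi(\Frob_v)$ is a root of the monic integral polynomial $P_v(X)\in\bbZ[X]$, it is an algebraic integer, and property (3) (purity of weight $2$) forces every complex embedding of $\chi(\Frob_v)$ to have absolute value $q_v$. Thus $\chi$ is pure of weight $2$ with algebraic Frobenius traces.

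The key reduction I would carry out is to promote this to the statement that $\chi = \epsilon^{-1}\cdot \chi_0$ (or its analogue, depending on the twist convention for $T_p$) for some finite-order character $\chi_0$. This is a consequence of Serre's classification of locally algebraic abelian $\ell$-adic representations (equivalently the theory of algebraic Hecke characters): a one-dimensional Galois character unramified outside a finite set, pure of weight $2$, and taking algebraic values on Frobenius elements, must be a cyclotomic twist of a finite-order character. After replacing $F$ by the finite extension $F'/F$ over which $\chi_0$ trivializes, the inclusion $\chi \hookrightarrow T_p$ becomes a copy of the appropriate power of $\epsilon$ inside $T_p|_{G_{F'}}$, and after twisting by the inverse cyclotomic character this exhibits a nonzero $G_{F'}$-fixed vector inside $T_p(1) \subset H^2(X_{\bar F}, \bbQ_p(1))$.

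The contradiction now follows from Andr\'e's theorem \cite{Andre}: the Tate conjecture holds for $K3$ surfaces, so $H^2(X_{\bar F}, \bbQ_p(1))^{G_{F'}}$ is spanned by cycle classes and hence contained in $NS(X_{F'}) \otimes \bbQ_p$. But $T_p(1)$ is by construction orthogonal to $NS(X_{\bar F}) \otimes \bbQ_p$ inside $H^2(X_{\bar F}, \bbQ_p(1))$, so any $G_{F'}$-invariant line in $T_p(1)$ would have to lie in the zero intersection $T_p(1) \cap (NS(X_{F'})\otimes\bbQ_p)$, yielding the desired contradiction. The one step with real content is the middle reduction, where one must verify that a pure weight-$2$ abelian $\ell$-adic character with algebraic Frobenius traces is necessarily cyclotomic up to a finite-order twist; everything before and after that is formal, resting on the compatibility of $\mathcal{S}$ and on Andr\'e's theorem.
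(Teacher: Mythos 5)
Your proof is correct and takes essentially the same route as the paper: both arguments identify the one-dimensional factor as a Tate twist of a finite-order character and then invoke Andr\'e's theorem (the Tate conjecture for $K3$ surfaces over number fields) to produce a contradiction. Your version applies purity first and then appeals to Serre's classification to pin down the character; the paper instead runs through the possible Hodge--Tate weights $\{-1,0,1\}$ of the factor, using purity to eliminate the extreme weights and the Tate conjecture to eliminate the middle one. The underlying content is identical.

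One point deserves care. The assertion in your ``key reduction'' --- that a one-dimensional $\ell$-adic character unramified outside a finite set, pure of weight $2$, with algebraic Frobenius eigenvalues must be a cyclotomic twist of a finite-order character --- is \emph{not} a general fact about number fields. It already fails over an imaginary quadratic field: a CM Hecke character of infinity type $(2,0)$ is pure of weight $2$ with algebraic Frobenius eigenvalues but is not of the form $\epsilon^{-1}\chi_0$ with $\chi_0$ of finite order. What rescues the reduction here is that $F$ is totally real: the infinity type of an algebraic Hecke character of a totally real field is forced to be a power of the norm, so the associated $\ell$-adic character is $\epsilon^a\chi_0$ with $\chi_0$ of finite order, and purity then pins down $a$. (You also need the factor to be de Rham at $p$, which it is, being a subquotient of $T_p$, in order to invoke Serre/Fontaine and attach a Hecke character at all.) The paper uses this same totally-real input implicitly when it asserts that a weight $\pm1$ factor must be $\chi\epsilon^{\mp1}$ with $\chi$ finite order. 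Since this is exactly the step you flag as the ``one step with real content,'' you should state the dependence on $F$ being totally real explicitly.
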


\begin{proof} Any one dimensional factor must correspond to a representation of $G_{F}$ with Hodge--Tate weight $-1$, $0$, or $1$. If it has Hodge-Tate weight $0$, then the character has finite order. By the Tate conjecture for $K3$ surfaces over a number field it must come from a ($\bar{\Q}_p$-linear combination of) cycle classes, which contradicts the assumption that $T$ is the transcendental motive. If the Hodge--Tate weight is $-1$ or $1$, it must come from $\chi \epsilon^{-1}$ or $\chi \epsilon$ where $\chi$ is some finite order character and $\epsilon$ is the cyclotomic character. But this contradicts purity.
\end{proof}

\begin{lem} \label{foroneforall} If $T_p$ is irreducible for one $p$, then it is absolutely irreducible for all $p$
and  the Lie algebra of the monodromy group
contains $\mathfrak{so}_5$.
\end{lem}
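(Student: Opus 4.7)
The plan is to handle the two conclusions in sequence. First I would upgrade irreducibility at the given prime $p_0$ to absolute irreducibility using Lemma \ref{noone}, then propagate absolute irreducibility to all $p$ by compatibility of $\mathcal{S}$, and finally pin down the monodromy Lie algebra by the classification of irreducible $5$-dimensional orthogonal representations, ruling out all but $\mathfrak{so}_5$ using the given Hodge--Tate weights.

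For the upgrade at $p_0$, Schur's lemma gives $D := \End_{G_F}(\rho_{p_0})$ as a division algebra over $\bbQ_{p_0}$ whose $\bbQ_{p_0}$-dimension divides $\dim \rho_{p_0} = 5$ (since $\rho_{p_0}$ is free as a $D$-module). Since $5$ is prime, either $D = \bbQ_{p_0}$ (absolute irreducibility) or $D$ is a degree $5$ field extension, in which case $\rho_{p_0} \otimes_{\bbQ_{p_0}} \bar{\bbQ}_{p_0}$ decomposes as a direct sum of five $1$-dimensional characters of $G_F$---contradicting Lemma \ref{noone}. Hence $\rho_{p_0}$ is absolutely irreducible. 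For propagation, $\dim \End_{G_F}(\rho_p \otimes \bar{\bbQ}_p)$ is computed from $\{P_v(X)\}$ via Chebotarev and Brauer--Nesbitt, hence is constant in $p$ by compatibility of the system; it equals $1$ at $p_0$, so equals $1$ throughout, yielding absolute irreducibility for every $\rho_p$.

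For the monodromy statement, fix any $p$ and let $G_p^{\mathrm{alg}} \subseteq \mathrm{GO}(V_p)$ be the Zariski closure of the image of $\rho_p$ in the similitude group of the Poincaré form. Absolute irreducibility forces its identity component to act irreducibly on $V_p$, so the derived (semisimple) part $\mathfrak{g}^{\mathrm{ss}}$ of its Lie algebra admits an irreducible $5$-dimensional orthogonal representation. The standard minimal-weight classification (cf.\ \cite[Cor.~5.11]{P}) leaves only two possibilities over $\bar{\bbQ}_p$: $\mathfrak{g}^{\mathrm{ss}} \cong \mathfrak{so}_5$ acting via the standard representation, or $\mathfrak{g}^{\mathrm{ss}} \cong \mathfrak{sl}_2$ acting via $\mathrm{Sym}^4$ of the standard $2$-dimensional representation.

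The main obstacle is ruling out the $\mathrm{Sym}^4$ case, and the Hodge--Tate data does this cleanly. The Hodge-Tate cocharacter of $\rho_p$ factors through $G_p^{\mathrm{alg}}$; in the hypothetical $\mathfrak{sl}_2$ case its projection onto the $\mathrm{SL}_2$ factor has weight $c \in \bbZ$ on the standard $2$-dimensional representation, shifted by a central similitude weight $d$, so the weights on $V_p \cong \mathrm{Sym}^4$ are $\{4c+d,\, 2c+d,\, d,\, -2c+d,\, -4c+d\}$. For any $c \neq 0$ these five weights are pairwise distinct, so cannot match the Hodge-Tate multiset $[0,1,1,1,2]$, in which the value $1$ occurs three times; but $c = 0$ contradicts non-triviality of $\mathfrak{sl}_2$. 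Hence $\mathfrak{g}^{\mathrm{ss}} \supseteq \mathfrak{so}_5$ for every prime $p$, as required.
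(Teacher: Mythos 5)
Your proposal has the right overall shape, but the step ``Absolute irreducibility forces its identity component to act irreducibly on $V_p$'' is not true as stated, and it is precisely here that the hardest part of the lemma lives. Absolute irreducibility of $\rho_p$ over $G_F$ only forces the full algebraic monodromy group $G^{\mathrm{alg}}_p$ to act irreducibly. By Clifford theory, the restriction to the identity component $(G^{\mathrm{alg}}_p)^0$ is then a direct sum of $G^{\mathrm{alg}}_p$-conjugate irreducibles of a common dimension $d$ dividing $5$. The case $d=5$ is the one you analyze, but $d=1$ is not excluded a priori: it would mean $(G^{\mathrm{alg}}_p)^0$ is a torus, $\mathfrak{g}^{\mathrm{ss}}=0$, and $T_p$ is induced from a de Rham character $\chi$ of a degree-$5$ extension $K/F$. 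In that scenario the minimal-weight classification you invoke simply has nothing to say, so you never reach the $\mathfrak{so}_5$ vs.\ $\mathrm{Sym}^4$ dichotomy. The paper spends an explicit paragraph ruling this out: $\chi$ is an algebraic Hecke character by \cite[Lemma~4.3]{CG}; a degree-$5$ extension of a totally real field cannot be CM, so $\chi$ must be a finite-order character times a power of the cyclotomic character; the induced representation then has all Hodge--Tate weights equal, contradicting the multiset $[0,1,1,1,2]$. You need some version of this step before your classification argument can start.

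A secondary difference: your propagation from $p_0$ to all $p$ relies on $\ell$-independence of $\dim\End_{G_F}(\rho_p\otimes\bar{\bbQ}_p)$, which is true for semisimple compatible systems but requires a real argument (identifying the semisimple isomorphism type as an invariant of the characteristic-polynomial function via density of Frobenii); stating it as ``computed from $\{P_v\}$ via Chebotarev and Brauer--Nesbitt'' undersells the work. The paper instead pins down the monodromy at the single prime $p_0$, invokes Theorem \ref{prep} to manufacture the compatible system $\mathcal{R}$ of irreducible $\GSp_4$-lifts, and then uses constancy of the \emph{rank} of the algebraic monodromy group across $\mathcal{R}$ --- a cleaner and more standard $\ell$-independence fact --- to conclude the semisimple part is $\mathfrak{sp}_4$ or $\mathfrak{so}_4$ everywhere, and rules out $\mathfrak{so}_4$ by Lemma \ref{noone} since it would split off a line in $T_\ell$. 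Both routes should work, but the paper's also produces the $\GSp_4$-structure needed downstream, so it is doing double duty; if you prefer your direct route, you must both shore up the $\dim\End$ step and close the induced-character gap.
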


\begin{proof}
Suppose that $T_p$ is irreducible for one $p$. We first claim that $T_p$ must be absolutely irreducible. 
Note that $\dim T_p = 5$ is prime. Hence if it is reducible but not absolutely irreducible, then $T_p \otimes_{\Q_p} \bar{\bbQ}_p$
has to decompose into the product of $1$-dimensional representations which are conjugate by the action of $G_{\bbQ_p}$.
These one dimensional representations must be Hodge-Tate, and thus of the form $\chi\,\epsilon^m$ for some $m$
where $\chi$ is a finite order character and $\epsilon$ is the cyclotomic character. But $G_{\bbQ_p}$ fixes $\epsilon$,
which means that all the Hodge-Tate weights of $T_p$ are equal, a contradiction. 

Hence we may assume that $T_p$ is absolutely irreducible. Suppose that $T_p$ becomes reducible over a finite extension. Since $G_F$ acts transitively
on the factors they must all have the same dimension, which must therefore be equal $1$.  Moreover the dimension of each
isotypic component is independent of the character. The characters cannot all be the same since $T_p$
has distinct Hodge-Tate weights. Hence all the characters are distinct, and thus $T_p$ is induced from a character of a degree $5$ extension $K/F$ which is de Rham and hence an algebraic character by \cite[Lemma~4.3]{CG}. But such a $K$ cannot be CM and hence the only algebraic characters are finite order characters times characters of $G_{F}$, and the induction of such a character has all Hodge-Tate weights equal, which is again a contradiction. 
 
Thus the monodromy representation of $T_p$ is connected. The only proper Lie subalgebra of $\mathfrak{so}_5 \simeq \mathfrak{sp}_4$ with irreducible representations of dimension $5$ is ${\mathfrak{sl}}_2$ acting via the $4$th symmetric power representation. But if $T_p$
is the $4$th symmetric power of some representation over some finite extension of $G_{F}$, then the Hodge-Tate weights at any
embedding will necessarily be in arithmetic progression which they are not.

Hence $T_p$ must have monodromy group $\mathfrak{so}_5$.
But now by Theorem \ref{prep} there exists a compatible system $\mathcal{R}$ of absolutely irreducible $\GSp_4$
representations $\tilde{\rho}_{\lambda}$ and
it follows by constancy of ranks that the monodromy at each prime has semisimple part $\mathfrak{so}_5=\mathfrak{sp}_4$
or $\mathfrak{so}_4={\mathfrak{sl}}_2 \times {\mathfrak{sl}}_2$. In the latter case, the representation $T_l$ decomposes into a direct sum of a one-dimensional representation and a $4$ dimensional representation, which is not possible from the previous lemma. 
\end{proof}

It follows that if $T_p$ is not absolutely irreducible then it must decompose as $A_p \oplus B_p$  where $\dim(A_p)=2$ and $\dim(B_p) = 3$ are both
irreducible.

\begin{lem} \label{bounded} If $T_p$ is reducible, then $A_p$ is the Galois representation associated to a CM modular form of level only divisible by primes in $S$ and bounded independently of $p$.
\end{lem}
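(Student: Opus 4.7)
Proof proposal. The plan is to equip $A_p$ with an orthogonal structure inherited from $T_p$, conclude that $A_p$ is dihedral (induced from a character of a quadratic extension $F'/F$), identify the inducing character with an algebraic Hecke character of a CM quadratic extension, and then extract the level bound from property~(4) of the compatible system $\mathcal{S}$.

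First I would check that the restriction of the $\mathrm{GO}_5$-pairing on $T_p$ to $A_p\otimes A_p$ is non-degenerate. If it vanished, the induced $G_F$-equivariant map $A_p\to (T_p/A_p)^{\vee}\cong B_p^{\vee}$ (up to the similitude twist) would be an injection, contradicting $\dim A_p = 2 < 3 = \dim B_p$ and the irreducibility of $B_p$; a rank-one restriction would produce a $G_F$-stable line in the irreducible $A_p$. Hence $A_p\colon G_F\to \mathrm{GO}_2(\bar{\bbQ}_p)$. Since $\mathrm{GSO}_2$ is abelian and $A_p$ is irreducible, the image must surject onto $\mathrm{GO}_2/\mathrm{GSO}_2\cong\bbZ/2\bbZ$; the kernel cuts out a quadratic extension $F'/F$, on restriction to which $A_p$ decomposes as $\chi_p\oplus\chi_p^{\sigma}$ for the nontrivial $\sigma\in\Gal(F'/F)$. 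Therefore $A_p\cong\mathrm{Ind}_{G_{F'}}^{G_F}\chi_p$.

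Next I would argue that the decomposition is compatible across $p$ and that $F'/F$ is CM. The characteristic polynomial $P_v(X)\in\bbZ[X]$ of $\Frob_v$ on $T_p$ factors uniquely as $P_v(X) = Q^A_v(X)Q^B_v(X)$ with $\deg Q^A_v = 2\ne 3 = \deg Q^B_v$; since the two factors are distinguished by degree the factorization is Galois-stable over $\bbQ$ and independent of $p$. So $\{A_p\}_p$ is a weakly compatible family and the $\chi_p$'s form a compatible system of algebraic characters of $G_{F'}$, coming by Weil's theorem from a single algebraic Hecke character $\chi$ of $F'$. To conclude that $F'/F$ is CM I would use Hodge theory on the transcendental motive $T$ (with Hodge numbers $(1,3,1)$): a two-dimensional summand $A$ of $T$ cannot be of pure type $(1,1)$, since then by the Lefschetz $(1,1)$-theorem $A$ would consist of algebraic cycles, contradicting $A\subset T$. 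Hence $A$ has Hodge type $(1,0,1)$, and complex conjugation at each real place of $F$ acts on $A$ with eigenvalues $\{+1,-1\}$; a sign calculation against the infinity type of $\chi$ then forces $F'/F$ to be non-split at every real place, i.e.\ $F'$ is totally imaginary over $F$. The theta series attached to $\chi$ is then a CM Hilbert modular cuspidal eigenform over $F$ whose $p$-adic Galois representation is $A_p$.

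Finally, for the level bound, the Artin conductor of $A_p$ at any finite prime divides that of $T_p$ since $A_p$ is a direct summand. The system $\mathcal{S}$ is unramified outside $S\cup\{p\}$, and property~(4) says the inertia action on the semisimplification of $T_p$ at each $v\in S$ becomes trivial after a fixed finite base change, uniformly in $p$; this gives a uniform bound on the Artin conductor of $T_p$ at each $v\in S$ that transfers to $A_p$. Compatibility of $\{A_p\}_p$ then makes the conductor of $A_p$ independent of $p$, and the level of the CM form---equal to the conductor of $\chi$ times the relative discriminant of $F'/F$, both supported on primes of $S$ and uniformly bounded---yields the claim. The principal difficulty is the Hodge-theoretic step showing $F'/F$ is CM: translating the $\bar{\bbQ}_p$-Galois decomposition of $T_p$ into a $\bar{\bbQ}$-Hodge decomposition of $T$ requires the Mumford-Tate conjecture for K3 surfaces (known by Tankeev and Andr\'e), and the complex-conjugation sign analysis needs careful handling of the infinity type of the Hecke character.
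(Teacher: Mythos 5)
Your overall strategy is reasonable, and you correctly isolate the two main points (that $A_p$ lands in $\mathrm{GO}_2$ hence is dihedral, and that the conductor bound comes from property (4) of $\mathcal{S}$), but you take a genuinely different and longer route than the paper, and one of your key steps has a gap. The paper's proof is very short: since $A_p$ has image in $\mathrm{GO}_2$ it is potentially abelian, so the Fontaine--Mazur conjecture is known for it; the orthogonality of $T_p$ forces $A_p$ to have Hodge--Tate weights either $[0,0]$ or $[-1,1]$; in the first case Fontaine--Mazur gives finite image, which the Tate conjecture for K3 surfaces rules out exactly as in Lemma~\ref{noone}, and in the second case Fontaine--Mazur for potentially abelian representations with distinct Hodge--Tate weights directly produces the CM modular form. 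Your route instead constructs the inducing quadratic extension $F'/F$ and the Gr\"ossencharacter $\chi$ explicitly and then tries to see that $F'$ is CM via Hodge theory, using the Lefschetz $(1,1)$ theorem (plus the Mumford--Tate conjecture for K3 to transport the $\bar{\bbQ}_p$-decomposition to a Hodge decomposition) to rule out the Hodge type $(0,2,0)$ case, followed by an archimedean ``sign calculation.''

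The gap is in that sign calculation. Knowing that complex conjugation $c_v$ at a real place $v$ of $F$ acts on $A$ with eigenvalues $\{+1,-1\}$ does \emph{not} force $v$ to be non-split in $F'$: if $v$ splits as $w_1 w_2$, you could perfectly well have $\chi(c_{w_1})=+1$ and $\chi(c_{w_2})=-1$, which produces the same pair of eigenvalues. So the conclusion that $F'$ is totally imaginary does not follow from the eigenvalues of complex conjugation. The argument that does work, and which underlies the paper's appeal to Fontaine--Mazur, is a Hecke-character one: the Hodge--Tate weights of $A_p$ are $[-1,1]$, hence distinct, so $\chi_p$ and $\chi_p^{\sigma}$ have different Hodge--Tate weights at the places above $p$; but a quadratic extension $F'$ of a totally real field that is not CM contains no CM subfield, hence (by Weil's theory) admits only algebraic Hecke characters whose infinity type is a power of the norm times a finite-order character, which would make $\chi_p$ and $\chi_p^{\sigma}$ have the same Hodge--Tate weight, a contradiction. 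If you replace your sign calculation with this, your proof closes, though it remains a heavier argument than the paper's (it additionally leans on the Mumford--Tate conjecture for K3 to justify the passage from the $p$-adic decomposition to a decomposition of the rational Hodge structure, whereas the paper stays entirely on the Galois side).
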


\begin{proof} Since $T_p$ carries a generalized orthogonal form,  it follows that $A_p$ and $B_p$ are both  also orthogonal and in particular self-dual up to twist.
Hence either $A_p$ had Hodge-Tate weights $[-1,1]$ and $B_p$ has Hodge--Tate weights $[0,0,0]$, or $A_p$ has Hodge-Tate weights $[0,0]$ and $B_p$ has Hodge-Tate weights $[-1,0,1]$. The Galois representation associated to $A_p$ has image in $\mathrm{GO}_2(\bbZ_p)$, thus is potentially abelian. 

The Fontaine-Mazur conjecture is known for potentially abelian representations. In particular, if $A_p$ has Hodge-Tate weights $[0,0]$, then it has finite image, which would contradict the Tate conjecture as in the proof of Lemma \ref{noone}.  Hence for any such $p$ we may assume that $A_p$ has Hodge-Tate weights $[-1,1]$ and $A_p(-1)$ is the Galois representation associated to a CM modular form of weight $3$ with good reduction outside $S$. The uniform boundedness of level at primes in $S$ is deduced from property (4) of the compatible system $\mathcal{S}$. 
\end{proof}

We now prove the desired irreducibility statement.

\begin{thm} The representation $T_p$ is absolutely irreducible for all $p$ with monodromy group containing $\mathfrak{sp}_4$.
\end{thm}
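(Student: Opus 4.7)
The plan is to apply Lemma \ref{foroneforall}: it suffices to prove that $T_p$ is irreducible for a single $p$, since absolute irreducibility then propagates to every $p$ and already implies the monodromy bound $\mathfrak{so}_5\cong\mathfrak{sp}_4$. So I would argue by contradiction. If $T_p$ is reducible for some $p$, then Lemma \ref{noone} forbids any one-dimensional factor and the induced orthogonal pairing forces a decomposition $T_p=A_p\oplus B_p$ with $\dim A_p=2$ and $\dim B_p=3$, both summands irreducible and self-dual up to twist. Lemma \ref{bounded} rules out the Hodge--Tate pattern $([0,0],[-1,0,1])$, so we may assume $A_p$ has weights $[-1,1]$ and $B_p$ has weights $[0,0,0]$.

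The heart of the argument is then to show that $B_p$ must be an Artin representation. Assembling the properties of the compatible system: $B_p$ is unramified outside $S$; at places $v\mid p$ with $v\notin S$ it is crystalline with all Hodge--Tate weights zero, hence unramified; at places $v\mid p$ with $v\in S$ it is de Rham with trivial Hodge filtration, hence potentially crystalline with trivial filtration, hence potentially unramified; and at places $v\in S$ with $v\nmid p$ property (4) provides a fixed finite extension over which the inertial action becomes unipotent, while purity of weight $0$ forces the monodromy to vanish. Passing to a single finite Galois extension $F'/F$ that simultaneously resolves all of these local obstructions, the restriction $B_p|_{G_{F'}}$ is everywhere unramified, so Hermite--Minkowski collapses it through a finite quotient; thus $B_p$ itself has finite image.

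With $B_p$ now a three-dimensional Artin sub-representation of $T_p\subset H^2(X_{\bar F},\bbQ_p)(1)$ of pure Hodge--Tate weight $0$, the contradiction is completed exactly as in Lemma \ref{noone}. After passing to a further finite extension $F''/F'$ on which $G_{F''}$ acts trivially on $B_p$, the three-dimensional space $B_p$ sits inside $H^2(X_{\bar F},\bbQ_p)(1)^{G_{F''}}$, which by the Tate conjecture for K3 surfaces (known by Andr\'e) equals $\mathrm{NS}(X_{F''})\otimes\bbQ_p$. Since $T$ is defined as the transcendental complement of $\mathrm{NS}$, we have $T_p\cap\mathrm{NS}_p=0$, forcing $B_p=0$ and contradicting $\dim B_p=3$.

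The step I expect to be the main obstacle is the finite-image claim for $B_p$: cleanly combining the ``de Rham with HT weights zero'' condition at places of $S$ above $p$ with the ``potentially unipotent inertia'' condition at ramified non-$p$-adic places in $S$, and then producing a single finite base change that kills all local ramification at once. Once that potential-everywhere-unramifiedness is in hand, the appeals to Hermite--Minkowski and then to the Tate conjecture are routine.
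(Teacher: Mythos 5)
Your proposal takes a genuinely different route from the paper, and the route has a real gap at its crucial step.

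Both you and the paper begin the same way: reduce via Lemma \ref{foroneforall} to assuming $T_p$ reducible for all $p$, invoke Lemma \ref{noone} and Lemma \ref{bounded} to get the decomposition $T_p = A_p \oplus B_p$ with Hodge--Tate weights $[-1,1]$ and $[0,0,0]$ respectively. From there you diverge. The local $p$-adic Hodge theory you assemble for $B_p$ is essentially correct: crystalline with all Hodge--Tate weights zero does force unramifiedness at $v \mid p$, $v \notin S$ (weak admissibility forces all Newton slopes to vanish, and a crystalline filtered $\varphi$-module with trivial filtration and unit-root Frobenius corresponds to an unramified representation); the de Rham with trivial filtration claim at $v \in S$, $v \mid p$ likewise forces $N=0$ after a finite extension; and at $v \in S$, $v\nmid p$ you need local purity (weight--monodromy), which is available for $H^2$ of a surface by Rapoport--Zink even though the paper never needs to invoke it. So the conclusion that $B_p$ is potentially everywhere unramified is defensible.

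The gap is the next step: \emph{Hermite--Minkowski does not imply that an everywhere-unramified $p$-adic representation has finite image.} Hermite--Minkowski bounds the number of extensions of a fixed number field of bounded degree unramified outside a given set; it says nothing about the image of a continuous $p$-adic representation, and indeed the Hilbert class field tower of a number field can be infinite, so an everywhere-unramified $p$-adic Galois representation can a priori have infinite image. The assertion that a geometric $p$-adic representation with all Hodge--Tate weights zero has finite image \emph{is} the Fontaine--Mazur conjecture in this weight, and it is open for $3$-dimensional representations. The paper is careful precisely here: it invokes Fontaine--Mazur only in the potentially abelian case (where it is a theorem), and in the complementary case where $B_p$ has monodromy $\mathfrak{sl}_2$ acting by $\mathrm{Sym}^2$ it derives the contradiction differently, via a Newton-over-Hodge argument at ordinary primes that pins down $P_{B,v}(X)$ to a single polynomial on a positive-density set, which then collides with Cebotarev. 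Your argument assumes the hard part.

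A patch in the spirit of your approach would observe that by purity of weight $0$ and integrality of $P_v(X)$, the Frobenius eigenvalues of $B_p$ at good places are algebraic integers of complex absolute value $1$ and bounded degree, hence bounded-order roots of unity by Kronecker's theorem; then Cebotarev and a compactness/averaging argument on the $p$-adic analytic image give finite image without any appeal to Fontaine--Mazur. But that replacement is exactly the arithmetic input the paper uses; the unramifiedness you derive from $p$-adic Hodge theory cannot by itself substitute for it.
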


\begin{proof} By Lemma \ref{foroneforall}, it suffices to assume that $T_p$ is reducible for all primes $p$ and reach a contradiction.
If $T_p$ is reducible, then by the previous lemma the 3-dimensional factor $B_p$ has Hodge-Tate weights $[0,0,0]$, and for each $p$, the 2-dimensional Galois representation $A_p(-1)$ comes from one of only finitely many weight $3$ CM modular forms $\pi$. All such forms are induced from a finite set of imaginary quadratic fields (those unramified outside $S$).

Hence we may assume that, for some set of primes $\Sigma$ of strongly positive density that split completely in each of these quadratic fields, that $A_p$ is associated to a fixed such $\pi$, and in particular $A_p = \mathrm{Ind}^{G_F}_{G_E} \chi_p$ for a fixed imaginary quadratic field $E/F$ and Gr\"ossencharacter $\chi_p$ of $E$. 

Let $P_{A_p,v}(X)$ denote the characteristic polynomial of Frobenius on $A_p(-1)$ at $v$ for $v \notin S \cup \{p\}^F$, and let $P_{A_p,v}(X)$ denote the characteristic polynomial of crystalline Frobenius on $A_p(-1)$ at $v$ assuming that $v \mid p$ and $v \notin S$. Because the Gr\"ossencharacters give rise to a weakly compatible system, these polynomials do not depend on $p$ as long as $p \in \Sigma$. Hence we may write them as $P_{A,v}(X)$, and similarly we may write $P_{B,v}(X)$. Since $A_p(-1)$ occurs in $H^2(X,\Q_p)$ we also see that these polynomials have integral coefficients. 

We now have a factorization
$$P_v(X) = P_{A,v}(X) P_{B,v}(X).$$
for all $v$. Recall that the Hodge-Tate weights of $\rho_p$ are $[0,1,1,1,2]$. On the other hand, since primes $p \in \Sigma$ split in $E$ by construction, the representation $A_p$ is ordinary at $p$ and hence the Newton Polygon of $P_{A,v}(X)$ has slopes $0$ and $2$. By Newton over Hodge, this implies that the Newton Polygon of $B_p$ has all slopes equal to $1$. That means that the roots of $P_{B,v}(X)$
are all divisible by $p$ as algebraic integers. But by purity they also have absolute value $p$ for all complex embeddings, and thus it follows that the roots of $P_{B,v}(pX)$ are roots of unity. Since $P_v(X)$ is a degree $5$ polynomial over $\Q$, they are indeed roots of unity of bounded order. Since there are only finitely many such roots of unity, we deduce that for a set $\Sigma$ of strongly positive density, the polynomial $P_{B,v}(X)$ is given by a fixed polynomial. On the other hand, up to twist, the Galois representation associated to $B_p$ has image inside $\mathrm{SO}_3(\Q_p)$. Hence either the monodromy group of $B_p$ is connected, in which case $B_p$ would have Lie algebra ${\mathfrak{sl}}_2$ acting via the symmetric square, or $B_p$ is potentially abelian with finite image or induced.

In the latter case we can again apply the Fontaine-Mazur conjecture to get a contradiction, so we may assume that the monodromy group of $B_p$ has Lie algebra $\mathfrak{sl}_2$. But then from the Cebotarev density theorem we see that any subset of primes with a fixed characteristic polynomial has density zero, a contradiction. 

\end{proof}

In order to apply our potential modularity results in chapter 6 we need an extra lemma on ordinariness: 
\begin{lem} \label{ordinary}
The representation $T_p(-1)$ contains at least one crystalline eigenvalue which is a $p$-adic unit for
a set of primes $p$ of density one.
The representations $\rho_{\lambda}$ are also ordinary for all $\lambda \mid p$  with distinct unit eigenvalues mod $\lambda$
for  a set of primes $p$ of density one.
\end{lem}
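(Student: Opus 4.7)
The plan is to reduce the lemma to the ordinariness of the abelian surface (or fake abelian surface) $A/E$ produced by Theorem \ref{prep}(2), and then transfer the unit-eigenvalue conclusion back to $\rho_p$ via the explicit $\wedge^2$ realization of the exceptional isogeny $\GSp_4\to\mathrm{GO}_5$.

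First I would set up the geometric input. Theorem \ref{prep}(2) gives a totally real Galois extension $E/F$ with $\Gal(E/F)=(\bbZ/2\bbZ)^m$ and an abelian variety $A/E$---either an abelian surface with $\End(A)=\bbZ$ or a fake abelian surface with $\End(A)$ an order in a quaternion algebra over $\bbQ$---whose associated compatible system agrees with $\mathcal{R}|_{G_E}$ and which has descent data to $F$. By the preceding theorem the monodromy of $\mathcal{S}$ contains $\mathfrak{so}_5\cong\mathfrak{sp}_4$, so $\mathcal{R}$ has monodromy containing $\mathfrak{sp}_4$, and hence $A$ satisfies the large image property of Definition \ref{large}. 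Lemma \ref{suff} then supplies a set of good primes of $A$ of relative density one in the set of rational primes splitting completely in $E$; for each such $p$ and each $\mathfrak{w}\mid p$ in $E$, the Frobenius eigenvalues of $A$ at $\mathfrak{w}$ take the form $\alpha,\beta,p/\alpha,p/\beta$ with $\alpha,\beta$ being $p$-adic units and distinct modulo $p$.

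Next I would push this through to $\mathcal{R}$ over $F$. The descent-data construction in \S 2.3 shows that the Frobenius eigenvalues of $\tilde{\rho}_\lambda$ at any $v\mid p$ in $F$ differ from those of $A$ only by multiplication by the value of a finite-order character $\chi$; after absorbing $\chi$ into a further totally real extension of $F$ as in the untwisting discussion of \S 2, we may take $\chi=1$ and conclude that $\tilde{\rho}_\lambda(\Frob_v)$ has eigenvalues $\alpha,\beta,p/\alpha,p/\beta$ with distinct unit eigenvalues modulo $\lambda$, which is the second sentence of the lemma. For the first sentence, the $\wedge^2$ realization of the isogeny $\GSp_4\to\mathrm{GO}_5$ gives the six eigenvalues of $\Frob_v$ on $\wedge^2\tilde{\rho}_\lambda$ as
\[
\alpha\beta,\ p,\ p\alpha/\beta,\ p\beta/\alpha,\ p,\ p^2/(\alpha\beta),
\]
and quotienting by the Frobenius-eigenline spanned by the symplectic form (eigenvalue $p$) yields a five-dimensional orthogonal representation with eigenvalues $\alpha\beta,\ p,\ p\alpha/\beta,\ p\beta/\alpha,\ p^2/(\alpha\beta)$. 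By Theorem \ref{prep} this representation is isomorphic to $\rho_p\otimes\eta$, so $\rho_p$ has eigenvalue $\alpha\beta\cdot\eta(\Frob_v)^{-1}$, which remains a $p$-adic unit because $\eta$ is of finite order. This gives the required crystalline unit eigenvalue of $T_p(-1)=\rho_p$.

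The main obstacle I expect is the careful bookkeeping between the fields $E$ and $F$: one must verify that the density-one conclusion of Lemma \ref{suff}, phrased relative to rational primes splitting completely in $E$, is the correct interpretation of ``density one'' in the present statement, and that the finite-order twists introduced both by the descent from $E$ to $F$ and by the quadratic character $\eta$ preserve both the unit property and the distinctness modulo $\lambda$ of the Frobenius eigenvalues.
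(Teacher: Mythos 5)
Your proposal takes a genuinely different route from the paper: you reduce the entire statement to Lemma \ref{suff} applied to the abelian (or fake abelian) surface $A/E$ furnished by Theorem \ref{prep}(2), and then push eigenvalue information back to $T_p$ through the $\wedge^2$ realization of $\GSp_4\to\mathrm{GO}_5$. The paper, by contrast, argues directly on the compatible systems $\mathcal{S}$ and $\mathcal{R}$: for the first sentence it observes that if no crystalline eigenvalue of $T_p(-1)$ is a unit then $p\mid t_v$ for the trace coefficient $t_v$ of $P_v(X)$, invokes purity to bound $|t_v|\le 5p$, and rules out $t_v=ap$ on a positive-density set by a Chebotarev/linear-relation argument as in Lemma \ref{nolinear}; for the second it works with $Q_v(X)$ and uses the $\Gal(M/\bbQ)$-descent of $\mathcal{R}$ (the relations $a_v^\sigma=\chi(\sigma)a_v$, $b_v^\sigma=\chi(\sigma)^2 b_v$) to show that equality of unit roots modulo one $\lambda\mid p$ forces $p\mid a_v^2-4b_v$, which again fails off a density-zero set. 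Your eigenvalue bookkeeping for $\wedge^2$ is correct, and you are right that twisting by the finite-order characters $\chi$ and $\eta$ does not disturb either the unit property or distinctness modulo $\lambda$; in fact you do not even need to ``absorb $\chi$ by a further extension'' since multiplication by a root of unity already preserves both.

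The genuine gap is the density. Lemma \ref{suff}, applied to $A/E$, only gives good primes of relative density one among the rational primes that split completely in $E$; since $\Gal(E/F)\cong(\bbZ/2\bbZ)^m$ can be nontrivial, that is a set of density at most $1/[E:\bbQ]<1$, and your method says nothing at all about primes that do not split in $E$. The lemma asserts density one among \emph{all} rational primes, and the paper's direct argument really does achieve this because it works with $P_v(X)$ and $Q_v(X)$ intrinsically, without first restricting to $G_E$. Your version would be adequate for the actual downstream use (the Corollary only needs to exhibit two good primes $p$ and $q$), but it does not prove the lemma as stated. To close the gap you would need the paper's kind of global argument---bound the relevant integer $t_v/p$ or $(a_v^2-4b_v)/p$ using purity and then kill every fixed value by a positive-density/monodromy contradiction---rather than delegating to Lemma \ref{suff} over the smaller field $E$.
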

  
\begin{proof} 
Note that $T_p(-1)$ has Hodge-Tate weights $[0,1,1,1,2]$. If all crystalline eigenvalues were not $p$-adic units, then the polynomial $P_v(X) = X^5 + t_v X^4 + \ldots + p^5 \eta(p)$ would have $p \mid t_v$. By purity, we have $|t_v| \le 5p$. Thus we would have to have $t_v/p = a$ for some fixed integer $|a| \le 5$, and some set of primes $v$ of strongly positive density. Arguing
as in the proof of Lemma \ref{nolinear}, this contradicts the fact that the monodromy group of $T_p$ contains $\mathfrak{so}_5$ for any prime $p$. This proves the first part of the lemma.
  
Now suppose $\tilde{\rho}_{\lambda}: G_{F} \rightarrow \GSp_4(\bar{\Q}_p)$ be the representation lifting $T_p(-1)$. Note that all of the crystalline Frobenius eigenvalues of $\tilde{\rho}_\lambda$ will be algebraic integers. Suppose they are $\alpha_v, \beta_v, p\zeta\alpha_v^{-1}, p\zeta\beta_v^{-1}$,  where $\zeta$ is a root of unity of uniformly bounded order.
Up to symmetry, if $T_p(-1)$ has a crystalline eigenvalue which is an $p$-adic unit then we may assume that $\alpha_v \beta_v$ is prime to $p$, and hence that $\rho_{\lambda}$ is ordinary for all $\lambda \mid p$ and a set $p$ of density one.

Write \begin{align*}
    Q_{v}(X) &= X^4 - a_v X^3 + b_v X^2 - p\zeta a_v X + p^2\zeta^2
\\&= (X - \alpha_v)(X - \beta_v)(X - p\zeta\alpha_v^{-1})(X - p\zeta\beta_v^{-1}) \in \OO_M[X].
\end{align*}
The ordinary assumption implies that $b_v \not\equiv 0 \bmod \lambda$ for all $\lambda \mid p$.
Suppose that the unit root eigenvalues are the same for some $\lambda \mid p$. Then $a^2_{v} - 4 b_{v} \equiv 0 \bmod \lambda$.
On the other hand if $\sigma \in \Gal(M/\Q)$, then $a^{\sigma}_v = \chi(\sigma) a_v$ and $b^{\sigma}_v = \chi(\sigma)^2 b_v$
for some finite order character $\chi$, and so $(a^{\sigma}_v)^2 - 4 b^{\sigma}_v = \chi(\sigma)^2 (a^2_v - 4 b_v)$,
hence if this is divisible by one $\lambda$ then it is divisible by all, and thus (at least for $p$ unramified in $M$)
that $a^2_v - 4 b_v$ is divisible by $p$. Since $\rho_{\lambda}$ is pure of weight one, it follows that $(a^2_v - 4 b_v)/p \in \OO_M$ has absolute value bounded uniformly in $p$, and there are only finitely many such elements in $\OO_M$ with this property.

But once more arguing as in the proof of Lemma \ref{nolinear},
the fact that the image of $\tilde{\rho}_{\lambda}$ has monodromy group containing $\mathfrak{sp}_4$ for all $\lambda$ shows that the eigenvalues would be different modulo $\lambda$ for a set of primes $p$ of relative density one.
\end{proof} 

Now that we have constructed the abelian variety $A/E$ with descent data to $F$ that gives the compatible system $\mathcal{R}$ and proved that it is strongly positive ordinary, thus we have:
\begin{cor} The compatible system of $\GSp_4$ representation $\mathcal{R}$ constructed in Theorem \ref{prep} that corresponds to the transcendental motive $T$ is potentially modular. Therefore, Theorem \ref{main} is true when $X$ has Picard rank 17. 
\end{cor}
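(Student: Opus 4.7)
The plan is to verify the hypotheses of Theorem \ref{potmod} for the compatible system $\mathcal{R}$ of Theorem \ref{prep}, apply that theorem to deduce potential modularity of $\mathcal{R}$, and then transfer the conclusion back to the transcendental motive $T$.

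By Theorem \ref{prep}(2), after a totally real Galois extension $E/F$ with $\Gal(E/F) = (\bbZ/2\bbZ)^m$, the restriction $\mathcal{R}|_{G_E}$ is associated either to an abelian surface or to a fake abelian surface $A/E$ (with descent data to $F$). In either case the endomorphism algebra is $\bbZ$ or an order in a quaternion algebra $D/\bbQ$, so the coefficient field is $\bbQ$ and $\mathcal{R}|_{G_E}$ falls into the second case of Theorem \ref{potmod}. Next, the preceding absolute irreducibility theorem shows that the monodromy Lie algebra of each $\tilde{\rho}_\lambda$ contains $\mathfrak{sp}_4$; since the Lie algebra is unaffected by restriction to the finite-index subgroup $G_E$, and since Lemma \ref{largeLie} forces it to be either $\mathfrak{sp}_4$ or $\mathfrak{so}_4$, we conclude it must equal $\mathfrak{sp}_4$, verifying the large image property of Definition \ref{large}. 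Absolute irreducibility also rules out the CM case.

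Applying Theorem \ref{potmod} (case ii) with base field $E$ then yields a totally real Galois extension $F'/E$ such that $\mathcal{R}|_{G_{F'}}$ is modular. Since $F'$ is automatically a totally real extension of $F$, this proves that $\mathcal{R}$ itself is potentially modular, establishing the first assertion of the corollary.

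For the deduction of Theorem \ref{main} at Picard rank $17$, recall from Theorem \ref{prep}(1) that $\tilde{\rho}_\lambda$ lifts $\rho_l \otimes \eta$ through the degree-$2$ isogeny $\GSp_4 \to \mathrm{GO}_5$. Modularity of $\tilde{\rho}_\lambda|_{G_{F'}}$ via an automorphic representation of $\GSp_4(\mathbb{A}_{F'})$ therefore yields automorphy of $\rho_l|_{G_{F'}} \otimes \eta|_{G_{F'}}$ through the standard representation of $\mathrm{SO}_5 \cong \mathrm{PGSp}_4$. Since $\eta$ is at most quadratic, after passing to a further totally real extension on which $\eta$ becomes trivial we obtain potential modularity of the transcendental motive $T$. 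Combined with the Artin--Brauer theorem applied to the Artin motive $NS(X)$, this yields the Hasse--Weil conjecture for $X$ as explained in the introduction, completing the case $\rho = 17$. The most delicate step is the last one: the passage from $\GSp_4$-automorphy of $\tilde{\rho}_\lambda$ to $\mathrm{GO}_5$-automorphy of the associated $5$-dimensional representation, which requires the functorial transfer underlying the Kuga--Satake correspondence that was implicit in constructing $\mathcal{R}$ in the first place.
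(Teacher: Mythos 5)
Your proposal is correct and follows essentially the same route as the paper: identify that $\mathcal{R}|_{G_E}$ falls into case (ii) of Theorem \ref{potmod}, check the remaining hypotheses (large image, non-CM), apply the potential modularity theorem, and transfer back to $T$. The one structural difference is the source of good primes: the paper's proof appeals explicitly to Lemma \ref{ordinary}, which establishes distinguished-ordinariness directly from the $K3$ crystalline eigenvalues, whereas you let the second case of Theorem \ref{potmod} do the work, which internally invokes Lemma \ref{suff} via the large image hypothesis you verify. Both are valid; the paper's citation of Lemma \ref{ordinary} is somewhat belt-and-suspenders since once large image is in place Lemma \ref{suff} already supplies a density-one set of good primes. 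Your proposal does more than the paper in two respects: it spells out the verification of the large image property via the preceding absolute irreducibility theorem combined with Lemma \ref{largeLie}, and it records the final transfer from $\GSp_4$-automorphy of $\tilde\rho_\lambda$ to $\mathrm{GO}_5$-automorphy of $\rho_l\otimes\eta$ through the degree-two isogeny, which the paper leaves implicit. One thing to flag: you asserted without comment that the extension $E/F$ of Theorem \ref{prep}(2) is totally real — the paper only says $\Gal(E/F)\cong(\bbZ/2\bbZ)^m$, so strictly speaking one must check that the quadratic twists there can be trivialized by totally even characters, a point worth making explicit when invoking the $\GSp_4$ machinery over $E$.
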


\begin{proof}
The compatible system defined in Theorem \ref{prep} falls into the second case of Theorem \ref{potmod}, where we consider $\mathcal{R}$ to be the compatible system of representations of $G_E$ with coefficients in $\bbQ$ for a set of primes of density one. In fact, Lemma \ref{ordinary} shows that we can find good primes $p$ and $q$ to perform the trick in Theorem \ref{trick} and thus the desired result just follows from Theorem \ref{potmod}.   
\end{proof}

\subsection{The Geometric Picard Rank $\ge 18$}

Now suppose $X/F$ has geometric Picard rank $18$. Then associated to $T_p(-1)$ is a compatible system $\mathcal{S}=(M,S,\{Q_v(X)\},\{\rho_p\},[0,1,1,2])$. We now derive similar results for $T_p$: 

\begin{lem}
The semisimple part of the Lie algebra of the monodromy group of $T_p$ is $\mathfrak{so}_4 \simeq  {\mathfrak{sl}}_2 \oplus {\mathfrak{sl}}_2$.
\end{lem}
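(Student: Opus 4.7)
The plan is to follow the strategy of Lemma \ref{foroneforall}, adapting it to the four-dimensional orthogonal setting. Since $T_p$ carries an orthogonal pairing (up to a twist by $\eta$), the Zariski closure of the image of $G_F$ in $\GL(T_p)$ lies in $\mathrm{GO}_4$, so the semisimple part $\mathfrak{g}^{ss}$ of its Lie algebra is a semisimple subalgebra of $\mathfrak{so}_4\simeq\mathfrak{sl}_2\oplus\mathfrak{sl}_2$. Up to conjugacy the possibilities are $\mathfrak{g}^{ss}=0$, $\mathfrak{g}^{ss}=\mathfrak{sl}_2$ embedded in a single factor, $\mathfrak{g}^{ss}=\mathfrak{sl}_2$ embedded diagonally, or $\mathfrak{g}^{ss}=\mathfrak{so}_4$; I plan to rule out the first three.

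The first step is to establish the analogue of Lemma \ref{noone}: for every finite extension $F'/F$, $T_p|_{G_{F'}}$ admits no one-dimensional subrepresentation. A hypothetical one-dimensional sub would be Hodge--Tate with HT weight in the multiset $\{-1,0,1\}$ appearing in $T_p$. Purity excludes HT weights $\pm 1$ (the corresponding character would have the wrong weight to sit in $T_p$), so it must have HT weight $0$ and hence be finite order; trivializing this character over a further finite extension $F''/F'$ produces a nonzero $G_{F''}$-invariant in $T_p(-1)\subset H^2(X,\bbQ_p(1))$, and by the Tate conjecture for K3 surfaces over number fields (Andr\'e) every such invariant is spanned by N\'eron--Severi classes, contradicting the transcendental nature of $T$.

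Given Step 1, ruling out $\mathfrak{g}^{ss}=0$ is immediate: passing to a finite extension on which the monodromy becomes connected, the image sits in a torus and $T_p\otimes\bar{\bbQ}_p$ splits into four one-dimensional characters, contradicting Step 1. Likewise, for the diagonal embedding $\mathfrak{sl}_2\hookrightarrow\mathfrak{so}_4$, the standard $4$-dimensional representation of $\mathfrak{so}_4$ restricts to $V(2)\oplus V(0)$ (where $V(n)$ denotes the $(n+1)$-dimensional irreducible $\mathfrak{sl}_2$-representation); the trivial summand $V(0)$ yields, after a finite extension, a one-dimensional Galois subrepresentation of $T_p$, again contradicting Step 1.

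The remaining and most delicate case is the embedding of $\mathfrak{sl}_2$ into a single factor of $\mathfrak{so}_4$, where the standard representation restricts to $V(1)^{\oplus 2}$. After passing to a finite extension over which the monodromy is connected, one has $T_p\cong V_1\oplus V_2$ with each $V_i$ a two-dimensional irreducible Galois representation on which $\mathfrak{sl}_2$ acts as $V(1)$; by Schur's lemma $V_2\cong V_1\otimes\tilde\chi$ for some Galois character $\tilde\chi$ of Hodge--Tate weight $c$. Purity forces each $V_i$ to be of weight $0$, so $\det V_i$ is finite-order with HT weight $0$, and the HT weights of each $V_i$ sum to $0$. The only partition of the HT multiset $\{-1,0,0,1\}$ of $T_p$ into two such pairs is $\{-1,1\}\sqcup\{0,0\}$; but then $V_2=V_1\otimes\tilde\chi$ would force either $\{c-1,c+1\}=\{0,0\}$ or $\{c,c\}=\{-1,1\}$, neither of which has a solution. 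This closes the last case and yields $\mathfrak{g}^{ss}=\mathfrak{so}_4$. The main obstacle is precisely this one-factor embedding, where the contradiction requires simultaneous control of purity (which pins down $\det V_i$) and of the HT weight multiplicities of $T_p$.
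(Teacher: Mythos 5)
Your proof is correct, and it runs parallel to the paper's, with the case analysis organized around the Lie algebra rather than around the restriction of the orthogonal form. The paper first invokes the no-one-dimensional-factor lemma (Lemma \ref{noone}, which applies verbatim here) to force any putative decomposition into two-dimensional pieces $T_p \otimes \bar{\bbQ}_p \simeq A_p \oplus B_p$, and then splits on whether the orthogonal form restricts nondegenerately to both summands: if so, both have image in $\mathrm{GO}_2$, so $T_p$ is potentially abelian --- this is your $\mathfrak{g}^{ss}=0$ case; if not, the form identifies $B_p$ with a twist of $A_p^{\vee}$, and self-duality of two-dimensional representations gives $T_p \simeq A_p \oplus A_p\otimes\chi$ --- this is your one-factor $\mathfrak{sl}_2$ case. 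The paper then observes that purity forces $\chi$ to have \emph{finite} image, so the Hodge--Tate weights of $T_p$ would all have multiplicity two, contradicting $[0,1,1,2]$. Your treatment of the same case is correct but slightly longer than it needs to be: once you know $\det V_1$ and $\det V_2$ are finite order, so is $\tilde\chi^{2}=\det V_2/\det V_1$, hence $\tilde\chi$ is finite order and $V_1,V_2$ have identical Hodge--Tate weights, after which the multiplicity obstruction closes the case directly without enumerating partitions. Your diagonal-$\mathfrak{sl}_2$ case (restriction $V(2)\oplus V(0)$) does not appear explicitly in the paper because the trivial summand is a one-dimensional factor, already excluded at the first step. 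In sum the two arguments rest on the same two ingredients --- the Tate-conjecture exclusion of one-dimensional factors over any finite extension, and the Hodge--Tate multiplicity obstruction in the $V\oplus V\otimes\chi$ configuration --- and differ only in whether the intermediate case split is phrased via subalgebras of $\mathfrak{so}_4$ or via nondegeneracy of the restricted form.
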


\begin{proof} Suppose that $T_p$ is not absolutely irreducible. 
Since Lemma 8.2 also applies here, it follows that $T_p \otimes \bar{\bbQ}_p$ it is a direct sum of two factors $A_p$ and $B_p$ of dimension $2$. We show that this cannot occur even over some finite extension of $F$. If $A_p$ and $B_p$ are both orthogonal, they are both induced, and then $T_p$ is potentially abelian.
Thus without loss of generality $A_p$ is not orthogonal, which implies that the dual of $A_p$ under the orthogonal pairing is a twist of $B_p$. Since two-dimensional representations are self-dual up to twist, it follows that $T_p \simeq A_p \oplus A_p \otimes \chi$ for some character $\chi$ which will be algebraic. 

But since $T_p$ is pure of weight $1$, it follows that $A_p$ is also pure of weight one, and that $\chi$ is pure of weight zero, and thus has finite image. But then the Hodge--Tate weights of $T_p$ must each have multiplicity two, which is a contradiction.

Suppose that $T_p$ is absolutely irreducible but becomes reducible after a finite extension. The factors must each have the same dimension. If they have dimension one then $T_p$ is potentially abelian. If they have dimension $2$ then $T_p$ is once more of the form $A_p \oplus B_p$ after restricting to some finite extension which we have already considered. Thus $T_p$ remains irreducible over any finite extension and the only possibility is that the monodromy group is $\mathfrak{so}_4$.
\end{proof}

There is an isogeny $\GL_2(\bar{\bbQ}_p) \times \GL_2(\bar{\bbQ}_p) \rightarrow \mathrm{GO}_4(\bar{\bbQ}_p)$ whose image is the connected subgroup of index two. By \cite{lp}, the component group of a compatible family is independent of $p$, and thus there exists an  (at most) degree two extension $L/F$ such that the image of $T_p$ lands inside this image.

\begin{thm}
\label{preptwo} Suppose that there exists a prime $p$ for which $T_p$  has monodromy group with
Lie algebra $\mathfrak{so}_4$.
Then there exists an extension $L/F$ of degree at most two over which the monodromy of $T_p$ is connected such that:
\begin{enumerate}
\item[(1)] \label{comptwp} There exists  simple abelian varieties $E_1/L$ and $E_2/L$ and
weakly compatible families
$$\mathcal{R}_i = (M,S,\{Q_{v,i}(X)\},\{\tilde{\rho}_{\lambda,i}\},[0,1])$$
 of Galois
representations such that, for all $l$ and all $\lambda\mid l$,  the representation $\tilde{\rho}_{\lambda,i}$ is  irreducible, occurs inside $H^1(E_i,\bar{\bbQ}_l)$
as a $\Gal(\bar{\bbQ}/L)$ representation, and 
the composite
$$\tilde{\rho}_{\lambda}: G_{L} \rightarrow \GL_2(\bar{\bbQ}_p) \times \GL_2(\bar{\bbQ}_p) \rightarrow \mathrm{GO}_4(\bar{\bbQ}_l)$$
is isomorphic to $\rho_l $ restricted to $G_L$. 
\item[(2)]  \label{faketwo} There exists a finite Galois extension $H/L$ such that the restriction of these compatible families comes 
from either  elliptic curves $E_i/H$ or  fake elliptic curves $E_i/H$ with descent data to $L$, that is, there exist isogenies between $E_i$ and $E^{\sigma}_i$
for any $\sigma \in \Gal(H/L)$.
\end{enumerate}
\end{thm}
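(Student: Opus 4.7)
The plan is to closely mirror the proof of Theorem \ref{prep}, with the isogeny $\GSp_4 \to \mathrm{GO}_5$ used there replaced throughout by the isogeny $\GL_2 \times \GL_2 \to \mathrm{GO}_4$ onto the connected component. The preceding discussion already furnishes the extension $L/F$ of degree at most two over which $T_p|_{G_L}$ has connected monodromy and factors through the image of this isogeny, so the whole argument runs over $G_L$ from the start.

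The first step is to apply the motivic lifting results of \cite[\S 3.2]{Patrikis} over $G_L$ to produce a pair of continuous representations $\tilde{\rho}_{p,i}: G_L \to \GL_2(\bar{\bbQ}_p)$ for $i=1,2$, each with Hodge--Tate weights $[0,1]$, whose external tensor product composed with the isogeny recovers $\rho_p|_{G_L}$. Then Proposition 4.2.31 of \cite{Patrikis} shows that each $\tilde{\rho}_{p,i}$ occurs inside $H^1(E_i,\bar{\bbQ}_p)$ for some abelian variety $E_i/L$ (after possibly enlarging $L$, which is allowed). Feeding each $E_i$ into Lemma \ref{extend} produces the required weakly compatible systems $\mathcal{R}_i = (M,S,\{Q_{v,i}(X)\},\{\tilde{\rho}_{\lambda,i}\},[0,1])$; the compatibility of the resulting $\mathrm{GO}_4$-system with $\rho_l|_{G_L}$ at all primes $l$ follows from the compatibility at one prime by the rigidity of compatible systems, establishing part (1).

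For part (2), I would then run the same coefficient-conjugation argument used in Theorem \ref{prep}. Given $\sigma \in \Gal(M/\bbQ)$, the conjugated pair $(\mathcal{R}_1^\sigma,\mathcal{R}_2^\sigma)$ must still compose to $\rho_l|_{G_L}$ up to a quadratic twist, since $\rho_l|_{G_L}$ has coefficients in $\bbQ$; consequently $\sigma$ either fixes each $\mathcal{R}_i$ up to a quadratic character or swaps the pair $\{\mathcal{R}_1,\mathcal{R}_2\}$. Passing to a finite Galois extension $H/L$ that kills all such quadratic twists and trivializes any swap forces $\mathcal{R}_i|_{G_H}$ to have $\bbQ$-coefficients. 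The $\GL_2$-variant of the argument in \cite[Lemma 10.3.2]{BCGP} then identifies $\mathcal{R}_i|_{G_H}$ with the compatible system of either an elliptic curve $E_i/H$ or a fake elliptic curve $E_i/H$ (an abelian surface with quaternionic multiplication). Since $\mathcal{R}_i$ is already defined on $G_L$ and differs from the compatible system of $E_i^\sigma$ by at most a twist for any $\sigma \in \Gal(H/L)$, this yields the required descent data isogenies $E_i \to E_i^\sigma$.

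The main obstacle is the first step: producing a \emph{pair} of $\GL_2$-representations lifting $\rho_p|_{G_L}$ through the two factors of $\GL_2 \times \GL_2$, rather than a single $\mathrm{GO}_4$-lift, so the motivic-lift framework of \cite{Patrikis} must be adapted to the two-factor setting. A subtle point for part (2) is handling the case where $\Gal(M/\bbQ)$ swaps $\mathcal{R}_1$ and $\mathcal{R}_2$; this is precisely the phenomenon that forces the fake elliptic curve alternative over the elliptic curve alternative, and aligns with the dichotomy stated in the theorem.
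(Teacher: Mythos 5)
Your proposal correctly mirrors the structure of Theorem~\ref{prep} for part~(1): lift through the two-factor isogeny via \cite[\S3.2]{Patrikis}, realize each factor in the cohomology of an abelian variety via Patrikis, and extend to compatible systems using Lemma~\ref{extend}. That much agrees in spirit with the paper (which cites Lemma~4.2.22 of \cite{Patrikis} rather than Proposition~4.2.31, but the idea is the same).

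The gap is in part~(2), and it concerns how you handle the possibility that some $\sigma\in\Gal(M/\bbQ)$ \emph{swaps} $\mathcal{R}_1$ and $\mathcal{R}_2$. You propose to "pass to a finite Galois extension $H/L$ that kills all such quadratic twists and trivializes any swap," but that cannot work: the $\Gal(M/\bbQ)$-action is on the \emph{coefficient field}, not on the base Galois group, so enlarging the base field $L$ to $H$ does nothing to undo a coefficient conjugation that interchanges the two systems. If some $\sigma$ sent $\mathcal{R}_1$ to a twist of $\mathcal{R}_2$, then after any base extension $\mathcal{R}_1|_{G_H}$ would still only be fixed (up to twist) by the proper subgroup of $\Gal(M/\bbQ)$ that does not swap, and you would obtain coefficients in a nontrivial subfield of $M$, not in $\bbQ$. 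Your further remark that the swap "forces the fake elliptic curve alternative" is also not how the dichotomy arises: that case comes from the $\GL_2$-analogue of \cite[Lemma~10.3.2]{BCGP} (endomorphism algebra $\bbQ$ versus a rational quaternion algebra), not from a swap of the two factors.

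What the paper actually does is \emph{rule out} the swap outright by a monodromy argument: if $\mathcal{R}_1^\sigma$ were a twist of $\mathcal{R}_2$, then $\mathcal{R}_1$ and $\mathcal{R}_2$ would have the same monodromy up to finite index, forcing the semisimple Lie algebra of $\mathcal{R}_1\otimes\mathcal{R}_2\simeq\mathcal{S}$ to be a single $\mathfrak{sl}_2$ rather than $\mathfrak{sl}_2\oplus\mathfrak{sl}_2$, contradicting the hypothesis on the monodromy of $T_p$. With the swap eliminated, one has $\mathcal{R}_i^\sigma\simeq\mathcal{R}_i\otimes\chi_\sigma$ for each $\sigma$, and only then can the finite extension $H/L$ (the fixed field of the twisting characters) produce $\bbQ$-coefficients. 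You should replace the "trivializes any swap" step with this monodromy argument; once that is done, the rest of your proof is sound and the invocation of \cite[Theorem~10.3.2]{BCGP} for the elliptic/fake-elliptic dichotomy and the descent data isogenies is exactly what the paper uses.
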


\begin{proof}
By \cite[\S3.2]{Patrikis}, the representation $\rho_p: G_{E} \rightarrow \mathrm{GO}_4(\Q_p)$ lifts to a representation $\tilde{\rho}: G_{E} \rightarrow \GL_2(\bar{\bbQ}_p) \times \GL_2(\bar{\bbQ}_p)$ with Hodge-Tate weights $[0,0,1,1]$ and monodromy group whose Lie algebra has semisimple part $\mathfrak{sl}_2\oplus \mathfrak{sl}_2$.
By \cite[Lemma 4.2.22]{Patrikis}, over some finite extension $E'/E$, the representation $\tilde{\rho}$ is part of a compatible family of Galois representations associated to (part of) some abelian variety. In particular, the $2$-dimensional constituents also come from abelian varieties $E_i/L$ giving rise to $\mathcal{R}_1$ and $\mathcal{R}_2$ respectively, with coefficients jointly in some finite extension $F$. Now consider the action of $\Gal(M/\Q)$ on the coefficients, we have
$$\mathcal{R}^{\sigma}_1 \otimes \mathcal{R}^{\sigma}_2 \simeq \mathcal{R}_1 \otimes \mathcal{R}_2 \simeq \mathcal{S},$$
because the latter has coefficients over $\Q$. 
If follows that $\mathcal{R}^{\sigma}_i = \mathcal{R}_{j} \otimes \chi_{\sigma}$ for each $\sigma$ where $j$ may or may not be equal to $i$,
and $\chi_{\sigma}$ is a finite order character. If there exists a $\sigma$ such that $\mathcal{R}^{\sigma}_1$ is a twist  of $\mathcal{R}_2$,
then their monodromy groups are the same on a finite index, which contradicts the fact that the relevant Lie algebra is $\mathfrak{sl}_2 \oplus \mathfrak{sl}_2$
and not $\mathfrak{sl}_2$. Hence $\mathcal{R}^{\sigma}_i$ is a twist of $\mathcal{R}_i$ for each $\sigma$. Letting $H/L$ denote the fixed field of these characters,
we deduce that $\mathcal{R}_i$ as a compatible family  over $G_L$ has coefficients in $\Q$. But now we are done
using the same idea of the proof of \cite[Theorem~10.3.2]{BCGP}. 
\end{proof}

\begin{thm}
Theorem \ref{main} is also true when $X$ has Picard rank $\ge 18$. 
\end{thm}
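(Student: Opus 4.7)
The plan is to split on the value of $\rho \in \{18, 19, 20\}$ (recalling that $\rho \le 20$ for any K3 surface in characteristic zero) and prove potential modularity of $T_p$ case by case, from which the Hasse--Weil conjecture for $X$ follows as in the $\rho = 17$ case via Brauer's theorem.

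For the central case $\rho = 18$, the transcendental motive $T$ has rank $4$ and this is precisely the setting of Theorem \ref{preptwo}. First I would apply that theorem to obtain an extension $L/F$ of degree at most two, together with compatible systems $\mathcal{R}_1, \mathcal{R}_2$ of $2$-dimensional Galois representations of $G_L$ arising from elliptic curves or fake elliptic curves $E_1, E_2$ over a further Galois extension $H/L$ with descent data to $L$, such that $\rho_p|_{G_L}$ is isomorphic (via the isogeny $\GL_2 \times \GL_2 \to \mathrm{GO}_4$) to $\tilde{\rho}_{\lambda,1} \otimes \tilde{\rho}_{\lambda,2}$. The first substantive step is to establish potential modularity of each $\mathcal{R}_i$ separately; this is the $\GL_2$ analogue of Theorem \ref{potmod} and can be obtained either by rerunning the arguments of chapters 3--6 in the simpler $\GL_2$ setting or, more directly, by citing Ribet's results on $\bbQ$-curves together with the potential modularity machinery over totally real fields, and by citing \cite{BCGP} \S10 for the fake elliptic curve case with descent data. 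Then, after passing to a further totally real Galois extension over which both $\mathcal{R}_i$ are modular, Ramakrishnan's tensor product functoriality from $\GL_2 \times \GL_2$ to $\GL_4$ yields a (cuspidal, since the monodromy Lie algebra of $T_p$ is $\mathfrak{sl}_2 \oplus \mathfrak{sl}_2$ rather than a single $\mathfrak{sl}_2$) automorphic representation of $\GL_4$ matching $\rho_p$, establishing its potential modularity.

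For $\rho = 20$, the motive $T$ has rank $2$ and $T_p$ takes values in $\mathrm{GO}_2$, which up to a quadratic twist is a torus; hence $T_p$ is potentially abelian, and after passing to an imaginary quadratic extension of $F$ it is induced from an algebraic Hecke character, so automorphy is a classical consequence of automorphic induction and the theory of Hecke characters (and this is the classical setting of singular K3 surfaces). For $\rho = 19$, arguments parallel to Lemma \ref{foroneforall} force the monodromy Lie algebra of $T_p$ to be $\mathfrak{so}_3 \cong \mathfrak{sl}_2$, so over a quadratic extension $T_p$ becomes (up to a finite-order twist) the symmetric square of a $2$-dimensional representation $V_p$; by the Patrikis-type lifting as in Theorem \ref{prep}, $V_p$ comes from a (fake) elliptic curve with descent data, and once $V_p$ is shown to be potentially modular by the same ingredients as the $\rho = 18$ argument, the Gelbart--Jacquet symmetric square functorial lift produces the required automorphic form for $T_p$.

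The main obstacle is the potential modularity of $\mathcal{R}_i$ in the $\rho = 18$ case when the underlying $E_i$ are fake elliptic curves with nontrivial descent data, since one must verify the large image, ordinariness, and vast--and--tidy hypotheses of the $\GL_2$ modularity lifting theorems after descending the action of $\Gal(H/L)$. All these inputs are however already present in the literature (Ribet, \cite{BCGP} \S10, and the same residue-field analysis carried out in chapter 3), so no new lifting theorem is required, only careful bookkeeping of the descent data and coefficient fields.
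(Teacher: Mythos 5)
Your overall case split on $\rho \in \{18,19,20\}$ and your treatment of the $\rho = 18$ case match the paper's strategy, but there is a genuine gap in the $\rho = 18$ argument and your $\rho \ge 19$ arguments take a more labored route than necessary.

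For $\rho = 18$: your plan is to establish potential modularity of $\mathcal{R}_1$ and $\mathcal{R}_2$ \emph{separately} and then "pass to a further totally real Galois extension over which both $\mathcal{R}_i$ are modular." This is where the argument would stall. If $\mathcal{R}_1$ becomes modular over $F_1'$ and $\mathcal{R}_2$ over $F_2'$, you cannot simply work over the compositum: base change for $\GL_2$ automorphic representations over totally real fields is unconditionally available only for solvable extensions, so there is no guarantee either $\mathcal{R}_i$ remains modular over $F_1'F_2'$. Rerunning the potential modularity argument for one of them over the larger field only pushes the extension further, and you never synchronize. What one actually needs is \emph{simultaneous} potential modularity, achieved by running a single Moret-Bailly argument that controls both residual representations at once; this is precisely what the paper invokes via \cite[Theorem 7.1.10]{ACG}. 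With that in hand, the Ramakrishnan tensor functoriality step (and the cuspidality remark, which is correct because the monodromy Lie algebra $\mathfrak{sl}_2 \oplus \mathfrak{sl}_2$ rules out the two factors being twist-equivalent) goes through as you describe.

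For $\rho = 19$ and $\rho = 20$: your approach via Hecke characters and the Gelbart--Jacquet symmetric square lift is plausible but significantly more work than required, and the $\rho = 19$ branch still presupposes potential modularity of a $2$-dimensional representation with descent data, requiring the full lifting machinery again. The paper instead observes that for $\rho \ge 19$ the Hodge--Tate weights of $\mathcal{S}$ are distinct (namely $\{0,1,2\}$ and $\{0,2\}$), so $\mathcal{S}$ is a \emph{regular}, pure, essentially self-dual weakly compatible system, and potential automorphy is immediate from \cite{blggt} Theorem A with no further construction. Your route is closer to the paper's concluding remark about the \emph{stronger} claim of actual automorphy in these cases, but as a proof of the stated theorem it is unnecessarily circuitous, and for $\rho = 19$ you would still owe a careful argument that the putative two-dimensional lift exists over a suitable field and satisfies the modularity-lifting hypotheses.
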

\begin{proof} Suppose the Picard rank is $18$. Then it suffices to show that the compatible families $\mathcal{R}_1$ and $\mathcal{R}_2$
constructed in Theorem \ref{preptwo} are potentially modular, and then to use the automorphic of tensor products $\GL_2 \times \GL_2 \rightarrow \GL_4$ \cite{Ram}.
But the potential (simultaneous) modularity of $\mathcal{R}_1$ and $\mathcal{R}_2$ follows from \cite[Theorem 7.1.10]{ACG}.

If the Picard rank is $19$ or $20$, then $\mathcal{S}$ is regular and thus potentially automorphic by \cite{blggt} Theorem A.
\end{proof}

When the Picard rank is $19$ or $20$, it is easy to deduce the stronger claim that $\mathcal{S}$ is automorphic. In the rank $20$ case the
representation is orthogonal and thus induced. In the rank $19$ case, the compatible system of $\mathrm{GO}_3$ representations lifts as in the proof of Theorem \ref{prep}
to a $2$-dimensional family associated with a $\GL_2$-type abelian variety with descent to $F$, and hence the result
follows from \cite{Ribet} and the proof of Serre's conjecture \cite{SerreKW1,SerreKW2}.

\newpage
\bibliography{1.bib}

\end{document}